\newcommand\BibTeX{{\rmfamily B\kern-.05em \textsc{i\kern-.025em b}\kern-.08em
T\kern-.1667em\lower.7ex\hbox{E}\kern-.125emX}}
\newtheorem{theorem}{Theorem}[section]
\newtheorem{proposition}[theorem]{Proposition}
\newtheorem{lemma}[theorem]{Lemma}
\newtheorem{assumption}[theorem]{Assumption}
\newtheorem{definition}[theorem]{Definition}
\newtheorem{remark}[theorem]{Remark}
\newtheorem{example}[theorem]{Example}
\newtheorem*{lemmaaux}{Lemma~2.12}
\newtheorem*{propaux}{Proposition~2.29}
\newtheorem*{theoremaux}{Theorem~2.28}
\newcounter{count}
\begin{document}

\title{Synthesis of control Lyapunov functions and stabilizing feedback strategies using exit-time optimal control}

\author{Ivan Yegorov \thanks{Department of Mathematics, North Dakota State University,
			North Dakota, USA}
\and
        Peter M. Dower \thanks{Department of Electrical and Electronic Engineering, University of Melbourne, Victoria, Australia}
\and
        Lars Gr\"une \thanks{Chair of Applied Mathematics, University of Bayreuth,
			Germany}
}

\maketitle

%\corres{$^*$Ivan Yegorov (also known as Egorov). \email{ivanyegorov@gmail.com}}

\begin{abstract}
This paper studies the problem of constructing control Lyapunov functions (CLFs) and feedback stabilization strategies
for deterministic nonlinear control systems described by ordinary differential equations. Many numerical methods for
solving the Hamilton--Jacobi--Bellman partial differential equations specifying CLFs typically require dense state
space discretizations and consequently suffer from the curse of dimensionality. A relevant direction of attenuating
the curse of dimensionality concerns reducing the computation of the values of CLFs and associated feedbacks at any selected
states to finite-dimensional nonlinear programming problems. In this work, exit-time optimal control is used for that purpose.
First, we state an exit-time optimal control problem with respect to a sublevel set of an appropriate local CLF and
establish that, under a number of reasonable conditions, the concatenation of the corresponding value function and
the local CLF is a global CLF in the whole domain of asymptotic null-controllability. This leads to a curse-of-dimensionality-free
approach to feedback stabilization. We also investigate the formulated optimal control problem. A modification of these
constructions for the case when one does not find a suitable local CLF is provided as well. Supporting numerical simulation
results that illustrate our development are subsequently presented and discussed. Furthermore, it is pointed out that
the curse of complexity may cause significant issues in practical implementation even if the curse of dimensionality is
mitigated.
\end{abstract}

Keywords: {control Lyapunov functions, feedback stabilization, exit-time optimal control,
Hamil\-ton--Jacobi--Bellman equations, curse of dimensionality, curse of complexity, Pontryagin's principle,
method of characteristics, direct approximation techniques, model predictive control.}

%\jnlcitation{\cname{%
%\author{I.~Yegorov},
%\author{P.\,M.~Dower}, and
%\author{L.~Gr\"une}} (\cyear{2019}), 
%\ctitle{Synthesis of control Lyapunov functions and stabilizing feedback strategies using exit-time optimal control},
%\cjournal{Optim.~Contr.~Appl.~Met.}, \cvol{}.}

\section{Introduction}

In control theory and engineering, feedback stabilization methods for nonlinear dynamical systems are of both
theoretical and practical importance, and control Lyapunov functions (CLFs) constitute a fundamental tool
there~\cite{Isidori1995,Sepulchre1997,Nikitin1994,CamilliGruneWirth2008,GieslHafstein2015,Afanasiev1996,
Fantoni2002,Malisoff2009,ChoukchouBraham2014}. As was established in \cite{CamilliGruneWirth2008} for
a relatively wide subclass of deterministic control systems described by ordinary differential equations
(ODEs) without state constraints, the value functions of appropriate infinite-horizon optimal control
problems are CLFs and also the unique viscosity solutions of boundary value problems for the corresponding
Hamilton--Jacobi--Bellman (HJB) partial differential equations (PDEs) of first order. This is in fact
an extension of the classical Zubov method for finding Lyapunov functions \cite{Zubov1964} to problems of
weak asymptotic null-controllability. Moreover, the framework of \cite{CamilliGruneWirth2008} can be extended
to some state-constrained problems (see \cite[Example~5.2]{GruneZidani2015}).

Exact solutions of boundary value, initial value, and mixed problems for HJB equations are known only in very
special cases. Many broadly used numerical approaches to solving these problems, including
semi-Lagrangian schemes
\cite{BardiCapuzzoDolcetta2008,FalconeFerretti1998,Falcone2006,CristianiFalcone2007,ROCHJ2017},
finite-difference schemes
\cite{FlemingSoner2006,CrandallLions1984,OsherShu1991,JiangPeng2000,ZhangShu2003,BokanForcadelZidani2010,
BokanCristianiZidani2010,ROCHJ2017},
finite element methods
\cite{JensenSmears2013},
and level set methods
\cite{OsherSethian1988,Osher1993,Sethian1999,OsherFedkiw2003,MitchellBayenTomlin2005,MitchellLevelSetToolbox2012},
typically rely on dense state space discretizations. With the increase of the state space dimension,
the computational cost of such grid based techniques grows exponentially. Their practical implementation is
in general extremely difficult (even on supercomputers) if the state space dimension is greater than $ 3 $,
which leads to what R.~Bellman called the curse of dimensionality~\cite{Bellman1957,Bellman1961}. Possible
ways to attenuate the curse of dimensionality for various classes of HJB equations and also more general
Hamilton--Jacobi (HJ) equations, such as Hamilton--Jacobi--Isaacs (HJI) equations for zero-sum two-player
differential games, have therefore become an important research area. A number of related approaches have been
developed for particular classes of problems (see, e.\,g., the corresponding overview in
\cite[Introduction and Section~4]{YegorovDower2017}). It has to be emphasized that, even when the curse of
dimensionality is mitigated, the so-called curse of complexity may still cause significant issues in
numerical implementation~\cite{McEneaney2007,YegorovDower2017,KangWilcox2017}.

A relevant direction of attenuating the curse of dimensionality for certain classes of first-order HJ equations
is reducing the evaluation of their solutions at any selected states to finite-dimensional optimization
(nonlinear programming) problems~\cite{YegorovDower2017,DarbonOsher2016,ChowDarbonOsherYin2018,
ChowDarbonOsherYin2017,KangWilcox2017}. In contrast with the aforementioned grid based techniques,
this leads to the following advantages:
\begin{itemize}
\setlength\itemsep{0em}
\item  the solutions can be evaluated independently at different states, which allows for mitigating
the curse of dimensionality;
\item  since different states are separately treated, one can choose arbitrary bounded regions and grids for
computations and arrange parallelization;
\item  when obtaining the value functions, i.\,e., the solutions of HJB or HJI equations, at selected states
by solving the related finite-dimensional optimization problems, one can usually retrieve the corresponding control
actions as well, without requiring possibly unstable approximations of the partial derivatives of the value functions.
\end{itemize}
However, the curse of complexity still takes place if the considered nonlinear programming problems are
essentially multi-extremal or if one wants to construct global solution approximations in high-dimensional
regions.

The finite-dimensional optimization problems describing the values at arbitrary isolated states of
the solutions of first-order HJB equations in optimal control problems may build on the (generalized)
method of characteristics for such PDEs \cite{YegorovDower2017,ChowDarbonOsherYin2018,KangWilcox2017}
(related also to Pontryagin's principle~\cite{Pontryagin1964,Cesari1983,Geering2007}), or on so-called direct
approximation techniques~\cite{PattersonRao2014,GPOPS_II_User_guide,ICLOCS2,Becerra2010,PSOPT_Manual,
BonnansMartinon2017,Houska2011,ACADO_Manual,Benson2006,Garg2011}. The latter involve direct transcriptions (approximations) of
infinite-dimensional optimal open-loop control problems to finite-dimensional nonlinear programming problems
via discretizations in time applied to state and control variables, as well as to dynamical state equations.
In this context, the frameworks based on Pontryagin's principle and the method of characteristics are called
indirect. In comparison, the direct numerical approaches are in principle less precise and less justified
from a theoretical perspective, but often more robust with respect to initialization and more straightforward
to use.

For designing a curse-of-dimensionality-free approach to feedback stabilization in one of the ways discussed
above, it is crucial first to bridge the gap between the infinite-horizon Zubov type setting of
\cite{CamilliGruneWirth2008} and numerical optimization frameworks handling only finite terminal (exit) times.
To that end, one can impose an appropriate terminal condition leading to an exit-time optimal control problem.
Such a formulation is in particular involved in the work~\cite{MichalskaMayne1993} developing model predictive
control (MPC) schemes for stabilization, while some other MPC studies, such as
\cite{ChenAllgower1998,Fontes2001,Jadbabaie2005}, use terminal conditions with fixed horizon length.
In general, the works~\cite{MichalskaMayne1993,ChenAllgower1998,Fontes2001,Jadbabaie2005} adopt local asymptotic
controllability conditions and establish the existence of sufficiently small sampling times and sufficiently large
prediction horizons such that systems driven by the corresponding MPC algorithms become asymptotically stable for
given initial states.

In comparison, this paper establishes global characterizations of CLFs via exit-time optimal control, serving as
a theoretical basis for curse-of-dimensionality-free approaches to feedback stabilization. It in fact extends the results of
our conference papers~\cite{YegorovDowerGrune2018_1,YegorovDowerGrune2018_2} and provides detailed proofs, discussions, and
some practical developments.

This paper is organized as follows. In Section~2, we state an exit-time optimal control problem with respect to
a sublevel set of an appropriate local CLF similarly to \cite{MichalskaMayne1993}. It is then shown that, under a number of
reasonable conditions, the concatenation of the corresponding value function and the local CLF is a global CLF in the whole
domain of asymptotic null-controllability. We also investigate the formulated problem and derive a characteristics based
representation of the value function. Section~3 presents a modification of these constructions for the case when a suitable
local CLF is not found. Namely, the terminal set in the exit-time optimal control problem is taken as a sufficiently small
closed ball centered at the origin, the terminal cost is chosen as zero, and we in particular establish sufficient conditions
for the uniform convergence of the associated value function to the original CLF from the infinite-horizon setting on compact
subsets of the domain of asymptotic null-controllability as the radius of the target ball tends to zero. The results of Sections~2
and 3 form a theoretical foundation for curse-of-dimensionality-free approaches to feedback stabilization. Some related
computational aspects are discussed in Section~4, while further development of widely applicable numerical schemes and their
software implementation is left for future works. Supporting numerical simulation results are presented and discussed in
Section~5, and Section~6 contains concluding remarks. Possible issues related to the curse of complexity are pointed out
as well. The paper is also supported by \hyperlink{Online_App}
{an appendix} including some proofs and auxiliary
considerations.

The following notation is adopted throughout the paper:
\begin{itemize}
\setlength\itemsep{0em}
\item  given integer numbers~$ j_1 $ and $ j_2 \geqslant j_1 $, we write $ i = \overline{j_1, j_2} $ instead of
$ \: i \, = \, j_1, j_1 + 1, \ldots, j_2 $;
\item  the Minkowski sum of two sets~$ \Xi_1, \Xi_2 $ in some linear space is defined as
$$
\Xi_1 + \Xi_2 \:\: \stackrel{\mathrm{def}}{=} \:\: \{ \xi_1 + \xi_2 \: \colon \: \xi_1 \in \Xi_1, \:\: \xi_2 \in \Xi_2 \},
$$
and, if $ \, \Xi_1 = \{ \xi \} \, $ is singleton, we write $ \xi + \Xi_2 $ instead of $ \, \{ \xi \} + \Xi_2 $;
\item  given $ j \in \mathbb{N} $ and $ \Xi \subseteq \mathbb{R}^j $, the interior, closure, and boundary of
$ \Xi $ are denoted by $ \, \mathrm{int} \, \Xi $, $ \, \bar{\Xi}, \, $ and $ \, \partial \Xi, \, $ respectively;
\item  given $ j \in \mathbb{N} $, the origin in $ \mathbb{R}^j $ is written as $ 0_j $, $ \| \cdot \| $
is the Euclidean norm in $ \mathbb{R}^j $ (we avoid any confusions when considering the norms of vectors of
different dimensions together), the open Euclidean ball with center~$ \xi \in \mathbb{R}^j $ and
radius~$ r > 0 $ is denoted by $ \mathrm{B}_r(\xi) $, and its closure is $ \bar{\mathrm{B}}_r(\xi) $;
\item  given $ \: j_1, j_2 \in \mathbb{N}, \: $ the zero matrix of size $ j_1 \times j_2 $ is written as $ 0_{j_1 \times j_2} $, and
the $ j_1 \times j_1 $ identity matrix is $ I_{j_1 \times j_1} $;
\item  given $ j \in \mathbb{N} $, a vector~$ \xi \in \mathbb{R}^j $ and a nonempty set~$ \Xi \subseteq \mathbb{R}^j $,
the Euclidean distance from $ \xi $ to $ \Xi $ is denoted by $ \mathrm{dist} \, (\xi, \Xi) $;
\item  given $ \: j_1, j_2 \in \mathbb{N} $, $ \, \Xi_1 \subseteq \mathbb{R}^{j_1}, \, $ and
$ \, \Xi_2 \subseteq \mathbb{R}^{j_2}, \: $ the class of all essentially bounded functions
$ \, \varphi \colon \Xi_1 \to \Xi_2 \, $ is denoted by $ \, L^{\infty}(\Xi_1, \Xi_2), \, $ while
$ \, L^{\infty}_{\mathrm{loc}}(\Xi_1, \Xi_2) \, $ is the wider class of all locally essentially bounded functions
$ \, \varphi \colon \Xi_1 \to \Xi_2 $;
\item  given a function $ \, \varphi \colon \Xi_1 \to \mathbb{R}, \, $ the set of all its minimizers on
$ \Xi \subseteq \Xi_1 $ is denoted by $ \: \mathrm{Arg} \min_{\xi \, \in \, \Xi} \, \varphi(\xi), \: $ while the criterion
for the corresponding minimization problem is written as $ \: \varphi(\xi)  \, \longrightarrow \, \inf_{\xi \, \in \, \Xi} \: $
(or $ \: \varphi(\xi)  \, \longrightarrow \, \min_{\xi \, \in \, \Xi} \: $ if the minimum exists);
\item  $ \mathcal{K} $ is the class of all strictly increasing continuous
functions~$ \: \varphi \, \colon \, [0, +\infty) \to [0, +\infty) \: $ satisfying $ \varphi(0) = 0 $;
\item  $ \mathcal{K}_{\infty} $ is the class of all functions~$ \varphi(\cdot) \in \mathcal{K} $
satisfying $ \: \lim_{\rho \, \to \, +\infty} \, \varphi(\rho) \, = \, +\infty $;
\item  $ \mathcal{L} $ is the class of all nonincreasing continuous
functions~$ \: \varphi \, \colon \, [0, +\infty) \to [0, +\infty) \: $ for which
$ \: \lim_{\rho \, \to \, +\infty} \, \varphi(\rho) \, = \, 0 $;
\item  $ \mathcal{K} \mathcal{L} $ is the class of all continuous
functions~$ \: \varphi \, \colon \, [0, +\infty)^2 \to [0, +\infty) \: $ such that
$ \, \varphi(\cdot, \rho) \in \mathcal{K} \, $ and $ \, \varphi(\rho, \cdot) \in \mathcal{L} \, $ for every
$ \rho \geqslant 0 $;
\item  if a vector variable~$ \xi $ consists of some arguments of a map $ \: \varphi \, = \, \varphi(\ldots, \xi, \ldots), \: $
then $ \mathrm{D}_{\xi} \varphi $ denotes the standard (Fr\'echet) partial derivative of $ \varphi $ with respect to $ \xi $, and
$ \mathrm{D} \varphi $ is the standard derivative with respect to the vector of all arguments (the exact definitions of
the derivatives depend on the domain and range of $ \varphi $);
\item  given a real Hilbert space~$ X $, a nonempty set $ \Xi \subseteq X $ and a point~$ \xi \in \Xi $, the proximal
normal cone to $ \Xi $ at $ \xi $ is written as $ \mathrm{N}_{\mathrm{P}}(\xi; \Xi) $, and, if $ \Xi $ is closed,
$ \mathrm{N}(\xi; \Xi) $ denotes the normal cone to $ \Xi $ at $ \xi $, which is polar to the related tangent cone
(see, e.\,g., \cite[\S 1.1, \S 2.5]{ClarkeLedyaev1998});
\item  given $ \, j \in \mathbb{N} $, $ \, \Xi \subseteq \mathbb{R}^j $, $ \, \xi \, \in \, \mathrm{int} \, \Xi $,
$ \, \zeta \in \mathbb{R}^j \, $ and $ \, \varphi \colon \Xi \to \mathbb{R}, \, $ the lower Dini derivative
(or the directional subderivate) of $ \varphi $ at the point~$ \xi $ in the direction~$ \zeta $ is written as
$ \, \partial^- \varphi(\xi; \zeta), \, $ the directional subdifferential (that is, the set of all directional
subgradients) of $ \varphi $ at $ \xi $ is denoted by $ \mathrm{D}^- \varphi(\xi) $, and
$ \mathrm{D}^-_{\mathrm{P}} \varphi(\xi) $ is the proximal subdifferential (that is, the set of all proximal
subgradients) of $ \varphi $ at $ \xi $ (see, e.\,g., \cite[\S 0.1, \S 3.4]{ClarkeLedyaev1998}).
\end{itemize}

We also use the following definitions:
\begin{itemize}
\setlength\itemsep{0em}
\item  given $ j \in \mathbb{N} $ and a set~$ \Xi \subseteq \mathbb{R}^j $ containing the origin~$ 0_j $,
a function~$ \: \varphi \colon \: \Xi \: \to \: \mathbb{R} \, \cup \, \{ +\infty \} \: $ is called positive definite if
$ \varphi(0_j) = 0 $ and $ \varphi(\xi) > 0 $ for all $ \, \xi \, \in \, \Xi \setminus \{ 0_j \} $;
\item  given $ j \in \mathbb{N} $ and a set~$ \Xi \subseteq \mathbb{R}^j $, a function
$ \: \varphi \colon \: \Xi \: \to \: \mathbb{R} \, \cup \, \{ -\infty, +\infty \} \: $ is called proper
if the preimage $ \, \varphi^{-1} (M) \subseteq \Xi \, $ of any compact set~$ M \subset \mathbb{R} $ is
also compact.
\end{itemize}

\section{Global extension of a local CLF via exit-time optimal control}

\subsection{Problem statement and preliminary considerations}

Let the state and control variables be denoted by $ x \in \mathbb{R}^{n \times 1} $ and
$ u \in \mathbb{R}^{m \times 1} $, respectively. Consider the time-invariant system
\begin{equation}
\left\{ \begin{aligned}
& \dot{x}(t) \: = \: f(x(t), u(t)), \quad t \geqslant 0, \\
& x(0) \, = \, x_0 \, \in \, G, \\
& u(\cdot) \: \in \: \mathcal{U} \: \stackrel{\mathrm{def}}{=} \: L^{\infty}_{\mathrm{loc}}([0, +\infty), \, U).
\end{aligned} \right.  \label{Eq_1}
\end{equation}

\begin{assumption}  \label{Ass_1}
The following conditions concerning {\rm (\ref{Eq_1})} hold{\rm :}
\begin{list}{\rm \arabic{count})}%
{\usecounter{count}}
\setlength\itemsep{0em}
\item  $ U \subset \mathbb{R}^m $ is compact{\rm ,} $ G \subseteq \mathbb{R}^n $ and $ G_1 \subseteq \mathbb{R}^n $ are
open domains{\rm ,} $ \bar{G} \subset G_1 ${\rm ,} and $ 0_n \in G ${\rm ;}
\item  $ \: G_1 \times U \, \ni \, (x, u) \: \longmapsto \: f(x, u) \, \in \, \mathbb{R}^n \: $ is a continuous function{\rm ;}
\item  any state trajectory of {\rm (\ref{Eq_1})} defined on an interval~$ [0, T) $ with
$ \: T \, \in \, (0, +\infty) \cup \{ +\infty \} \: $ and corresponding to $ x_0 \in G $ and $ u(\cdot) \in \mathcal{U} $
stays inside $ G $ and does not reach the boundary~$ \partial G ${\rm ,} that is{\rm ,} $ G $ is a strongly invariant domain
in the state space {\rm (}see{\rm ,} e.\,g.{\rm , \cite[Chapter~4, \S 3]{ClarkeLedyaev1998}} and note that $ G = \mathbb{R}^n $
is a trivial case{\rm );}
\item  for any $ R > 0 ${\rm ,} there exists $ C_{1, R} > 0 $ satisfying
$$
\| f(x, u) \, - \, f(x', u) \| \:\, \leqslant \:\, C_{1, R} \, \| x - x' \| \quad
\forall \: x, x' \, \in \, \bar{\mathrm{B}}_R(0_n) \, \cap \, \bar{G} \quad \forall u \in U;
$$
\item  there exist a continuously differentiable proper function~$ \, Y \colon G_1 \to [0, +\infty) \, $ and
a constant~$ C_2 > 0 $ such that
$$
\sup_{u \, \in \, U} \, \left< \mathrm{D} Y(x), \, f(x, u) \right> \:\, \leqslant \:\, C_2 \, Y(x) \quad
\forall x \in G_1.
$$
\end{list}
\end{assumption}

\begin{remark}  \label{Rem_2}  \rm
For any $ x_0 \in G $ and $ u(\cdot) \in \mathcal{U} $, let
$$
\left[ 0, \, T_{\mathrm{ext}}(x_0, u(\cdot)) \right) \: \ni \: t \:\: \longmapsto \:\: x(t; \, x_0, u(\cdot)) \: \in \: G
$$
be a solution of the Cauchy problem~(\ref{Eq_1}) defined on the maximum extendability interval with the right
endpoint $ \: T_{\mathrm{ext}}(x_0, u(\cdot)) \: \in \: (0, +\infty) \, \cup \, \{ +\infty \} $. The local existence and
uniqueness of the solutions follow from Items~1--4 of Assumption~\ref{Ass_1}, while Item~5 is included in order to guarantee
their extendability to the whole time interval~$ [0, +\infty) $. For verifying these properties, it suffices to recall
basic results on Carath\'eodory ordinary differential equations~\cite[\S 1]{Filippov1988} and to note that Item~5 is
related to the forward completeness property~\cite{AngeliSontag1999} and implies the boundedness of the reachable set
$$
\{ x(t; \, x_0, u(\cdot)) \: \colon \:\: x_0 \in X_0, \:\:\: u(\cdot) \in \mathcal{U}, \:\:\:
t \:\, \in \:\, [ 0, \: \min \, \{ T_{\mathrm{ext}}(x_0, u(\cdot)), \, T \} ) \}
$$
for any finite time~$ T \in (0, +\infty) $ and any compact set~$ X_0 \subset G $ of initial states. For example, if
Items~1--3 of Assumption~\ref{Ass_1} hold and there exists a constant~$ C_1 > 0 $ satisfying
\begin{equation}
\| f(x, u) \, - \, f(x', u) \| \:\, \leqslant \:\, C_1 \, \| x - x' \| \quad
\forall \: x, x' \, \in \, G \quad \forall u \in U,  \label{Eq_2}
\end{equation}
then Item~5 is fulfilled with $ \: Y(x) \, = \, 1 + \| x \|^2 \: $ (while Item~4 is a trivial corollary to (\ref{Eq_2})). \qed
\end{remark}

Items~1 and 2 of Assumption~\ref{Ass_1} ensure the compactness of the sets $ \: \{ f(x, u) \, \colon \, u \in U \} \: $
for all $ x \in \bar{G} $. We also need their convexity.

\begin{assumption}  \label{Ass_3}
The set $ \: \{ f(x, u) \, \colon \, u \in U \} \: $ is convex for every $ x \in \bar{G} $.
\end{assumption}

Now recall two underlying definitions (see, e.\,g., \cite{CamilliGruneWirth2008,Clarke2000}).

\begin{definition}  \label{Def_4}
The global region of asymptotic null-controllability for the system~{\rm (\ref{Eq_1})} is given by
$$
\mathcal{D}_0 \:\: \stackrel{\mathrm{def}}{=} \:\: \left\{ x_0 \in G \: \colon \:
\mbox{\rm there exists $ u(\cdot) \in \mathcal{U} $ such that} \:\:\,
\lim\limits_{t \, \to \, +\infty} \, \| x(t; \, x_0, u(\cdot)) \| \: = \: 0 \right\}.
$$
\end{definition}

\begin{definition}  \label{Def_5}
A continuous{\rm ,} proper and positive definite function $ \: V \colon \, \mathcal{D}_0 \to [0, +\infty) \: $ is called
a {\rm (}global{\rm )} control Lyapunov function {\rm (}CLF{\rm )} for the system~{\rm (\ref{Eq_1})} in the region of
asymptotic null-controllability~$ \mathcal{D}_0 $ if there exists a continuous and positive definite function
$ \: W \colon \, \mathcal{D}_0 \to [0, +\infty) \: $ such that the following infinitesimal decrease condition
{\rm (}involving lower Dini derivatives{\rm )} holds{\rm :}
\begin{equation}
\inf_{u \, \in \, U} \: \partial^{-} V(x; \, f(x, u)) \:\, \leqslant \:\, -W(x) \quad \forall x \in \mathcal{D}_0.
\label{Eq_3}
\end{equation}
\end{definition}

\begin{remark}  \label{Rem_6}  \rm
Let Items~1,2 of Assumption~\ref{Ass_1} and Assumption~\ref{Ass_3} hold. Suppose that $ E \subseteq G $ is an open domain,
$ 0_n \in E $, and $ \: \Pi_i \colon E \to \mathbb{R} $, $ \, i = 1,2, \: $ are continuous and positive definite functions.
At a state~$ x \in E $, consider the infinitesimal decrease conditions
\begin{equation}
\inf_{u \, \in \, U} \: \partial^{-} \Pi_1(x; \, f(x, u)) \:\, \leqslant \:\, -\Pi_2(x),  \label{Eq_4}
\end{equation}
\begin{equation}
\max_{u \, \in \, U} \: \{ -\left< \zeta, \, f(x, u) \right> \} \:\, \geqslant \:\, \Pi_2(x)
\quad \forall \, \zeta \, \in \, \mathrm{D}_{\mathrm{P}}^- \Pi_1(x),  \label{Eq_5}
\end{equation}
\begin{equation}
\max_{u \, \in \, U} \: \{ -\left< \zeta, \, f(x, u) \right> \} \:\, \geqslant \:\, \Pi_2(x)
\quad \forall \, \zeta \, \in \, \mathrm{D}^- \Pi_1(x)  \label{Eq_6}
\end{equation}
in the Dini, proximal and viscosity forms, respectively. If (\ref{Eq_4}) holds at a state~$ x \in E $, then
(\ref{Eq_5}) and (\ref{Eq_6}) also hold at this state (see \cite[pp.~136,~138]{ClarkeLedyaev1998}). Furthermore,
the following three statements are equivalent (see \cite[p.~27]{Clarke2000}, \cite[Chapter~3, Theorem~4.2]{ClarkeLedyaev1998}
and \cite[Theorem~9.2]{Clarke1995}):
(i) (\ref{Eq_4}) holds for all $ x \in E $;
(ii) (\ref{Eq_5}) holds for all $ x \in E $;
(iii) (\ref{Eq_6}) holds for all $ x \in E $.
Thus, the Dini, proximal and viscosity decrease conditions lead to equivalent definitions of a CLF. \qed
\end{remark}

The next assumption plays a significant role and states the existence of a function that locally satisfies
the CLF conditions and some other technical properties.

\begin{assumption}  \label{Ass_7}
The following conditions hold{\rm :}
\begin{list}{\rm \arabic{count})}%
{\usecounter{count}}
\setlength\itemsep{0em}
\item  $ \Omega \subseteq G $ is an open domain{\rm ,} and $ 0_n \in \Omega ${\rm ;}
\item  $ V_{\mathrm{loc}} \colon \, \bar{\Omega} \to [0, +\infty) \: $ is a continuous{\rm ,} proper and
positive definite function{\rm ,} whose restriction to $ \Omega $ satisfies the infinitesimal decrease condition
\begin{equation}
\inf_{u \, \in \, U} \: \partial^{-} V_{\mathrm{loc}}(x; \, f(x, u)) \:\, \leqslant \:\,
-W_{\mathrm{loc}}(x) \quad \forall x \in \Omega  \label{Eq_7}
\end{equation}
with some continuous and positive definite function $ \: W_{\mathrm{loc}} \colon \, \Omega \to [0, +\infty) ${\rm ;}
\item  $ V_{\mathrm{loc}}(\cdot) $ is locally Lipschitz continuous in $ \Omega $ {\rm (}and hence
Lipschitz continuous on any compact subset of $ \Omega $ {\rm \cite[Theorem~1.14]{Markley2004});}
\item  there exist positive constants~$ c $ and $ C_3 $ such that the set
$ \: \left\{ x \in \bar{\Omega} \, \colon \, V_{\mathrm{loc}}(x) < c \right\} \: $ is an open domain in
$ \mathbb{R}^n ${\rm ,} whose closure coincides with the set
\begin{equation}
\Omega_c \:\, \stackrel{\mathrm{def}}{=} \:\, \left\{ x \in \bar{\Omega} \, \colon \,
V_{\mathrm{loc}}(x) \leqslant c \right\}  \label{Eq_8}
\end{equation}
and fulfills the inclusion
\begin{equation}
\Omega_c \, + \, \mathrm{B}_{C_3}(0_n) \: \subseteq \: \Omega,  \label{Eq_9}
\end{equation}
while the boundary~$ \partial \Omega_c $ coincides with
\begin{equation}
l_c \:\, \stackrel{\mathrm{def}}{=} \:\, \left\{ x \in \bar{\Omega} \, \colon \,
V_{\mathrm{loc}}(x) = c \right\}  \label{Eq_10}
\end{equation}
and is a connected piecewise regular hypersurface in $ \mathbb{R}^n ${\rm ;}
\item  $ \lim_{\varepsilon \, \to \, +0} \: \sup \, \left\{ \| x \| \: \colon \: x \in \bar{\Omega}, \:\:
V_{\mathrm{loc}}(x) \leqslant \varepsilon \right\} \:\, = \:\, 0 $.
\end{list}
\end{assumption}

\begin{remark}  \label{Rem_8}  \rm
Due to Remark~\ref{Rem_6}, the condition~(\ref{Eq_7}) in Item~2 of Assumption~\ref{Ass_7} can also be
written in the proximal and viscosity forms. \qed
\end{remark}

\begin{remark}  \label{Rem_9}  \rm
Let Assumptions~\ref{Ass_1} and \ref{Ass_7} hold. Since $ V_{\mathrm{loc}}(\cdot) $ is a proper function,
$ \Omega_c $ is a compact set. If Assumption~\ref{Ass_3} also holds, then, with the help of
\cite[Theorem~2.1 and Remark~2.1]{Clarke2000}, one can establish the inclusion $ \Omega_c \subseteq \mathcal{D}_0 $,
which yields local asymptotic null-controllability for the system~(\ref{Eq_1}). \qed
\end{remark}

\begin{remark}  \label{Rem_10}  \rm
Let Assumption~\ref{Ass_1} and Items~1,2 of Assumption~\ref{Ass_7} hold. It is easy to verify that a sufficient condition for
Item~5 of Assumption~\ref{Ass_7} is the existence of a function~$ \alpha(\cdot) \in \mathcal{K} $ satisfying
$ \, \alpha(\| x \|) \leqslant V_{\mathrm{loc}}(x) \, $ for all $ x \in \bar{\Omega} $. \qed
\end{remark}

The following proposition indicates that, under the adopted assumptions, the right-hand side of the system~(\ref{Eq_1})
satisfies the Petrov condition on $ l_c = \partial \Omega_c $ in the sense of \cite[Definition~8.2.2]{Cannarsa2004}.
This condition strengthens the property that, at any state~$ x \in l_c $, there exists a velocity of (\ref{Eq_1})
pointing strictly inside $ \Omega_c $.

\begin{proposition}  \label{Pro_11}
Let Assumptions~{\rm \ref{Ass_1}, \ref{Ass_3}} and {\rm \ref{Ass_7}} hold. There exists a constant~$ C_4 > 0 $ satisfying
\begin{equation}
\min_{u \, \in \, U} \: \left< \nu, \, f(x, u) \right> \: \leqslant \: -C_4 \quad
\forall \: \nu \: \in \: \{ \nu' \, \in \, \mathrm{N}_{\mathrm{P}}(x; \Omega_c) \: \colon \: \| \nu' \| = 1 \} \quad
\forall x \in l_c,  \label{Eq_11}
\end{equation}
that is{\rm ,} the Petrov condition holds for the right-hand side of {\rm (\ref{Eq_1})} on $ l_c $.
\end{proposition}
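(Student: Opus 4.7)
The plan is to extract the Petrov condition directly from the Dini form of the CLF decrease in Item~2 of Assumption~\ref{Ass_7}, using the local Lipschitz property of $V_{\mathrm{loc}}$ (Item~3) to convert a strict one-point decrease of $V_{\mathrm{loc}}$ into strict interiority of a whole ball in $\Omega_c$, and then feed that ball into the defining inequality of a proximal normal. Set up uniform constants first. Since $l_c = \partial\Omega_c$ is compact, $W_{\mathrm{loc}}$ is continuous and positive definite, and $0_n \notin l_c$ (because $V_{\mathrm{loc}}(0_n)=0<c$), the value $W_0 := \min_{y \in l_c} W_{\mathrm{loc}}(y)$ is strictly positive. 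Next, using the local Lipschitz continuity of $V_{\mathrm{loc}}$ on $\Omega$ together with the inclusion (\ref{Eq_9}) and a Lebesgue-number argument applied to a finite cover of $l_c$ by balls contained in $\Omega$ on which $V_{\mathrm{loc}}$ is Lipschitz, choose $L > 0$ and $\rho > 0$ such that $V_{\mathrm{loc}}$ is $L$-Lipschitz on $\mathrm{B}_\rho(x)$ for every $x \in l_c$.

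Now fix arbitrary $x \in l_c$ and a unit $\nu \in \mathrm{N}_{\mathrm{P}}(x;\Omega_c)$, and pick $\sigma \geq 0$ realizing the proximal normal inequality $\langle \nu, y-x\rangle \leq \sigma\|y-x\|^2$ for all $y\in\Omega_c$. By the Dini CLF decrease in Item~2 of Assumption~\ref{Ass_7}, one can select $\bar u \in U$ satisfying
\begin{equation*}
\partial^{-} V_{\mathrm{loc}}(x;\, f(x,\bar u)) \,\leq\, -W_{\mathrm{loc}}(x) + W_0/4 \,\leq\, -3W_0/4.
\end{equation*}
By the liminf definition of the lower Dini derivative, extract a sequence $t_k \downarrow 0^+$ along which $V_{\mathrm{loc}}(x + t_k f(x,\bar u)) \leq c - t_k\,W_0/2$ and (by continuity of $f$) $x + t_k f(x,\bar u) \in \mathrm{B}_\rho(x)$. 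Set $r_k := t_k\,W_0/(4L)$. The $L$-Lipschitz bound on $\mathrm{B}_\rho(x)$ then shows that, for $t_k$ small enough, every $z \in \bar{\mathrm{B}}_{r_k}(x + t_k f(x,\bar u)) \subset \mathrm{B}_\rho(x)$ satisfies $V_{\mathrm{loc}}(z) \leq c - t_k\,W_0/4 < c$, so the whole ball $\bar{\mathrm{B}}_{r_k}(x + t_k f(x,\bar u))$ lies in $\Omega_c$.

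Finally, apply the proximal normal inequality to the ``worst'' test point $z_k := x + t_k f(x,\bar u) + r_k\,\nu \in \Omega_c$:
\begin{equation*}
t_k\,\langle \nu, f(x,\bar u)\rangle + r_k \,\leq\, \sigma\,\bigl(t_k\|f(x,\bar u)\| + r_k\bigr)^2.
\end{equation*}
Dividing by $t_k$ and sending $t_k \to 0^+$ yields $\langle \nu, f(x,\bar u)\rangle \leq -W_0/(4L)$, so the choice $C_4 := W_0/(4L)$, which depends only on $W_0$ and $L$ and is therefore uniform in $x \in l_c$ and unit $\nu \in \mathrm{N}_{\mathrm{P}}(x;\Omega_c)$, gives the Petrov bound~(\ref{Eq_11}). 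The main obstacle in this plan is exactly the strict-interiority propagation: testing the proximal normal inequality on the single moving point $x + t_k f(x,\bar u)$ would, after dividing by $t_k$ and passing to the limit, yield only $\langle \nu, f(x,\bar u)\rangle \leq 0$, which is too weak; it is the linear-in-$t_k$ radius $r_k$, furnished by the uniform Lipschitz constant $L$ from Item~3 of Assumption~\ref{Ass_7}, that upgrades this to the strictly negative, uniform bound required by the Petrov condition.
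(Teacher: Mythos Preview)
Your argument is correct and takes a genuinely different route from the paper. The paper does not work with the proximal normal inequality directly; instead it invokes a nonsmooth analysis result (Lemma~\ref{Thm_A_2}, from \cite{Schirotzek2007}) to the effect that every unit proximal normal~$\nu$ to the sublevel set~$\Omega_c$ at~$x$ can be approximated by $\zeta'/\|\zeta'\|$ for some proximal subgradient~$\zeta' \in \mathrm{D}^-_{\mathrm{P}} V_{\mathrm{loc}}(x')$ at a nearby point~$x'$. It then applies the proximal form of the CLF decrease~(\ref{Eq_13}) to~$\zeta'$, combined with the Lipschitz bound~$\|\zeta'\|\leqslant\eta_5$ from Lemma~\ref{Thm_A_1}, to obtain $\min_{u}\langle\nu,f(x,u)\rangle\leqslant -\eta_3/\eta_5$.

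Your approach bypasses the Schirotzek lemma entirely: rather than translating proximal normals into proximal subgradients, you stay on the geometric side and feed into the proximal normal inequality a test point inside~$\Omega_c$ that is displaced by~$r_k\nu$. The key device---using the uniform Lipschitz constant~$L$ to promote the one-dimensional linear decrease of~$V_{\mathrm{loc}}$ along $t\mapsto x+t f(x,\bar u)$ into a ball of radius linear in~$t$ that sits strictly inside~$\Omega_c$---is exactly what makes the limit nontrivial, as you note. The resulting constant $C_4 = W_0/(4L)$ has the same structure as the paper's~$\eta_3/\eta_5$ (decrease rate over Lipschitz constant). Your proof is more elementary and self-contained; the paper's route is more systematic in that it passes through the general machinery relating normal cones of sublevel sets to subdifferentials of the defining function, which may be of independent interest but is heavier than needed here.

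Two minor cosmetic points: the Lebesgue-number argument is not strictly needed, since $\overline{l_c+\mathrm{B}_\rho(0_n)}$ is a compact subset of~$\Omega$ for any $\rho<C_3$ by~(\ref{Eq_9}), and $V_{\mathrm{loc}}$ is Lipschitz on compact subsets of~$\Omega$ by Item~3 of Assumption~\ref{Ass_7}; and the simplified liminf formula for $\partial^-V_{\mathrm{loc}}$ that you use is justified by the local Lipschitz continuity (cf.\ (\ref{Eq_28}) in the paper).
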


The proof of Proposition~\ref{Pro_11} requires two auxiliary results from nonsmooth analysis. The proof of the first of them
(Lemma~\ref{Thm_A_1}) is rather straightforward and given in Subsection~A.1.1 of \hyperlink{Online_App}
{the appendix},
while the proof of the second result (Lemma~\ref{Thm_A_2}) is essentially more difficult and can be found in \cite{Schirotzek2007}.

\begin{lemma}  \label{Thm_A_1}
If $ E \subseteq \mathbb{R}^n $ is an open set and a function $ \: \varphi \, \colon \, E \to \mathbb{R} \: $
is Lipschitz continuous with constant~$ C > 0 ${\rm ,} then
$$
\| \zeta \| \, \leqslant \, C \sqrt{n} \quad \forall \, \zeta \, \in \, \mathrm{D}^-_{\mathrm{P}} \varphi(x) \quad
\forall x \in E.
$$
\end{lemma}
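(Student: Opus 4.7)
The plan is to test the proximal subgradient inequality against the $2n$ coordinate directions and extract a uniform bound on each component of $\zeta$.

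Fix $x \in E$ and $\zeta = (\zeta_1, \ldots, \zeta_n) \in \mathrm{D}^-_{\mathrm{P}} \varphi(x)$. By the very definition of the proximal subdifferential (see, e.g., \cite[\S 3.4]{ClarkeLedyaev1998}), there exist constants $\sigma > 0$ and $\eta > 0$ such that $\bar{\mathrm{B}}_\eta(x) \subseteq E$ and
$$
\varphi(y) \:\geqslant\: \varphi(x) \,+\, \langle \zeta, \, y - x \rangle \,-\, \sigma \, \| y - x \|^2
\quad \forall \, y \in \mathrm{B}_\eta(x).
$$
This is the only hypothesis on $\zeta$ I would use.

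For each index $i \in \{1, \ldots, n\}$ and each sign $s \in \{+1, -1\}$, I would substitute $y = x + s t e_i$ with $e_i$ the $i$-th standard basis vector and $t \in (0, \eta)$, obtaining
$$
\varphi(x + s t e_i) \,-\, \varphi(x) \:\geqslant\: s \, t \, \zeta_i \,-\, \sigma \, t^2.
$$
Combining this with the Lipschitz estimate $|\varphi(x + s t e_i) - \varphi(x)| \leqslant C \, \| s t e_i \| = C \, t$ yields
$$
s \, t \, \zeta_i \,-\, \sigma \, t^2 \:\leqslant\: C \, t,
$$
so that $s \, \zeta_i \leqslant C + \sigma \, t$. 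Letting $t \to +0$ and taking both signs $s = \pm 1$ gives $|\zeta_i| \leqslant C$. Squaring and summing over $i = \overline{1,n}$ then produces $\| \zeta \|^2 \leqslant n \, C^2$, which is exactly the claimed inequality $\| \zeta \| \leqslant C \sqrt{n}$.

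There is no real obstacle here; the only subtle point is to remember that $\sigma$ and $\eta$ in the proximal inequality are allowed to depend on $x$ and $\zeta$, but since the whole argument is carried out at a fixed $x$ and the bound is obtained by sending $t \to +0$, this dependence does not affect the final estimate. I would also remark that a slightly sharper bound $\| \zeta \| \leqslant C$ can be obtained by testing along the single direction $y = x + t \zeta / \| \zeta \|$ (assuming $\zeta \neq 0$), but the coordinate-wise version stated in the lemma is entirely sufficient for the subsequent use in the proof of Proposition~\ref{Pro_11}.
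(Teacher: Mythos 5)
Your proof is correct and follows essentially the same route as the paper's: test the proximal subgradient inequality against $x' = x \pm \varepsilon e_i$, combine with the Lipschitz bound, let $\varepsilon \to +0$ to get $|\zeta_i| \leqslant C$, and sum over coordinates. Your side remark that testing along $\zeta/\|\zeta\|$ yields the sharper bound $\|\zeta\| \leqslant C$ is also valid, though, as you note, the coordinate-wise estimate suffices for the application in Proposition~2.11.
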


\begin{lemma}{\rm \cite[Theorem~11.6.3]{Schirotzek2007}}  \label{Thm_A_2}
Assume that $ X $ is a real Hilbert space{\rm ,}
$ \: \varphi \colon \: X \: \to \: \mathbb{R} \, \cup \, \{ -\infty, +\infty \} \: $ is a proper and lower semicontinuous
function{\rm ,} $ \: M \: \stackrel{\mathrm{def}}{=} \: \{ \xi \in X \, \colon \, \varphi(\xi) \leqslant 0 \} ${\rm ,}
$ \: x \in M ${\rm ,} and $ \, \nu \in \mathrm{N}_{\mathrm{P}}(x; M) $. Then at least one of the following two
properties holds{\rm :}
\begin{list}{\rm \arabic{count})}%
{\usecounter{count}}
\setlength\itemsep{0em}
\item  for any $ \varepsilon > 0 ${\rm ,} there exist $ x' \in X $ and
$ \, \zeta' \in \mathrm{D}^-_{\mathrm{P}} \varphi(x') \, $ such that
$$
\| x' - x \| \: < \: \varepsilon, \quad \| \varphi(x') \, - \, \varphi(x) \| \: < \: \varepsilon, \quad
\| \zeta' \| \, < \, \varepsilon;
$$
\item  for any $ \varepsilon > 0 ${\rm ,} there exist $ x' \in X ${\rm ,}
$ \, \zeta' \in \mathrm{D}^-_{\mathrm{P}} \varphi(x') \, $ and $ \lambda > 0 $ such that
$$
\| x' - x \| \: < \: \varepsilon, \quad \| \varphi(x') \, - \, \varphi(x) \| \: < \: \varepsilon, \quad
\| \nu \, - \, \lambda \zeta' \| \: < \: \varepsilon.
$$
\end{list}
\end{lemma}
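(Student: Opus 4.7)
The plan is to apply Schirotzek's dichotomy (Lemma~2.13) to $\varphi \stackrel{\mathrm{def}}{=} V_{\mathrm{loc}} - c$, extended by $+\infty$ outside $\bar{\Omega}$, so that the sublevel set $\{\varphi \leqslant 0\}$ coincides with $\Omega_c$, and then extract a uniform Petrov constant from the proximal form of the infinitesimal decrease condition (\ref{Eq_7}). First I fix quantitative ingredients. Using Item~3 of Assumption~\ref{Ass_7} together with the inclusion (\ref{Eq_9}), $V_{\mathrm{loc}}$ is Lipschitz with some constant $L > 0$ on a closed ``collar'' neighborhood $N \subset \Omega$ of $l_c$ of positive width (for instance $N = \{x \in \bar{\Omega} : c/2 \leqslant V_{\mathrm{loc}}(x) \leqslant 3c/2\}$, possibly shrunk to lie inside $\Omega$). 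By Item~5 of Assumption~\ref{Ass_7}, $N$ is bounded away from $0_n$, so the continuous positive definite function $W_{\mathrm{loc}}$ admits a uniform positive lower bound $w_0 > 0$ on $N$. By Item~2 of Assumption~\ref{Ass_1} and compactness, $\|f(x,u)\|$ is bounded by some $M_0 > 0$ on $N \times U$, and Item~4 of Assumption~\ref{Ass_1} provides a Lipschitz constant $C_{1,R}$ of $f(\cdot,u)$ on a ball containing $N$.

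Next fix $x \in l_c$ and a unit proximal normal $\nu \in \mathrm{N}_{\mathrm{P}}(x; \Omega_c)$, and apply Lemma~\ref{Thm_A_2} with $X = \mathbb{R}^n$ and the above $\varphi$. Alternative~1 would produce, for arbitrarily small $\varepsilon > 0$, a point $x' \in N$ and a subgradient $\zeta' \in \mathrm{D}^-_{\mathrm{P}} V_{\mathrm{loc}}(x')$ with $\|\zeta'\| < \varepsilon$ (for small $\varepsilon$ we have $x' \in \Omega$, so the proximal subgradients of the extended $\varphi$ agree with those of $V_{\mathrm{loc}}$ at $x'$). The proximal form of (\ref{Eq_7}) (Remark~\ref{Rem_8}) would then give
\[
w_0 \,\leqslant\, W_{\mathrm{loc}}(x') \,\leqslant\, \max_{u \in U} \{-\langle \zeta', f(x',u)\rangle\} \,\leqslant\, \varepsilon M_0,
\]
a contradiction once $\varepsilon < w_0 / M_0$. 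Therefore alternative~2 must hold.

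Consequently, for every $\varepsilon > 0$ there exist $x' \in N$, $\zeta' \in \mathrm{D}^-_{\mathrm{P}} V_{\mathrm{loc}}(x')$, and $\lambda > 0$ with $\|x'-x\| < \varepsilon$, $|V_{\mathrm{loc}}(x')-c| < \varepsilon$ and $\|\nu - \lambda\zeta'\| < \varepsilon$. Lemma~\ref{Thm_A_1} together with the Lipschitz constant $L$ on $N$ yields $\|\zeta'\| \leqslant L\sqrt{n}$, while $\|\nu\|=1$ and the triangle inequality force $\lambda\|\zeta'\| \geqslant 1-\varepsilon$, so $\lambda \geqslant (1-\varepsilon)/(L\sqrt{n})$. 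The proximal decrease at $x'$ then gives
\[
\min_{u \in U} \langle \lambda\zeta',\, f(x',u)\rangle \,\leqslant\, -\lambda\,W_{\mathrm{loc}}(x') \,\leqslant\, -\frac{(1-\varepsilon)\,w_0}{L\sqrt{n}}.
\]

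The remaining step is to transfer this bound from $(\lambda\zeta', x')$ back to $(\nu, x)$. Using $\|\nu - \lambda\zeta'\| < \varepsilon$, $\|x-x'\| < \varepsilon$, the bound $M_0$ and the local Lipschitz constant $C_{1,R}$, a routine estimate gives
\[
\bigl| \min_{u} \langle \nu, f(x,u)\rangle \,-\, \min_{u} \langle \lambda\zeta', f(x',u)\rangle \bigr| \,\leqslant\, \varepsilon\,M_0 \,+\, L\sqrt{n}\,C_{1,R}\,\varepsilon.
\]
Letting $\varepsilon \to 0$ yields $\min_{u \in U} \langle \nu, f(x,u)\rangle \leqslant -w_0/(L\sqrt{n})$; since the right-hand side is independent of $x \in l_c$ and of the unit $\nu$, the constant $C_4 \stackrel{\mathrm{def}}{=} w_0/(L\sqrt{n})$ satisfies (\ref{Eq_11}). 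The main technical obstacle I anticipate is the careful localization: choosing the collar $N$ small enough so that simultaneously $V_{\mathrm{loc}}$ is Lipschitz on it, $N$ is bounded away from $0_n$, and $N$ lies in the interior of $\bar{\Omega}$, so that Lemma~\ref{Thm_A_2} applied to the extended $\varphi$ really delivers information about proximal subgradients of $V_{\mathrm{loc}}$ itself near $x$.
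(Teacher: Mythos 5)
Your proposal does not prove the statement it was asked to prove. The statement in question is Lemma~\ref{Thm_A_2} itself --- Schirotzek's dichotomy characterizing proximal normals to the sublevel set $M = \{\xi \colon \varphi(\xi) \leqslant 0\}$ in terms of proximal subgradients of $\varphi$ at nearby points. Your argument opens with ``apply Schirotzek's dichotomy (Lemma~2.13) to $\varphi = V_{\mathrm{loc}} - c$'' and later invokes ``apply Lemma~\ref{Thm_A_2} with $X = \mathbb{R}^n$''; that is, you take the statement as a given tool and use it to derive a downstream consequence, namely the Petrov condition of Proposition~\ref{Pro_11}. What you have written is essentially a (reasonable, quantitatively explicit) proof of Proposition~\ref{Pro_11}, not of Lemma~\ref{Thm_A_2}. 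As a proof of the lemma it is circular; nothing in your text addresses why, for a general proper lower semicontinuous $\varphi$ on a real Hilbert space and a proximal normal $\nu$ to its zero-sublevel set, one of the two alternatives must hold.

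For the record, the paper itself does not prove this lemma either: it states explicitly that the proof ``is essentially more difficult and can be found in \cite{Schirotzek2007}'' (Theorem~11.6.3 there). A genuine proof would have to start from the definition of $\nu \in \mathrm{N}_{\mathrm{P}}(x; M)$ --- that $x$ minimizes $\|\xi - (x + t\nu)\|^2$ over $\xi \in M$ for small $t > 0$ --- replace the hard constraint $\varphi(\xi) \leqslant 0$ by a penalization, and run a variational principle (Ekeland or Borwein--Preiss) to produce the nearby points $x'$ and proximal subgradients $\zeta'$ realizing one of the two alternatives; none of this machinery appears in your proposal. If your intent was to prove Proposition~\ref{Pro_11}, your argument is close in spirit to the paper's (same localization to a collar of $l_c$, same use of Lemmas~\ref{Thm_A_1} and \ref{Thm_A_2}, same normalization of $\lambda\zeta'$ and passage to the limit $\varepsilon \to 0$), but that is not the task you were set.
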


\begin{proof}[Proof of Proposition~{\rm \ref{Pro_11}}]
Since $ l_c $ is compact and $ V_{\mathrm{loc}}(\cdot) $, $ W_{\mathrm{loc}}(\cdot) $ are continuous and positive
definite, there exist constants $ \: \eta_1 > 0 $, $ \, \eta_2 \in (0, C_3) $, $ \, \eta_3 > 0 \: $ such that
$ \: \mathrm{B}_{\eta_1}(0_n) \, \subseteq \, \mathrm{int} \, \Omega_c \: $ and
\begin{equation}
W_{\mathrm{loc}}(x) \, \geqslant \, \eta_3 \quad \forall \: x \: \in \: l_c \, + \, \mathrm{B}_{\eta_2}(0_n).
\label{Eq_12}
\end{equation}
In line with Remark~\ref{Rem_8}, the infinitesimal decrease condition on $ V_{\mathrm{loc}}(\cdot) $ can be written
in the proximal form:
\begin{equation}
\min_{u \, \in \, U} \: \left< \zeta, \, f(x, u) \right> \:\, \leqslant \:\, -W_{\mathrm{loc}}(x) \quad
\forall \zeta \, \in \, \mathrm{D}_{\mathrm{P}}^- V_{\mathrm{loc}}(x) \quad \forall x \in \Omega.  \label{Eq_13}
\end{equation}
From the relations~(\ref{Eq_12}), (\ref{Eq_13}), (\ref{Eq_9}) and $ \eta_2 \in (0, C_3) $, one obtains
\begin{equation}
\min_{u \, \in \, U} \: \left< \zeta, \, f(x, u) \right> \:\, \leqslant \:\, -\eta_3 \quad
\forall \zeta \, \in \, \mathrm{D}_{\mathrm{P}}^- V_{\mathrm{loc}}(x) \quad
\forall \: x \: \in \: l_c \, + \, \mathrm{B}_{\eta_2}(0_n).  \label{Eq_14}
\end{equation}
The property~(\ref{Eq_14}), continuity of $ f(\cdot, \cdot) $, and compactness of $ U $ and
$ \, l_c \, + \, \bar{\mathrm{B}}_{\eta_2}(0_n) \, $ yield the existence of a constant~$ \eta_4 > 0 $ satisfying
\begin{equation}
\| \zeta \| \, \geqslant \, \eta_4 \quad \forall \zeta \, \in \, \mathrm{D}_{\mathrm{P}}^- V_{\mathrm{loc}}(x) \quad
\forall \: x \: \in \: l_c \, + \, \mathrm{B}_{\eta_2}(0_n).  \label{Eq_15}
\end{equation}
Moreover, the Lipschitz continuity of $ V_{\mathrm{loc}}(\cdot) $ on compact subsets of $ \Omega $ and Lemma~\ref{Thm_A_1}
guarantee the existence of a constant~$ \eta_5 > 0 $ such that
\begin{equation}
\| \zeta \| \, \leqslant \, \eta_5 \quad \forall \zeta \, \in \, \mathrm{D}_{\mathrm{P}}^- V_{\mathrm{loc}}(x) \quad
\forall \: x \: \in \: l_c \, + \, \mathrm{B}_{\eta_2}(0_n).  \label{Eq_16}
\end{equation}

Now let us apply Lemma~\ref{Thm_A_2} to the zero sublevel set~$ \Omega_c $ of the proper and lower semicontinuous
function that equals $ \, V_{\mathrm{loc}}(x) - c \, $ for $ x \in \bar{\Omega} $ and $ +\infty $ for
$ \, x \, \in \, \mathbb{R}^n \setminus \bar{\Omega} $.

Take $ x \in l_c $ and $ \, \nu \in \mathrm{N}_{\mathrm{P}}(x; \Omega_c) \, $ with $ \| \nu \| = 1 $.

By virtue of (\ref{Eq_15}), Item~1 of Lemma~\ref{Thm_A_2} does not hold in the considered situation. Then Item~2 of
Lemma~\ref{Thm_A_2} holds and implies that, for any $ \varepsilon > 0 $, there exist $ \, x' \in \mathrm{B}_{\varepsilon}(x) $,
$ \, \zeta' \in \mathrm{D}_{\mathrm{P}}^- V_{\mathrm{loc}}(x') \, $ and $ \lambda > 0 $ satisfying
\begin{equation}
\left\| \nu \, - \, \lambda \zeta' \right\| \: < \: \varepsilon.  \label{Eq_17}
\end{equation}
By assuming $ \varepsilon \in (0, 1) $ without loss of generality, and by using (\ref{Eq_17}) with $ \| \nu \| = 1 $, it is
easy to derive $ \: \left| \lambda \, \| \zeta' \| \, - \, 1 \right| \: < \: \varepsilon $, $ \: \| \zeta' \| > 0, \: $ and
therefore
\begin{equation}
\left\| \lambda \zeta' \, - \, \frac{\zeta'}{\| \zeta' \|} \right\| \:\, = \:\,
\left| \lambda \, - \, \frac{1}{\| \zeta' \|} \right| \: \left\| \zeta' \right\| \:\, = \:\,
\left| \lambda \, \| \zeta' \| \, - \, 1 \right| \:\, < \:\, \varepsilon.  \label{Eq_18}
\end{equation}
The inequalities (\ref{Eq_17}) and (\ref{Eq_18}) lead to
\begin{equation}
\left\| \nu \, - \, \frac{\zeta'}{\| \zeta' \|} \right\| \: < \: 2 \varepsilon.  \label{Eq_19}
\end{equation}

Thus, for any $ \varepsilon > 0 $, there exist $ \, x' \in \mathrm{B}_{\varepsilon}(x) \, $ and
$ \, \zeta' \in \mathrm{D}_{\mathrm{P}}^- V_{\mathrm{loc}}(x') \, $ such that (\ref{Eq_19}) holds. Together with
the relations~(\ref{Eq_14}), (\ref{Eq_16}) and continuity of the function
$ \: \mathbb{R}^n \times G \, \ni \, (\xi_1, \xi_2) \,\, \longmapsto \,\,
\min_{u \, \in \, U} \, \left< \xi_1, \, f(\xi_2, u) \right>, \: $
this ensures that, for any $ \varepsilon > 0 $, there exist $ x' \in \mathrm{B}_{\eta_2}(x) $ and
$ \, \zeta' \in \mathrm{D}_{\mathrm{P}}^- V_{\mathrm{loc}}(x') \, $ for which
$$
\min_{u \, \in \, U} \: \left< \nu, \, f(x, u) \right> \:\: \leqslant \:\:
\min_{u \, \in \, U} \: \left< \frac{\zeta'}{\| \zeta' \|}, \, f(x', u) \right> \:\, + \:\, \varepsilon \:\:
\leqslant \:\: -\frac{\eta_3}{\| \zeta' \|} \: + \: \varepsilon \:\: \leqslant \:\:
-\frac{\eta_3}{\eta_5} \: + \: \varepsilon.
$$
Since $ \varepsilon > 0 $ can be taken arbitrarily small, the Petrov condition~(\ref{Eq_11}) holds with
$ \, C_4 = \eta_3 / \eta_5 $.
\end{proof}

Other important properties are the opennes, connectedness and weak invariance of the region of asymptotic
null-controllability (recall Definition~\ref{Def_4}).

\begin{proposition}  \label{Pro_12}
Under Assumptions~{\rm \ref{Ass_1}, \ref{Ass_3}} and {\rm \ref{Ass_7},} $ \mathcal{D}_0 $ is an open domain
{\rm (}that is{\rm ,} an open connected set{\rm )} containing $ \Omega_c ${\rm ,} and it is weakly invariant
in the sense that{\rm ,} for any $ x_0 \in \mathcal{D}_0 ${\rm ,} there exists $ \, u(\cdot; x_0) \in \mathcal{U} \, $
satisfying $ \: x(t; \, x_0, \, u(\cdot; x_0)) \, \in \, \mathcal{D}_0 \: $ for all $ t \geqslant 0 $. 
\end{proposition}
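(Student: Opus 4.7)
The plan is to establish the three assertions separately and then deduce connectedness from openness together with the weak invariance. The inclusion $\Omega_c \subseteq \mathcal{D}_0$ is already flagged by Remark~\ref{Rem_9}: the local CLF property~(\ref{Eq_7}) together with the proximal/viscosity equivalence noted in Remark~\ref{Rem_6} and the Petrov-type behaviour secured by Proposition~\ref{Pro_11} provide, for each $x_0 \in \Omega_c$, an admissible control keeping the state in $\Omega_c$ while driving it asymptotically to $0_n$. Weak invariance is then immediate from time-shift: if $u(\cdot; x_0) \in \mathcal{U}$ achieves $\| x(t; x_0, u(\cdot; x_0)) \| \to 0$, then for any fixed $\tau \geqslant 0$ the tail $u(\tau + \cdot; x_0) \in \mathcal{U}$ witnesses that $x(\tau; x_0, u(\cdot; x_0)) \in \mathcal{D}_0$.

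The real work is openness. Fix $x_0 \in \mathcal{D}_0$ and a corresponding control $u \in \mathcal{U}$ with $x(t; x_0, u) \to 0_n$. Item~5 of Assumption~\ref{Ass_7} (with $\varepsilon$ chosen small) yields a finite time $T > 0$ and a value $c' \in (0, c)$ such that $V_{\mathrm{loc}}(x(T; x_0, u)) < c'$, so $x(T; x_0, u)$ lies in the open set $\{ x \in \bar{\Omega} : V_{\mathrm{loc}}(x) < c'\} \subset \mathrm{int}\, \Omega_c$. The nominal trajectory on $[0, T]$ is contained in some compact set $K \subset G$; choose $R > 0$ so that a closed tubular neighbourhood of $K$ lies in $\bar{\mathrm{B}}_R(0_n) \cap \bar{G}$. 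By strong invariance (Item~3 of Assumption~\ref{Ass_1}), any trajectory $x(\cdot; x_0', u)$ with $x_0' \in G$ remains in $G$; the Lipschitz estimate of Item~4 then yields a Gronwall bound
$$
\| x(t; x_0', u) - x(t; x_0, u) \| \:\leqslant\: e^{C_{1,R} T} \, \| x_0' - x_0 \|
$$
for all $t \in [0, T]$ provided $x_0'$ is close enough to $x_0$ that the perturbed trajectory stays in the tubular neighbourhood (a bootstrap on $\| x_0' - x_0 \|$). Continuity of $V_{\mathrm{loc}}$ then forces $V_{\mathrm{loc}}(x(T; x_0', u)) < c$, so $x(T; x_0', u) \in \Omega_c \subseteq \mathcal{D}_0$. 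Concatenating the restriction of $u$ on $[0, T]$ with the control from claim~(a) at the terminal point produces an admissible control that drives $x_0'$ asymptotically to $0_n$, so $x_0' \in \mathcal{D}_0$ and hence $\mathcal{D}_0$ is open.

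Connectedness is then obtained by path-connectedness through trajectories. For $x_0 \in \mathcal{D}_0$, the weakly invariant trajectory $t \mapsto x(t; x_0, u(\cdot; x_0))$ is a continuous curve in $\mathcal{D}_0$ that enters the open connected set $\mathrm{int}\,\Omega_c$ (which contains $0_n$ by positive definiteness of $V_{\mathrm{loc}}$ and $c > 0$) at some finite time $T_1$; inside $\mathrm{int}\,\Omega_c \subseteq \mathcal{D}_0$ there is a continuous path from $x(T_1; x_0, u(\cdot; x_0))$ to $0_n$, and concatenation yields a continuous path in $\mathcal{D}_0$ from $x_0$ to $0_n$. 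I expect the main obstacle to be the Gronwall step of the openness argument, specifically the bootstrap needed because Item~4 of Assumption~\ref{Ass_1} supplies only a local Lipschitz constant: one has to fix $R$ in advance from the nominal trajectory, then shrink the initial perturbation so that the a~priori estimate keeps the perturbed trajectory inside $\bar{\mathrm{B}}_R(0_n) \cap \bar{G}$ throughout $[0, T]$, which closes the loop.
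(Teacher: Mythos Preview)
Your argument is correct and self-contained, but it follows a different route from the paper's. The paper's proof is essentially a pointer to the literature: the inclusion $\Omega_c \subseteq \mathcal{D}_0$ is attributed to Remark~\ref{Rem_9} (as you do), connectedness and weak invariance are referred to \cite[Proposition~2.3,~(ii)]{CamilliGruneWirth2008}, and openness is obtained by invoking the Petrov condition of Proposition~\ref{Pro_11} together with the minimum-time-function machinery of \cite[Chapter~8]{Cannarsa2004} (Theorems~8.2.1 and 8.2.3, Remark~8.1.6). In contrast, you prove openness directly by a Gronwall continuous-dependence argument: the same open-loop control that steers $x_0$ into $\mathrm{int}\,\Omega_c$ at a finite time $T$ also steers every sufficiently close $x_0'$ into $\Omega_c$ at time $T$, after which you concatenate with a control supplied by $\Omega_c \subseteq \mathcal{D}_0$. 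Your approach is more elementary---it does not need Proposition~\ref{Pro_11} or the exit-time value-function theory---while the paper's route is more in keeping with the Cannarsa--Sinestrari framework it relies on throughout Section~2. Your connectedness argument (path-connect every point to $0_n$ along a stabilizing trajectory that enters the path-connected set $\mathrm{int}\,\Omega_c$) and your weak-invariance argument (time-shift of a stabilizing control) are the standard direct proofs and match in spirit what \cite{CamilliGruneWirth2008} does.

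Two small cosmetic points. First, the appeal to Item~5 of Assumption~\ref{Ass_7} is misdirected: that item says small $V_{\mathrm{loc}}$ forces small $\|x\|$, whereas what you actually use is the converse, which follows from the continuity and positive definiteness of $V_{\mathrm{loc}}$ in Item~2. Second, ``claim~(a)'' is never defined in your text; you mean the inclusion $\Omega_c \subseteq \mathcal{D}_0$ from the first paragraph. Neither affects the validity of the argument.
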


\begin{proof}
The inclusion $ \Omega_c \subseteq \mathcal{D}_0 $ was justified in Remark~\ref{Rem_9}. The connectedness and weak
invariance of $ \mathcal{D}_0 $ can be established similarly to \cite[Proposition~2.3, (ii)]{CamilliGruneWirth2008}.
By using Proposition~\ref{Pro_11}, \cite[Remark~8.1.6]{Cannarsa2004}, and the reasonings in
\cite[the proofs of Theorems~8.2.1 and 8.2.3]{Cannarsa2004}, one can show that $ \mathcal{D}_0 $ is open.
In \cite[Chapter~8]{Cannarsa2004}, the global Lipschitz condition is imposed on $ f(\cdot, u) $ uniformly with respect to
$ u \in U $, but it can in fact be relaxed to Items~4 and 5 of Assumption~\ref{Ass_1} when verifying the openness of
$ \mathcal{D}_0 $.
\end{proof}

Next, let us adopt the convention $ \, \inf \, \emptyset = +\infty \, $ and introduce the minimum times of reaching
$ \Omega_c $:
\begin{equation}
T_{\Omega_c}(x_0, u(\cdot)) \:\: \stackrel{\mathrm{def}}{=} \:\: \inf \:
\{ T \in [0, +\infty) \: \colon \: x(T; \, x_0, u(\cdot)) \: \in \: \Omega_c \} \quad
\forall x_0 \in G \quad \forall u(\cdot) \in \mathcal{U}.  \label{Eq_20}
\end{equation}

\begin{figure}
\begin{center}
\includegraphics[width=5.7cm,height=4cm]{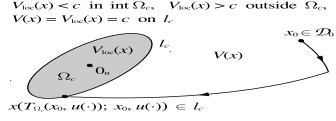}
\end{center}
\bf \caption{\rm The exit-time optimal control problem~{\rm (\ref{Eq_21}),} whose target set is a level set of a local CLF.}
\label{Fig_1}
\end{figure}

A key point of this section is to represent a sought-after CLF outside the sublevel set~$ \Omega_c $ as the value function
in an exit-time optimal control problem, stated with respect to the target set~$ l_c $ and the constant terminal cost
$ \, V_{\mathrm{loc}}(x) = c \, $ for $ x \in l_c $ (see Fig.~\ref{Fig_1}):
\begin{equation}
V(x_0) \:\: \stackrel{\mathrm{def}}{=} \:\: \inf_{\substack{u(\cdot) \, \in \, \mathcal{U} \: \colon \\
T_{\Omega_c}(x_0, \, u(\cdot)) \: < \: +\infty}} \: \left\{ \int\limits_0^{T_{\Omega_c}(x_0, \, u(\cdot))}
g(x(t; \, x_0, u(\cdot)), \: u(t)) \: \mathrm{d} t \:\, + \:\, c \right\} \quad
\forall \, x_0 \, \in \, G \setminus \Omega_c.  \label{Eq_21}
\end{equation}

\begin{assumption}  \label{Ass_13}
The following conditions concerning the running cost~$ g(\cdot, \cdot) $ hold{\rm :}
\begin{list}{\rm \arabic{count})}%
{\usecounter{count}}
\setlength\itemsep{0em}
\item  $ \bar{G} \times U \, \ni \, (x, u) \,\, \longmapsto \,\, g(x, u) \: \in \: [0, +\infty) \: $ is
a nonnegative continuous function{\rm ;}
\item  for any $ R > 0 ${\rm ,} there exists $ C_{5, R} > 0 $ such that
\begin{equation}
|g(x, u) \, - \, g(x', u)| \:\, \leqslant \:\, C_{5, R} \, \| x - x' \| \quad
\forall \: x, x' \, \in \, \bar{\mathrm{B}}_R(0_n) \, \cap \, \bar{G} \quad \forall u \in U;  \label{Eq_68}
\end{equation}
\item  $ g(x, u) > 0 $ for all $ \, x \, \in \, G \setminus \{ 0_n \} \, $ and $ u \in U ${\rm ;}
\item  $ C_6 \:\, \stackrel{\mathrm{def}}{=} \:\,
\inf \: \{ g(x, u) \: \colon \: x \, \in \, G \, \setminus \, \mathrm{int} \, \Omega_c, \:\: u \in U \} \:\, > \:\, 0 $.
\end{list}
\end{assumption}

\begin{proposition}  \label{Pro_14}
Under Assumptions~{\rm \ref{Ass_1}, \ref{Ass_3}, \ref{Ass_7}} and {\rm \ref{Ass_13},} the following relations hold
for the value function~{\rm (\ref{Eq_21}):}
\begin{equation}
V(x_0) \, = \, +\infty \quad \forall \, x_0 \, \in \, G \setminus \mathcal{D}_0,  \label{Eq_22}
\end{equation}
\begin{equation}
V(x_0) \, > \, c \quad \forall \, x_0 \, \in \, \mathcal{D}_0 \setminus \Omega_c.  \label{Eq_23}
\end{equation}
\end{proposition}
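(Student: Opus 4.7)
The plan is to treat the two assertions separately by short direct arguments.

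For~(\ref{Eq_22}), I would show that whenever $x_0 \in G \setminus \mathcal{D}_0$, the feasible set $\{u(\cdot) \in \mathcal{U} \colon T_{\Omega_c}(x_0, u(\cdot)) < +\infty\}$ in the infimum defining $V(x_0)$ is empty, so that by the convention $\inf \emptyset = +\infty$ the required equality follows. To this end, suppose on the contrary that some $u(\cdot) \in \mathcal{U}$ drives $x_0$ into $\Omega_c$ at a finite time $T \geqslant 0$. Remark~\ref{Rem_9} gives $\Omega_c \subseteq \mathcal{D}_0$, hence $x(T; x_0, u(\cdot)) \in \mathcal{D}_0$, and Definition~\ref{Def_4} supplies a control $\tilde{u}(\cdot) \in \mathcal{U}$ asymptotically steering this state to the origin. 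Concatenating $u(\cdot)$ on $[0,T)$ with $\tilde{u}(\cdot - T)$ on $[T, +\infty)$ produces a control in $\mathcal{U}$ driving $x_0$ to the origin, contradicting $x_0 \notin \mathcal{D}_0$.

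For~(\ref{Eq_23}), the bound $V(x_0) \geqslant c$ is immediate from $g \geqslant 0$ and the fixed terminal cost, so what must be shown is strictness. Fix $x_0 \in \mathcal{D}_0 \setminus \Omega_c$. Since $\Omega_c$ is compact (Remark~\ref{Rem_9}) and $x_0 \notin \Omega_c$, the number $\delta := \mathrm{dist}(x_0, \Omega_c)$ is strictly positive. Put $R := \|x_0\| + \delta$; then $\bar{\mathrm{B}}_R(0_n) \cap \bar{G} \subset G_1$ is compact, so by Items~1,2 of Assumption~\ref{Ass_1} there exists $M > 0$ with $\|f(x, u)\| \leqslant M$ on $(\bar{\mathrm{B}}_R(0_n) \cap \bar{G}) \times U$. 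The key observation is then a uniform lower bound on the exit time: for any admissible $u(\cdot)$ with $T := T_{\Omega_c}(x_0, u(\cdot)) < +\infty$, the strong invariance of $G$ together with the speed bound forces $\|x(t; x_0, u(\cdot)) - x_0\| \leqslant M t$ as long as the trajectory stays inside $\bar{\mathrm{B}}_R(0_n)$, so the trajectory cannot reach $\Omega_c$ before time $\delta/M$. Combined with the bound $g \geqslant C_6 > 0$ on $G \setminus \mathrm{int}\, \Omega_c$ from Item~4 of Assumption~\ref{Ass_13}, this yields
\[
\int_0^{T} g(x(t; x_0, u(\cdot)), u(t)) \, \mathrm{d} t \:\, \geqslant \:\, C_6 \, T \:\, \geqslant \:\, \frac{C_6 \, \delta}{M}
\]
uniformly in $u(\cdot)$, and therefore $V(x_0) \geqslant c + C_6 \delta / M > c$.

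Neither step looks deep; both rest on machinery already assembled in the paper (Remark~\ref{Rem_9}, the positive uniform lower bound $C_6$ on $g$ outside $\mathrm{int}\, \Omega_c$, compactness of $\Omega_c$, and boundedness of $f$ on compacta). The only point I expect to require real care is making the bound $T \geqslant \delta/M$ watertight when $G$ may be unbounded: I must rule out the trajectory escaping $\bar{\mathrm{B}}_R(0_n)$ before time $\delta/M$. I would handle this by letting $t^\ast \in (0, T]$ be the first time at which $\|x(t; x_0, u(\cdot))\| = R$ (with $t^\ast := T$ if this never happens); on $[0, t^\ast]$ the speed bound $M$ applies throughout, which yields $\delta = R - \|x_0\| \leqslant \|x(t^\ast; x_0, u(\cdot))\| - \|x_0\| \leqslant M t^\ast$, hence $t^\ast \geqslant \delta/M$. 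Consequently on $[0, \min\{T, \delta/M\})$ the inequality $\|x(t; x_0, u(\cdot)) - x_0\| \leqslant M t < \delta$ is legitimate, so the trajectory cannot have entered $\Omega_c$ before time $\delta/M$, which is the required uniform lower bound on $T$.
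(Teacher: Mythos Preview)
Your proposal is correct and follows essentially the same approach as the paper: for~(\ref{Eq_22}) both arguments rest on $\Omega_c \subseteq \mathcal{D}_0$ and concatenation, and for~(\ref{Eq_23}) both establish a strictly positive lower bound on the exit time $T_{\Omega_c}(x_0,u(\cdot))$ via a local speed bound on $f$ and then invoke $g \geqslant C_6$ from Item~4 of Assumption~\ref{Ass_13}. The only cosmetic difference is that the paper argues the exit-time bound by contradiction (assuming $\inf_{u} T_{\Omega_c}(x_0,u(\cdot)) = 0$ and using the reachable-set boundedness from Remark~\ref{Rem_2}), whereas you give the explicit quantitative estimate $T \geqslant \delta/M$ directly.
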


\begin{proof}
The property~(\ref{Eq_22}) is clear due to the definition~(\ref{Eq_21}) and Proposition~\ref{Pro_12}.
For establishing (\ref{Eq_23}), let us take $ \, x_0 \, \in \, \mathcal{D}_0 \setminus \Omega_c \, $ and show that
\begin{equation}
\mathcal{T}_{\Omega_c}(x_0) \:\, \stackrel{\mathrm{def}}{=} \:\,
\inf_{u(\cdot) \, \in \, \mathcal{U}} \: T_{\Omega_c}(x_0, u(\cdot)) \:\, > \:\, 0.  \label{Eq_24}
\end{equation}
Assume $ \, \mathcal{T}_{\Omega_c}(x_0) = 0 $. Then there exist a number~$ T > 0 $ and
a sequence $ \, \left\{ u^{(k)}(\cdot) \right\}_{k = 1}^{\infty} \subset \mathcal{U} \, $ such that
$ \: T^{(k)} \: \stackrel{\mathrm{def}}{=} \: T_{\Omega_c} \left( x_0, \, u^{(k)}(\cdot) \right) \: \leqslant \: T \: $
for all $ k \in \mathbb{N} $ and $ \: \lim_{k \, \to \, \infty} \, T^{(k)} \, = \, 0 $. According to Remark~\ref{Rem_2},
the reachable set
$$
X_T(x_0) \:\: \stackrel{\mathrm{def}}{=} \:\:
\{ x(t; \, x_0, u(\cdot)) \: \colon \: t \in [0, T], \:\: u(\cdot) \in \mathcal{U} \} \:\: \subseteq \:\: G
$$
is bounded. Since $ f(\cdot, \cdot) $ is continuous and $ U $ is compact, one has
$$
M_T(x_0) \:\, \stackrel{\mathrm{def}}{=} \:\,
\max_{x \, \in \, \overline{X_T(x_0)}, \:\: u \, \in \, U} \: \| f(x, u) \| \:\, < \:\, +\infty.
$$
Hence,
$$
\arraycolsep=1.5pt
\def\arraystretch{1.5}
\begin{array}{c}
x \left( T^{(k)}; \: x_0, \, u^{(k)}(\cdot) \right) \:\, \in \:\, \Omega_c \quad \forall k \in \mathbb{N}, \\
0 \:\: \leqslant \:\: \lim\limits_{k \, \to \, \infty} \:
\left\| x \left( T^{(k)}; \: x_0, \, u^{(k)}(\cdot) \right) \,\, - \,\, x_0 \right\| \:\: \leqslant \:\:
M_T(x_0) \: \lim\limits_{k \, \to \, \infty} \, T^{(k)} \:\, = \:\, 0,
\end{array}
$$
which contradicts with $ x_0 \notin \Omega_c $. This implies (\ref{Eq_24}). From (\ref{Eq_21}), (\ref{Eq_24}) and Item~4 of
Assumption~\ref{Ass_13}, one obtains $ \: V(x_0) - c \: \geqslant \: C_6 \, \mathcal{T}_{\Omega_c}(x_0) \: > \: 0 $.
\end{proof}

It is reasonable to extend the function~(\ref{Eq_21}) to $ \Omega_c $ by
\begin{equation}
V(x_0) \, \stackrel{\mathrm{def}}{=} \, V_{\mathrm{loc}}(x_0) \quad \forall x_0 \in \Omega_c  \label{Eq_25}
\end{equation}
(see Fig.~\ref{Fig_1}).

\begin{proposition}  \label{Pro_15}
Let Assumptions~{\rm \ref{Ass_1}, \ref{Ass_3}, \ref{Ass_7}} and {\rm \ref{Ass_13}} hold{\rm ,} and consider
the function~$ V(\cdot) $ defined by {\rm (\ref{Eq_21})} and {\rm (\ref{Eq_25})}. The following properties
hold{\rm :}
$$
\arraycolsep=1.5pt
\def\arraystretch{1.3}
\begin{array}{c}
V(x_0) \, < \, +\infty \quad \forall x_0 \in \mathcal{D}_0, \\
V(x_0) \, = \, +\infty \quad \forall \, x_0 \, \in \, G \setminus \mathcal{D}_0, \\
V(x_0) \, = \, V_{\mathrm{loc}}(x_0) \, < \, c \quad \forall \, x_0 \, \in \, \mathrm{int} \, \Omega_c, \\
V(x_0) \, = \, V_{\mathrm{loc}}(x_0) \, = \, c \quad \forall \, x_0 \, \in \, l_c = \partial \Omega_c, \\
V(x_0) \, > \, c \quad \forall \, x_0 \, \in \, G \setminus \Omega_c, \\
V_{\mathrm{loc}}(x_0) \, > \, c \quad \forall \, x_0 \, \in \, \Omega \setminus \Omega_c.
\end{array}
$$
\end{proposition}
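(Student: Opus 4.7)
The plan is to dispatch five of the six assertions as bookkeeping from earlier results and from the definitions, leaving only the finiteness claim on $\mathcal{D}_0 \setminus \Omega_c$ as the substantive step.

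First I would record that $V(x_0) = +\infty$ on $G \setminus \mathcal{D}_0$ is exactly (\ref{Eq_22}) of Proposition~\ref{Pro_14}. Next, for any $x_0 \in \Omega \setminus \Omega_c$ the definition (\ref{Eq_8}) forces $V_{\mathrm{loc}}(x_0) > c$, giving the last asserted inequality directly. For the equalities on $\mathrm{int}\,\Omega_c$ and on $l_c$, Item~4 of Assumption~\ref{Ass_7} identifies $\mathrm{int}\,\Omega_c$ with the strict sublevel set $\{x \in \bar{\Omega} \colon V_{\mathrm{loc}}(x) < c\}$ (because $\partial \Omega_c = l_c = \{V_{\mathrm{loc}} = c\}$ and $\Omega_c = \{V_{\mathrm{loc}} \leqslant c\} \cap \bar{\Omega}$); combining this observation with the extension rule (\ref{Eq_25}) produces both $V = V_{\mathrm{loc}} < c$ on $\mathrm{int}\,\Omega_c$ and $V = V_{\mathrm{loc}} = c$ on $l_c$. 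Finally, for $x_0 \in G \setminus \Omega_c$ the bound $V(x_0) > c$ splits into two cases: either $x_0 \notin \mathcal{D}_0$ and then $V(x_0) = +\infty > c$, or $x_0 \in \mathcal{D}_0 \setminus \Omega_c$ and (\ref{Eq_23}) of Proposition~\ref{Pro_14} applies.

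The substantive content is thus to show $V(x_0) < +\infty$ on all of $\mathcal{D}_0$. On $\Omega_c$ this is immediate from (\ref{Eq_25}) and the continuity of $V_{\mathrm{loc}}$ on the compact set $\Omega_c$ (recall Remark~\ref{Rem_9}). For $x_0 \in \mathcal{D}_0 \setminus \Omega_c$ I would invoke Definition~\ref{Def_4} to pick a control $u(\cdot) \in \mathcal{U}$ whose trajectory $x(\cdot;x_0,u(\cdot))$ converges to $0_n$. Since $0_n \in \Omega$, $V_{\mathrm{loc}}(0_n) = 0 < c$, and $V_{\mathrm{loc}}$ is continuous, some open ball $\mathrm{B}_r(0_n)$ lies inside $\{V_{\mathrm{loc}} < c\} = \mathrm{int}\,\Omega_c$, so the trajectory enters $\Omega_c$ at some finite moment and $\tau := T_{\Omega_c}(x_0, u(\cdot)) < +\infty$. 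By Remark~\ref{Rem_2} the arc $\{x(t; x_0, u(\cdot)) \colon t \in [0, \tau]\}$ is contained in a compact subset of $G$, and by Item~1 of Assumption~\ref{Ass_13} together with compactness of $U$ the integrand $g(x(t), u(t))$ is uniformly bounded on $[0, \tau]$, yielding a finite value of the objective in (\ref{Eq_21}) and hence $V(x_0) < +\infty$.

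The only obstacle I foresee is a minor one: ensuring that the asymptotically stabilizing trajectory genuinely enters $\Omega_c$ rather than merely approaching $0_n$. This requires $0_n \in \mathrm{int}\,\Omega_c$, which follows by combining $0_n \in \Omega$ from Item~1 of Assumption~\ref{Ass_7} with the positive definiteness and continuity of $V_{\mathrm{loc}}$ to get $\mathrm{B}_r(0_n) \subseteq \mathrm{int}\,\Omega_c$ for some $r > 0$. Once this is in place the remainder of the argument is routine assembly of the ingredients cited above.
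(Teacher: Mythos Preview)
Your proposal is correct and follows essentially the same approach as the paper, which simply cites Definition~\ref{Def_4}, Assumption~\ref{Ass_7}, Remark~\ref{Rem_9}, and Proposition~\ref{Pro_14} without further elaboration. Your write-up is a faithful unpacking of that one-line proof, including the one nontrivial step (finiteness on $\mathcal{D}_0 \setminus \Omega_c$ via a trajectory that must enter a ball around $0_n$ contained in $\mathrm{int}\,\Omega_c$), and the minor subtlety you flag about $0_n \in \mathrm{int}\,\Omega_c$ is handled correctly.
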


\begin{proof}
These relations can be directly obtained by using Definition~\ref{Def_4}, Assumption~\ref{Ass_7}, Remark~\ref{Rem_9}, and
Proposition~\ref{Pro_14}.
\end{proof}

One more technical assumption will be required below.

\begin{assumption}  \label{Ass_16}
There exist positive constants~$ C_7, C_8 $ such that
$$
g(x, u) \: \geqslant \: C_8 \, \| f(x, u) \| \quad \forall \, x \, \in \, G \setminus \mathrm{B}_{C_7}(0_n) \quad
\forall u \in U.
$$
\end{assumption}

\subsection{Main result}

The main result of this section (Theorem~\ref{Thm_18}) indicates that, under the adopted assumptions, the concatenation
of the local CLF in $ \Omega_c $ with the value function for the exit-time optimal control problem~(\ref{Eq_21}) is
a global CLF in the whole domain of asymptotic null-controllability. Before verifying the main result, let us establish
some auxiliary properties.

\begin{proposition}  \label{Pro_17}
Under Assumptions~{\rm \ref{Ass_1}, \ref{Ass_3}, \ref{Ass_7}, \ref{Ass_13}} and {\rm \ref{Ass_16},} the following properties
hold for the function~$ V(\cdot) $ defined by {\rm (\ref{Eq_21})} and {\rm (\ref{Eq_25}):}
\begin{list}{\rm \arabic{count})}%
{\usecounter{count}}
\setlength\itemsep{0em}
\item  $ V(\cdot) $ is locally Lipschitz continuous in $ \mathcal{D}_0 ${\rm ;}
\item  the restriction of $ V(\cdot) $ to $ \mathcal{D}_0 \setminus \Omega_c $ solves the HJB equation
$$
\max_{u \, \in \, U} \: \{ -\left< \mathrm{D} V(x), \, f(x, u) \right> \: - \: g(x, u) \} \:\: = \:\: 0, \quad
x \, \in \, \mathcal{D}_0 \setminus \Omega_c,
$$
in the viscosity sense{\rm ;}
\item  for any sequence $ \: \left\{ x^{(k)} \right\}_{k = 1}^{\infty} \, \subset \, \mathcal{D}_0 \: $
satisfying either $ \: \lim_{k \, \to \, \infty} \, x^{(k)} \, = \, x' \, \in \, \partial \mathcal{D}_0 \: $ or
$ \: \lim_{k \, \to \, \infty} \, \left\| x^{(k)} \right\| \, = \, +\infty, \: $ one has
$ \: \lim_{k \, \to \, \infty} \, V \left( x^{(k)} \right) \, = \, +\infty $.
\end{list}
\end{proposition}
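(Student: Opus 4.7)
My approach is to treat the three items in the natural order (1), (3), (2), because the HJB characterization in (2) will follow from the dynamic programming principle once continuity from (1) is in hand, whereas the properness in (3) is essentially independent but uses Assumption~\ref{Ass_16} in a crucial way.

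\medskip

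\emph{Step 1 (local Lipschitz continuity).} The key ingredient is the Petrov condition on $l_c$ from Proposition~\ref{Pro_11}: it guarantees that from any point in a tubular neighborhood of $l_c$ one can steer into $\Omega_c$ in a time linearly bounded by the distance to $\Omega_c$, by choosing a control that activates the minimum in \eqref{Eq_11}. Away from $\Omega_c$, I would use the standard argument for exit-time value functions (see, e.g., Bardi--Capuzzo-Dolcetta, Ch.~IV, or Cannarsa--Sinestrari): for $x_0, x_0' \in \mathcal{D}_0 \setminus \Omega_c$ close enough, fix a nearly optimal control for $x_0$ and propagate from $x_0'$ with the same control; by the Gronwall-type estimate provided by Item~4 of Assumption~\ref{Ass_1} on bounded sets, the two trajectories remain $O(\|x_0-x_0'\|)$-close up to the exit time, and Assumption~\ref{Ass_13}(2) turns this into an $O(\|x_0-x_0'\|)$ bound on the running-cost integrals. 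The last piece is the time-to-reach $l_c$: by combining Petrov's condition with a small steering maneuver one can adjust the exit time by at most $O(\|x_0 - x_0'\|)$. For the gluing at $l_c$, the local Lipschitz continuity of $V_{\mathrm{loc}}$ (Item~3 of Assumption~\ref{Ass_7}) handles the interior side, while on the exterior side the constant terminal cost $V_{\mathrm{loc}}\equiv c$ on $l_c$ plus the Petrov-based estimate yield a one-sided Lipschitz bound $V(x_0) - c \leq K\,\mathrm{dist}(x_0,\Omega_c)$; symmetry of the Lipschitz inequality then follows from the DPP applied from points on the interior side.

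\medskip

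\emph{Step 2 (properness, item 3).} Here I would argue by contradiction using Assumption~\ref{Ass_16}. Suppose $V(x^{(k)})$ stays bounded along a sequence $\{x^{(k)}\}$. Pick near-optimal trajectories $x^{(k)}(\cdot)$ with exit times $T^{(k)}$. For the case $\|x^{(k)}\|\to\infty$: since $\Omega_c$ is compact, the arclength of $x^{(k)}(\cdot)$ up to time $T^{(k)}$ is at least $\|x^{(k)}\| - R_0$ for some fixed $R_0$, and restricting to the portion outside $\mathrm{B}_{C_7}(0_n)$ Assumption~\ref{Ass_16} gives
\begin{equation*}
V(x^{(k)}) - c \;\geq\; C_8\!\int_0^{T^{(k)}}\!\!\|f(x^{(k)}(t),u^{(k)}(t))\|\,\mathrm{d}t - o(1) \;\geq\; C_8(\|x^{(k)}\|-R_0') - o(1),
\end{equation*}
which blows up. For the case $x^{(k)}\to x'\in\partial\mathcal{D}_0$: boundedness of the cost together with Assumption~\ref{Ass_13}(4) bounds $T^{(k)}$ uniformly; by the forward completeness (Remark~\ref{Rem_2}) and Assumption~\ref{Ass_3} (convexity of the velocity set) the trajectories lie in a fixed compact set and I can extract, via Filippov--Wa\.zewski/Ascoli, a limit admissible trajectory from $x'$ reaching $\Omega_c$ in finite time. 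This would imply $x'\in\mathcal{D}_0$, contradicting $x'\in\partial\mathcal{D}_0$ because $\mathcal{D}_0$ is open by Proposition~\ref{Pro_12}.

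\medskip

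\emph{Step 3 (HJB, item 2).} Once continuity is established on $\mathcal{D}_0\setminus\Omega_c$, the dynamic programming principle
\begin{equation*}
V(x_0) \;=\; \inf_{u(\cdot)\in\mathcal{U}}\Bigl\{\,\int_0^{\tau} g(x(t;x_0,u),u(t))\,\mathrm{d}t \,+\, V(x(\tau;x_0,u))\Bigr\}
\end{equation*}
for small $\tau>0$ (with trajectories still outside $\Omega_c$) follows from the definition \eqref{Eq_21} by splitting the cost and using the semigroup property of the flow. From this principle, the standard Crandall--Lions argument yields that $V$ is both a viscosity sub- and supersolution of the stated HJB equation on $\mathcal{D}_0\setminus\Omega_c$; Assumptions~\ref{Ass_1}(2) and \ref{Ass_13}(1) give the continuity of the Hamiltonian needed for this step.

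\medskip

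\textbf{Main obstacle.} The subtle point I expect will be the local Lipschitz continuity \emph{across} $l_c$ and the two-sided Lipschitz bound just outside $\Omega_c$. The Petrov condition from Proposition~\ref{Pro_11} is exactly what one needs, but turning it into a uniform time-to-reach estimate that matches the local Lipschitz constant of $V_{\mathrm{loc}}$ requires careful bookkeeping on the running cost and on the deviation between nearby trajectories. The properness as $x\to\partial\mathcal{D}_0$ is the second delicate point because it hinges on a compactness extraction that crucially uses the openness of $\mathcal{D}_0$ (Proposition~\ref{Pro_12}) and the convexity of the velocity set (Assumption~\ref{Ass_3}).
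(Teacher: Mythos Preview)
Your proposal is correct and follows essentially the same route as the paper: the paper disposes of Items~1, 2, and the boundary case of Item~3 by invoking the exit-time theory in Cannarsa--Sinestrari (Petrov condition $\Rightarrow$ local Lipschitz continuity, DPP $\Rightarrow$ viscosity HJB, and blow-up at $\partial\mathcal{D}_0$ via exactly the compactness/extraction argument you sketch), and then handles the case $\|x^{(k)}\|\to\infty$ by the same arclength lower bound from Assumption~\ref{Ass_16} that you describe. The only cosmetic difference is that the paper picks $C_7'\geqslant C_7$ large enough that $\Omega_c\subset\mathrm{B}_{C_7'}(0_n)$ and bounds the cost up to the first entry into $\bar{\mathrm{B}}_{C_7'}(0_n)$, which avoids the ``$-o(1)$'' bookkeeping in your display.
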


\begin{proof}
For verifying Items~1,\,2, as well as Item~3 for
$ \: \lim_{k \, \to \, \infty} \, x^{(k)} \, = \, x' \, \in \, \partial \mathcal{D}_0, \: $ it suffices to use
\cite[Remark~8.1.6]{Cannarsa2004} and the reasonings in \cite[the proofs of Theorem~8.2.5, Theorem~8.1.8 and
Proposition~8.2.6]{Cannarsa2004}. As in the proof of Proposition~\ref{Pro_12}, Items~4 and 5 of Assumption~\ref{Ass_1}
replace the requirement that $ f(\cdot, u) $ should satisfy the global Lipschitz condition uniformly with respect to
$ u \in U $.

It remains to prove Item~3 in case $ \: \lim_{k \, \to \, \infty} \, \left\| x^{(k)} \right\| \, = \, +\infty $.
Consider such a sequence $ \: \left\{ x^{(k)} \right\}_{k = 1}^{\infty} \, \subset \, \mathcal{D}_0 $.
In line with Assumption~\ref{Ass_16} and the compactness of $ \Omega_c $, there exists a constant~$ C'_7 \geqslant C_7 $
satisfying
$$
\arraycolsep=1.5pt
\def\arraystretch{1.3}
\begin{array}{c}
\Omega_c \, \subseteq \, \mathrm{B}_{C'_7}(0_n), \\
g(x, u) \: \geqslant \: C_8 \, \| f(x, u) \| \quad \forall \, x \, \in \, G \setminus \mathrm{B}_{C'_7}(0_n) \quad
\forall u \in U.
\end{array}
$$
Denote
$$
T_k(u(\cdot)) \:\: \stackrel{\mathrm{def}}{=} \:\: \inf \: \left\{ T \in [0, +\infty) \: \colon \:\,
x \left( T; \: x^{(k)}, \, u(\cdot) \right) \: \in \: \bar{\mathrm{B}}_{C'_7}(0_n) \right\} \quad
\forall u(\cdot) \in \mathcal{U} \quad \forall k \in \mathbb{N}.
$$
Then one has
$$
\begin{aligned}
V \left( x^{(k)} \right) \:\: > \:\: & \inf_{\substack{u(\cdot) \, \in \, \mathcal{U} \: \colon \\
T_k(u(\cdot)) \: < \: +\infty}} \: \left\{ \int\limits_0^{T_k(u(\cdot))}
g \left( x \left( t; \: x^{(k)}, \, u(\cdot) \right), \: u(t) \right) \: \mathrm{d} t \right\}  \\
\geqslant \:\: & C_8 \: \inf_{\substack{u(\cdot) \, \in \, \mathcal{U} \: \colon \\
T_k(u(\cdot)) \: < \: +\infty}} \: \left\{ \int\limits_0^{T_k(u(\cdot))}
\left\| f \left( x \left( t; \: x^{(k)}, \, u(\cdot) \right), \: u(t) \right) \right\| \: \mathrm{d} t \right\} \\
\geqslant \:\: & C_8 \: \inf_{\substack{u(\cdot) \, \in \, \mathcal{U} \: \colon \\
T_k(u(\cdot)) \: < \: +\infty}} \: \left\| x \left( T_k(u(\cdot)); \: x^{(k)}, \, u(\cdot) \right) \: - \:
x^{(k)} \right\| \\
\geqslant \:\: & C_8 \: \left( \left\| x^{(k)} \right\| \, - \, C'_7 \right)
\end{aligned}
$$
for all $ k \in \mathbb{N} $. Together with $ \: \lim_{k \, \to \, \infty} \, \left\| x^{(k)} \right\| \, = \, +\infty, \: $
this leads to $ \: \lim_{k \, \to \, \infty} \, V \left( x^{(k)} \right) \, = \, +\infty $.
\end{proof}

\begin{theorem}  \label{Thm_18}
Let Assumptions~{\rm \ref{Ass_1}, \ref{Ass_3}, \ref{Ass_7}, \ref{Ass_13}} and {\rm \ref{Ass_16}} hold. The function~$ V(\cdot) $
defined by {\rm (\ref{Eq_21})} and {\rm (\ref{Eq_25})} is a CLF for the system~{\rm (\ref{Eq_1})} in $ \mathcal{D}_0 ${\rm ,}
i.\,e.{\rm ,} the restriction of this function to $ \mathcal{D}_0 $ is a continuous{\rm ,} proper{\rm ,} positive definite and
such that the infinitesimal decrease condition
\begin{equation}
\inf_{u \, \in \, U} \: \partial^- V(x; \, f(x, u)) \:\, \leqslant \:\, -W(x) \quad \forall x \in \mathcal{D}_0
\label{Eq_26}
\end{equation}
holds with some continuous and positive definite function~$ \: W \, \colon \, \mathcal{D}_0 \to [0, +\infty) $.
Furthermore{\rm ,} $ V(\cdot) $ is locally Lipschitz continuous in $ \mathcal{D}_0 $ and therefore differentiable almost
everywhere in $ \mathcal{D}_0 $ {\rm (}with respect to the Lebesgue measure in $ \mathbb{R}^n ${\rm )}.
\end{theorem}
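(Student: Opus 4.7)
The goal is to verify the four defining pieces of the CLF property for $V$---local Lipschitz regularity (hence continuity), properness, positive definiteness, and the infinitesimal Dini decrease~(\ref{Eq_26})---and then to assemble a single continuous positive definite $W$. The first three are essentially immediate. Local Lipschitz continuity on $\mathcal{D}_0$ is Item~1 of Proposition~\ref{Pro_17}. Properness follows by combining continuity with Item~3 of Proposition~\ref{Pro_17}: the preimage $V^{-1}(M)$ of a compact $M \subset \mathbb{R}$ is closed, bounded, and bounded away from $\partial \mathcal{D}_0$, hence a compact subset of $\mathcal{D}_0$. Positive definiteness is read off Proposition~\ref{Pro_15}: $V(0_n)=V_{\mathrm{loc}}(0_n)=0$, $V = V_{\mathrm{loc}} > 0$ on $\mathrm{int}\,\Omega_c \setminus \{0_n\}$, and $V \geqslant c > 0$ on the rest of $\mathcal{D}_0$.

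For (\ref{Eq_26}) I split $\mathcal{D}_0$ into the three pieces $\mathrm{int}\,\Omega_c$, $l_c$, and $\mathcal{D}_0 \setminus \Omega_c$. On $\mathrm{int}\,\Omega_c$, $V$ coincides with $V_{\mathrm{loc}}$ on a full neighborhood of $x$, so the lower Dini directional derivatives agree and~(\ref{Eq_7}) yields $\inf_{u \in U}\partial^- V(x;\, f(x,u)) \leqslant -W_{\mathrm{loc}}(x)$. On $\mathcal{D}_0 \setminus \Omega_c$, the viscosity HJB equation of Proposition~\ref{Pro_17}, Item~2, combined with local Lipschitz continuity of $V$ and the convexity Assumption~\ref{Ass_3}, converts (via the Dini/viscosity equivalence machinery underlying Remark~\ref{Rem_6}, applied now to the present value function and its HJB) into the Dini-subsolution form $\inf_{u \in U}\{\partial^-V(x;\, f(x,u)) + g(x,u)\} \leqslant 0$. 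Since $U$ is compact and $u \mapsto \partial^-V(x; f(x,u)) + g(x,u)$ is continuous (Lipschitz continuity of $v \mapsto \partial^-V(x; v)$ is inherited from the Lipschitz constant of $V$, composed with continuous $u \mapsto f(x,u)$), the infimum is attained at some $u^*(x) \in U$, and the chain $\inf_{u \in U}\partial^-V(x;\, f(x,u)) \leqslant -g(x, u^*(x)) \leqslant -\min_{u \in U} g(x, u)$ delivers the bound.

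The delicate case is $x \in l_c$. By the analogous compactness argument I pick $u_*(x) \in U$ attaining the infimum in $\inf_{u \in U}\partial^-V_{\mathrm{loc}}(x;\, f(x,u))$, so that $\partial^-V_{\mathrm{loc}}(x;\, f(x, u_*(x))) \leqslant -W_{\mathrm{loc}}(x) < 0$ (strict because $l_c \not\ni 0_n$ and $W_{\mathrm{loc}}$ is positive definite). The strict negativity supplies a sequence $h_k \to 0^+$ with $V_{\mathrm{loc}}(x + h_k f(x, u_*(x))) < V_{\mathrm{loc}}(x) = c$, so that $x + h_k f(x,u_*(x)) \in \mathrm{int}\,\Omega_c$ for large $k$, where $V = V_{\mathrm{loc}}$. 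The difference quotients of $V$ and $V_{\mathrm{loc}}$ along that sequence coincide, whence $\partial^-V(x;\, f(x,u_*(x))) \leqslant \partial^-V_{\mathrm{loc}}(x;\, f(x, u_*(x))) \leqslant -W_{\mathrm{loc}}(x)$. To finish, $W$ is built as any continuous positive definite function on $\mathcal{D}_0$ satisfying $W \leqslant W_{\mathrm{loc}}$ on $\Omega_c$ and $W \leqslant \min_{u \in U} g(\cdot, u)$ on $\mathcal{D}_0 \setminus \Omega_c$; both majorants are continuous on their domains (Berge's theorem for the second) and strictly positive away from $0_n$ (Item~3 of Assumption~\ref{Ass_13}), so a common continuous positive definite envelope is readily formed as a pointwise minimum regularized by a small $\mathcal{K}$-function of $\|x\|$ to guarantee matching across $l_c$. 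The principal obstacle is the $l_c$-analysis---specifically, identifying $\partial^-V$ with $\partial^-V_{\mathrm{loc}}$ there despite $l_c$ being only piecewise regular---which is resolved cleanly by the strict sign of the Dini derivative of $V_{\mathrm{loc}}$.
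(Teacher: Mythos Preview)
Your proof is correct and follows the same three-region decomposition ($\mathrm{int}\,\Omega_c$, $l_c$, $\mathcal{D}_0 \setminus \Omega_c$) as the paper's, with the same sources for regularity, properness, and positive definiteness. The only notable differences are in execution: on $l_c$ you work with a single minimizer $u_*(x)$ of $\partial^- V_{\mathrm{loc}}(x;f(x,\cdot))$ and use strict negativity to land in $\mathrm{int}\,\Omega_c$, whereas the paper introduces the set $U_x = \{u \in U : x + \lambda_k f(x,u) \in \Omega_c \text{ along some } \lambda_k \to 0\}$ (invoking Proposition~\ref{Pro_11} for nonemptiness) and shows $\inf_U = \inf_{U_x}$ for both $V$ and $V_{\mathrm{loc}}$; on $\mathcal{D}_0 \setminus \Omega_c$ the paper first reduces the viscosity HJB to the viscosity decrease form $\max_u\{-\langle\zeta,f\rangle\} \geqslant \min_u g(x,u)$ with a $u$-independent right-hand side \emph{before} invoking Remark~\ref{Rem_6}, while your direct claim of the Dini-HJB form $\inf_u\{\partial^- V(x;f(x,u)) + g(x,u)\} \leqslant 0$ appeals to equivalence machinery that Remark~\ref{Rem_6} (stated only for $u$-independent $\Pi_2$) does not literally cover---a cosmetic looseness, since the paper's detour repairs it immediately and yields the same bound $-\min_u g(x,u)$. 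Finally, the paper's $W$ is built explicitly via a Tietze extension of $W_{\mathrm{loc}}$ followed by $W = \min\{W_2, \min_u g\}$, which is more concrete than your envelope sketch.
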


\begin{proof}
In line with Item~1 of Proposition~\ref{Pro_17}, $ V(\cdot) $ is locally Lipschitz continuous in $ \mathcal{D}_0 $,
and it is differentiable almost everywhere in $ \mathcal{D}_0 $ due to Rademacher's theorem. The positive definiteness of
$ V(\cdot) $ directly follows from Proposition~\ref{Pro_14} and the positive definiteness of $ V_{\mathrm{loc}}(\cdot) $.

Let us show that $ V(\cdot) $ is proper. According to the relation~(\ref{Eq_22}) and Item~3 of Proposition~\ref{Pro_17},
it suffices to verify the properness of the restriction of $ V(\cdot) $ to $ \mathcal{D}_0 $. The continuity of the latter
implies that the preimages of closed sets are closed. Again due to Item~3 of Proposition~\ref{Pro_17}, the considered
restriction is also such that the preimages of bounded sets are bounded. One consequently obtains the compactness of
the preimages of compact sets, which means properness.

It remains to establish the infinitesimal decrease condition~(\ref{Eq_26}) with an appropriate function~$ W(\cdot) $.

Since the function~$ \: W_{\mathrm{loc}} \, \colon \, \Omega \to [0, +\infty) \: $ (introduced in Item~2 of
Assumption~\ref{Ass_7}) is continuous and the set~$ \Omega_c \subset \Omega $ is compact, Tietze's extension theorem
(see, e.\,g., \cite[Theorem~5.2.1]{Krantz1999}) ensures the existence of a continuous function
$ \: W_1 \colon \, \mathbb{R}^n \to \mathbb{R} \: $ satisfying $ \, W_1(x) = W_{\mathrm{loc}}(x) \, $ for all
$ x \in \Omega_c $. Bearing in mind also the positive definiteness of $ W_{\mathrm{loc}}(\cdot) $ and the compactness of
the boundary~$ l_c = \partial \Omega_c $ that does not contain $ 0_n $, one concludes
$ \: \min_{x \, \in \, l_c} \, W_{\mathrm{loc}}(x) \: > \: 0 $. Hence, the function
$$
W_2(x) \:\: \stackrel{\mathrm{def}}{=} \:\: \begin{cases}
W_{\mathrm{loc}}(x), & x \in \Omega_c, \\
\max \, \left\{ W_1(x), \,\, \min\limits_{\xi \, \in \, l_c} \, W_{\mathrm{loc}}(\xi) \right\}, &
x \, \in \, \mathbb{R}^n \setminus \Omega_c,
\end{cases}
$$
is continuous and positive definite. Now take
\begin{equation}
W(x) \:\, \stackrel{\mathrm{def}}{=} \:\, \min \, \left\{ W_2(x), \,\, \min_{u \, \in \, U} \, g(x, u) \right\} \quad
\forall x \in \mathcal{D}_0.  \label{Eq_27}
\end{equation}
The compactness of $ U $ and Items~1,\,3 of Assumption~\ref{Ass_13} yield that the function
$ \: G \, \ni \, x \: \longmapsto \: \min_{u \, \in \, U} \, g(x, u) \: $ is continuous everywhere in $ G $ and
positive for all $ \, x \, \in \, G \setminus \{ 0_n \} $. Thus, (\ref{Eq_27}) is a continuous and positive definite
function.

In order to establish the condition~(\ref{Eq_26}) with the selected $ W(\cdot) $, it suffices to verify this Dini form
for $ x \in \Omega_c $ and the related viscosity form for $ \, x \, \in \, \mathcal{D}_0 \setminus \Omega_c \, $
(recall Remark~\ref{Rem_6}).

For $ \, x \, \in \, \mathrm{int} \, \Omega_c, \, $ the inequality in (\ref{Eq_26}) holds due to Assumption~\ref{Ass_7}.
For $ \, x \, \in \, \mathcal{D}_0 \setminus \Omega_c, \, $ Item~2 of Proposition~\ref{Pro_17} implies the viscosity form
of the infinitesimal decrease condition:
$$
\begin{aligned}
& \max_{u \, \in \, U} \: \{ -\left< \zeta, \, f(x, u) \right> \} \:\, - \:\, W(x) \\
& \qquad
\geqslant \:\: \max_{u \, \in \, U} \: \{ -\left< \zeta, \, f(x, u) \right> \} \:\, - \:\,
\min_{u \, \in \, U} \, g(x, u) \\
& \qquad
\geqslant \:\: \max_{u \, \in \, U} \: \{ -\left< \zeta, \, f(x, u) \right> \: - \: g(x, u) \} \:\: \geqslant \:\: 0 \\
& \forall \, \zeta \, \in \, \mathrm{D}^- V(x).
\end{aligned}
$$
It therefore remains to prove the inequality in (\ref{Eq_26}) for $ \, x \, \in \, l_c = \partial \Omega_c $.

Let $ x \in l_c $. Due to the local Lipschitz continuity of $ V(\cdot) $ and $ V_{\mathrm{loc}}(\cdot) $ in
$ \mathcal{D}_0 $ and $ \Omega $, respectively, the following representations for the lower Dini derivatives hold
(see, e.\,g., \cite[Remark~3.1.4]{Cannarsa2004}):
\begin{equation}
\begin{aligned}
& \partial^- V(x; \zeta) \:\, = \:\, \liminf_{\lambda \, \to \, +0} \:
\frac{V(x + \lambda \zeta) \, - \, V(x)}{\lambda} \, , \\
& \partial^- V_{\mathrm{loc}}(x; \zeta) \:\, = \:\, \liminf_{\lambda \, \to \, +0} \:
\frac{V_{\mathrm{loc}}(x + \lambda \zeta) \, - \, V_{\mathrm{loc}}(x)}{\lambda} \\
& \forall \zeta \in \mathbb{R}^n.
\end{aligned}  \label{Eq_28}
\end{equation}
Introduce the control subset
$$
\begin{aligned}
U_x \:\: \stackrel{\mathrm{def}}{=} \:\: \left\{ u \in U \:\, \colon \:\, \mbox{there exists a sequence
$ \, \{ \lambda_k \}_{k = 1}^{\infty} \subset (0, +\infty) \, $ such that} {}^{{}^{{}^{{}^{}}}} \right. & \\
\left. \mbox{$ \lim\limits_{k \, \to \, \infty} \, \lambda_k \, = \, 0 \, $ and
$ \: x \, + \, \lambda_k \, f(x, u) \: \in \: \Omega_c \: $ for all $ k \in \mathbb{N} $} \right\}, &
\end{aligned}
$$
which is nonempty by virtue of Proposition~\ref{Pro_11}. With the help of Proposition~\ref{Pro_15} and
the property~(\ref{Eq_28}), one obtains
$$
\arraycolsep=1.5pt
\def\arraystretch{1.5}
\begin{array}{c}
\inf\limits_{u \, \in \, U_x} \: \partial^- V(x; \, f(x, u)) \:\, \leqslant \:\, 0, \quad
\inf\limits_{u \: \in \: U \, \setminus \, U_x} \: \partial^- V(x; \, f(x, u)) \:\, \geqslant \:\, 0, \\
\inf\limits_{u \, \in \, U_x} \: \partial^- V_{\mathrm{loc}}(x; \, f(x, u)) \:\, \leqslant \:\, 0, \quad
\inf\limits_{u \: \in \: U \, \setminus \, U_x} \: \partial^- V_{\mathrm{loc}}(x; \, f(x, u)) \:\, \geqslant \:\, 0,
\end{array}
$$
and
$$
\begin{aligned}
& \inf_{u \, \in \, U} \: \partial^- V(x; \, f(x, u)) \:\, = \:\,
\inf_{u \, \in \, U_x} \: \partial^- V(x; \, f(x, u)) \\
& \qquad
= \:\, \inf_{u \, \in \, U_x} \: \partial^- V_{\mathrm{loc}}(x; \, f(x, u)) \:\, = \:\,
\inf_{u \, \in \, U} \: \partial^- V_{\mathrm{loc}}(x; \, f(x, u)).
\end{aligned}
$$
Together with (\ref{Eq_3}), this leads to
$$
\inf_{u \, \in \, U} \: \partial^- V(x; \, f(x, u)) \:\, \leqslant \:\, -W_{\mathrm{loc}}(x)
\:\, \leqslant \:\, -W(x)
$$
and thereby completes the proof.
\end{proof}

\subsection{Investigation of the exit-time optimal control problem}

As was shown in the previous subsection, if one can find a suitable local CLF~$ V_{\mathrm{loc}}(\cdot) $ and the conditions of
Theorem~\ref{Thm_18} are fulfilled, the value function in the exit-time optimal control problem~(\ref{Eq_21}) extends
the local CLF outside the sublevel set~$ \Omega_c $, so that the resulting function~$ V(\cdot) $ becomes a global CLF
in the whole domain of asymptotic null-controllability~$ \mathcal{D}_0 $.

In order to verify the existence of optimal control strategies and to use necessary optimality conditions (Pontryagin's
principle) for the exit-time problem~(\ref{Eq_21}) with $ \, x_0 \, \in \, \mathcal{D}_0 \setminus \Omega_c, \, $ let us
reformulate it as
\begin{equation}
V(x_0) \:\: \stackrel{\mathrm{def}}{=} \:\: \inf_{\substack{u(\cdot) \, \in \, \mathcal{U}, \:\:
T \, \in \, [0, +\infty) \: \colon \\ x(T; \: x_0, \, u(\cdot)) \,\, \in \,\, \Omega_c}} \:
\left\{ \int\limits_0^T g(x(t; \, x_0, u(\cdot)), \: u(t)) \: \mathrm{d} t \:\, + \:\, c \right\} \quad
\forall \, x_0 \, \in \, G \setminus \Omega_c.  \label{Eq_29}
\end{equation}
It is easy to see that (\ref{Eq_21}) and (\ref{Eq_29}) are equivalent under Assumptions~\ref{Ass_1}, \ref{Ass_3},
\ref{Ass_7} and \ref{Ass_13}. Some additional conditions also need to be imposed.

\begin{assumption}  \label{Ass_19}
The set
$$
\left\{ (f(x, u), \, y) \: \in \: \mathbb{R}^n \times \mathbb{R} \: \colon \: u \in U, \:\: y \, \geqslant \, g(x, u) \right\}
$$
is convex for every~$ x \in \bar{G} $.
\end{assumption}

\begin{remark}  \label{Rem_20}  \rm
Assumption~\ref{Ass_19} strengthens Assumption~\ref{Ass_3}. One can easily verify that a sufficient condition for the fulfillment of
Assumption~\ref{Ass_19} is the convexity of the set
$$
\left\{ (f(x, u), \, g(x, u)) \: \in \: \mathbb{R}^n \times \mathbb{R} \: \colon \: u \in U \right\}
$$
for all~$ x \in \bar{G} $. \qed
\end{remark}

\begin{assumption}  \label{Ass_21}
The functions $ \: G \, \ni \, x \:\, \longmapsto \:\, f(x, u) \, \in \, \mathbb{R}^n \: $ and
$ \: G \, \ni \, x \:\, \longmapsto \:\, g(x, u) \, \in \, [0, +\infty) \: $ are continuously differentiable for
every~$ u \in U $.
\end{assumption}

\begin{theorem}  \label{Thm_22}
Let Assumptions~{\rm \ref{Ass_1}, \ref{Ass_7}, \ref{Ass_13}} and {\rm \ref{Ass_19}} hold. For any fixed initial
state~$ \, x_0 \, \in \, \mathcal{D}_0 \setminus \Omega_c, \, $ there exists an optimal control strategy for
the exit-time problem~{\rm (\ref{Eq_21})} or{\rm ,} equivalently{\rm ,} for~{\rm (\ref{Eq_29})}.
\end{theorem}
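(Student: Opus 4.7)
My plan is to apply a Filippov--Cesari-type existence argument to the reformulation (\ref{Eq_29}), viewed as a Mayer problem after augmenting the state with the scalar variable $y(t) = \int_0^t g(x(s),u(s))\, \mathrm{d}s$. Fix $x_0 \in \mathcal{D}_0 \setminus \Omega_c$; Propositions~\ref{Pro_14} and \ref{Pro_15} give $V(x_0) \in (c, +\infty)$, so a minimizing sequence $\{(u^{(k)}(\cdot), T^{(k)})\}_{k \in \mathbb{N}}$ with trajectories $x^{(k)}(\cdot) := x(\cdot; x_0, u^{(k)}(\cdot))$ and $x^{(k)}(T^{(k)}) \in \Omega_c$ exists, with cost values $J^{(k)} \to V(x_0)$. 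Since $T^{(k)}$ is the first hitting time of $\Omega_c$, the trajectory lies in $G \setminus \Omega_c$ on $[0, T^{(k)})$, and Item~4 of Assumption~\ref{Ass_13} yields $C_6\, T^{(k)} \leq J^{(k)} - c$, uniformly bounding $T^{(k)} \in [\mathcal{T}_{\Omega_c}(x_0), \bar T]$ for some $\bar T > 0$, where $\mathcal{T}_{\Omega_c}(x_0) > 0$ was shown in the proof of Proposition~\ref{Pro_14}.

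Next I would establish compactness of the trajectories. By Remark~\ref{Rem_2} and Item~5 of Assumption~\ref{Ass_1}, the reachable set from $x_0$ on $[0, \bar T]$ is contained in a compact subset $K \subset G$, so $\{x^{(k)}\}$ is uniformly bounded and, by continuity of $f$ and compactness of $U$, has uniformly bounded derivatives, hence is equicontinuous. The companion sequence $\{y^{(k)}\}$ is bounded and monotone with uniformly bounded derivative bound coming from the maximum of $g$ on $K \times U$. Along a subsequence (not relabeled) I get $T^{(k)} \to T^* \in [\mathcal{T}_{\Omega_c}(x_0), \bar T]$, $x^{(k)} \to x^*$ uniformly on $[0, \bar T]$ via Arzel\`a--Ascoli, and $y^{(k)} \to y^*$ uniformly to an absolutely continuous limit, with $y^*(T^*) \leq \liminf_k y^{(k)}(T^{(k)}) = V(x_0) - c$.

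The decisive step is the closure argument, and this is where Assumption~\ref{Ass_19} is indispensable. The augmented velocity multifunction
\[
F(x) \;:=\; \{(f(x,u),\, y) \in \mathbb{R}^n \times \mathbb{R} \,\colon\, u \in U,\; y \geq g(x,u)\}
\]
is convex by Assumption~\ref{Ass_19}, and is closed and upper semicontinuous in $x$ on $\bar G$ by continuity of $f,g$ and compactness of $U$. Since $(\dot x^{(k)}, \dot y^{(k)}) \in F(x^{(k)})$ almost everywhere, the classical Filippov--Cesari closure theorem (see, e.\,g., \cite{Cesari1983}) applied to the limiting pair yields $(\dot x^*(t), \dot y^*(t)) \in F(x^*(t))$ a.e. on $[0, T^*]$, and by a measurable selection one obtains $u^*(\cdot) \in \mathcal{U}$ with $\dot x^* = f(x^*, u^*)$ and $g(x^*, u^*) \leq \dot y^*$ a.e. Closedness of $\Omega_c$ together with uniform convergence and $T^{(k)} \to T^*$ forces $x^*(T^*) \in \Omega_c$, so $T_{\Omega_c}(x_0, u^*) \leq T^*$, and nonnegativity of $g$ gives
\[
\int_0^{T_{\Omega_c}(x_0, u^*)} g(x^*(t), u^*(t))\, \mathrm{d}t + c \;\leq\; y^*(T^*) + c \;\leq\; V(x_0),
\]
so $u^*$ is optimal.

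The main obstacle is precisely this closure step: without Assumption~\ref{Ass_19} one cannot simultaneously pass to the limit in the dynamics $\dot x^{(k)} = f(x^{(k)}, u^{(k)})$ and in the integral cost, since weak-$*$ limits of $f(x^{(k)}, u^{(k)})$ need not admit a representation $f(x^*, u^*)$ for any $u^* \in U$. Convexity of the augmented velocity--cost set is exactly the Filippov--Cesari hypothesis that makes the limiting inclusion $(\dot x^*, \dot y^*) \in F(x^*)$ preserved. A minor subtlety---possible early entry of $x^*$ into $\Omega_c$ before time $T^*$---is harmless because $g \geq 0$ only strengthens the cost estimate above.
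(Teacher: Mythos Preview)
Your proposal is correct and follows essentially the same route as the paper: bound the exit times via Item~4 of Assumption~\ref{Ass_13}, use forward completeness (Remark~\ref{Rem_2}) to confine the trajectories to a compact set, and then invoke the Filippov--Cesari existence theorem, for which Assumption~\ref{Ass_19} supplies the required convexity. The paper's version is simply more compressed---it establishes boundedness of the integral funnel and then cites \cite[\S 9.3]{Cesari1983} directly rather than unpacking the closure argument---and works with the formulation~(\ref{Eq_21}) so that the exit time is automatically the first hitting time, whereas in your sequence for~(\ref{Eq_29}) you should note that one may take $T^{(k)} = T_{\Omega_c}(x_0, u^{(k)}(\cdot))$ without loss of generality (since $g \geq 0$) before invoking the lower bound $g \geq C_6$ on $[0, T^{(k)})$.
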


\begin{proof}
Consider the optimal control problem~(\ref{Eq_21}) or (\ref{Eq_29}) with a fixed initial
state~$ \, x_0 \, \in \, \mathcal{D}_0 \setminus \Omega_c $. In line with Proposition~\ref{Pro_15}, one has
$ \, V(x_0) < +\infty $. Fix an arbitrary~$ \varepsilon > 0 $. By $ \mathcal{U}_{\varepsilon}(x_0) $,
denote the set of all $ u(\cdot) \in \mathcal{U} $ for which the cost is not greater than $ \, V(x_0) + \varepsilon $.
The control subclass~$ \mathcal{U}_{\varepsilon}(x_0) $ obviously contains a minimizing sequence. Recall also
the notation~(\ref{Eq_20}). By the definition of $ \mathcal{U}_{\varepsilon}(x_0) $, one has
$ \: T_{\Omega_c}(x_0, u(\cdot)) \, < \, +\infty \: $ for all $ \, u(\cdot) \, \in \, \mathcal{U}_{\varepsilon}(x_0) $.
If one proves that the integral funnel
\begin{equation}
\{ (t, \: x(t; \, x_0, u(\cdot))) \:\, \colon \:\, t \: \in \: [0, \, T_{\Omega_c}(x_0, u(\cdot))], \:\:\,
u(\cdot) \, \in \, \mathcal{U}_{\varepsilon}(x_0) \}  \label{Eq_30}
\end{equation}
is contained in some compact set~$ K \subset \mathbb{R}^{n + 1} $, then including the constraint that admissible
integral trajectories should lie in $ K $ will not change the infimum in the considered optimal control problem,
while this will allow for using the general existence theorem of \cite[\S 9.3]{Cesari1983}. Thus, it remains
to establish the boundedness of (\ref{Eq_30}). According to Remark~\ref{Rem_2}, it suffices to verify that
the set of exit times
$ \: \{ T_{\Omega_c}(x_0, u(\cdot)) \: \colon \: u(\cdot) \, \in \, \mathcal{U}_{\varepsilon}(x_0) \} \: $
is bounded. Due to the definition of $ \mathcal{U}_{\varepsilon}(x_0) $ and Item~4 of Assumption~\ref{Ass_13},
any $ \, u(\cdot) \, \in \, \mathcal{U}_{\varepsilon}(x_0) \, $ satisfies
$$
C_6 \: T_{\Omega_c}(x_0, \, u(\cdot)) \:\, + \:\, c \:\: \leqslant \:\:
\int\limits_0^{T_{\Omega_c}(x_0, \, u(\cdot))} g(x(t; \, x_0, u(\cdot)), \: u(t)) \: \mathrm{d} t \:\, + \:\, c \:\:
\leqslant \:\: V(x_0) \, + \, \varepsilon
$$
with a constant~$ C_6 > 0 $, which leads to the estimate
$$
T_{\Omega_c}(x_0, \, u(\cdot)) \: \leqslant \: \frac{V(x_0) \, + \, \varepsilon \, - \, c}{C_6}
$$
and therefore completes the proof.
\end{proof}

\begin{theorem}{\rm (Pontryagin's principle; see, e.\,g.,
					 \cite[\S 5.1, \S 4.2 (emphasize Remark~10), \S 4.4.B]{Cesari1983},
					 \cite[\S 2.4]{Geering2007})}  \label{Thm_23}
Let Assumptions~{\rm \ref{Ass_1}, \ref{Ass_7}, \ref{Ass_13}, \ref{Ass_19}} and {\rm \ref{Ass_21}} hold.
Consider an optimal control strategy~$ u^*(\cdot) \in \mathcal{U} $ in the exit-time
problem~{\rm (\ref{Eq_21})} or{\rm ,} equivalently{\rm ,} in {\rm (\ref{Eq_29})} for a fixed initial
state~$ \, x_0 \, \in \, \mathcal{D}_0 \setminus \Omega_c $. Denote
$ \: T^* \, \stackrel{\mathrm{def}}{=} \, T_{\Omega_c}(x_0, \, u^*(\cdot)) \, < \, +\infty, \: $ and let
$$
[0, T^*] \: \ni \: t \:\: \longmapsto \:\: x^*(t) \: \stackrel{\mathrm{def}}{=} \:
x(t; \, x_0, u^*(\cdot)) \: \in \: G
$$
be the corresponding optimal state trajectory. Moreover{\rm ,} introduce the Hamiltonian{\rm :}
\begin{equation}
\begin{aligned}
& H(x, u, p, \tilde{p}) \:\, \stackrel{\mathrm{def}}{=} \:\, \left< p, f(x, u) \right> \: + \: \tilde{p} \, g(x, u), \\
& \mathcal{H}(x, p, \tilde{p}) \:\, \stackrel{\mathrm{def}}{=} \:\, \min_{u' \, \in \, U} \, H(x, u', p, \tilde{p}) \\
& \forall \: (x, u, p, \tilde{p}) \: \in \: G \times U \times \mathbb{R}^n \times \mathbb{R}.
\end{aligned}  \label{Eq_31}
\end{equation}
Then there exist a function $ \: p^* \colon \, [0, T^*] \to \mathbb{R}^n \: $ and
a constant~$ \tilde{p}^* \geqslant 0 $ such that the following properties hold{\rm :}
\begin{itemize}
\setlength\itemsep{0em}
\item  $ (p^*(t), \tilde{p}^*) \, \neq \, 0_{n + 1} \: $ for every $ t \in [0, T^*] ${\rm ;}
\item  $ (x^*(\cdot), p^*(\cdot)) \, $ is an absolutely continuous solution of the characteristic boundary value problem
\begin{equation}
\left\{ \begin{aligned}
& \dot{x^*}(t) \:\: = \:\: \mathrm{D}_p H(x^*(t), \, u^*(t), \, p^*(t), \, \tilde{p}^*) \:\: = \:\:
f(x^*(t), \, u^*(t)), \\
& \dot{p^*}(t) \:\: = \:\: -\mathrm{D}_x H(x^*(t), \, u^*(t), \, p^*(t), \, \tilde{p}^*) \\
& \qquad \:\:
= \:\: -(\mathrm{D}_x f(x^*(t), \, u^*(t)))^{\top} \: p^*(t) \:\, - \:\,
\tilde{p}^* \: \mathrm{D}_x g(x^*(t), \, u^*(t)), \\
& t \, \in \, [0, T^*], \\
& x^*(0) \, = \, x_0, \\
& x^*(T^*) \, \in \, l_c, \quad p^*(T^*) \, \in \, \mathrm{N}(x^*(T^*); \, \Omega_c)
\end{aligned} \right.  \label{Eq_32}
\end{equation}
{\rm (}the notation for normal cones was described in the introduction{\rm );}
\item  the Hamiltonian minimum condition
\begin{equation}
H(x^*(t), \, u^*(t), \, p^*(t), \, \tilde{p}^*) \:\, = \:\, \mathcal{H}(x^*(t), \, p^*(t), \, \tilde{p}^*)
\label{Eq_33}
\end{equation}
is satisfied for almost all $ \, t \in [0, T^*] \, $ {\rm (}with respect to the Lebesgue measure in $ \mathbb{R} ${\rm );}
\item  the Hamiltonian vanishes along the optimal characteristic trajectory{\rm ,} i.\,e.{\rm ,}
\begin{equation}
\mathcal{H}(x^*(t), \, p^*(t), \, \tilde{p}^*) \: \equiv \: 0 \quad \forall t \in [0, T^*].  \label{Eq_34}
\end{equation}
\end{itemize}
\end{theorem}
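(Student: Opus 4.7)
The plan is to reduce the statement to a standard free-terminal-time Pontryagin principle, such as the one in \cite[\S\S 5.1, 4.2, 4.4.B]{Cesari1983}. Theorem~\ref{Thm_22} already provides the existence of an optimal pair $(u^*(\cdot), T^*)$, so only a verification of hypotheses and a translation of conclusions are needed.

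First I would verify the data assumptions required by the classical theorem. The $C^1$ dependence of $f$ and $g$ on $x$ from Assumption~\ref{Ass_21} makes the Hamiltonian $C^1$ in $x$; the local Lipschitz regularity from Item~4 of Assumption~\ref{Ass_1} and Item~2 of Assumption~\ref{Ass_13} guarantees that the adjoint equation is well posed; and Assumption~\ref{Ass_19} supplies the convexity of the augmented orientor field $\{(f(x,u), y) : u \in U,\, y \geq g(x,u)\}$ that underpins the minimum principle in Cesari's framework. Once (\ref{Eq_29}) is viewed as a free-time problem with autonomous dynamics, autonomous running cost $g$, constant terminal cost $c$, and terminal constraint $x(T) \in \Omega_c$, the classical multiplier rule yields an absolutely continuous costate $p^*(\cdot)$ and a nonnegative constant $\tilde p^*$ satisfying the non-triviality $(p^*(t), \tilde p^*) \neq 0_{n+1}$, the canonical system (\ref{Eq_32}), and the pointwise minimum condition (\ref{Eq_33}) almost everywhere. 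The sign $\tilde p^* \geq 0$ is standard for a minimization problem with a nonnegative running cost.

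Next, I would handle the two nonstandard endpoint features. From (\ref{Eq_20}) and the continuity of $x^*(\cdot)$, the time $T^*$ is the first entry into the closed set $\Omega_c$, hence $x^*(T^*) \in \partial \Omega_c = l_c$. The transversality condition $p^*(T^*) \in \mathrm{N}(x^*(T^*);\Omega_c)$ then reads off Cesari's formulation with a closed target set \cite[\S 4.4.B, and Remark~10 of \S 4.2]{Cesari1983}, where the Clarke/limiting normal cone replaces the classical tangent-space normal; the constant terminal cost $c$ contributes nothing to this condition. The piecewise regularity of $l_c$ from Item~4 of Assumption~\ref{Ass_7} guarantees that $\mathrm{N}(x^*(T^*);\Omega_c)$ is nontrivial, so the transversality is a genuine restriction on $p^*(T^*)$ and, together with non-triviality, is compatible with the possibility that $\tilde p^* = 0$.

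Finally, the Hamiltonian-vanishing identity (\ref{Eq_34}) is a combined consequence of autonomy and free terminal time. Along the optimal characteristic, $t \mapsto \mathcal{H}(x^*(t), p^*(t), \tilde p^*)$ is absolutely continuous and its derivative equals the partial derivative of $H$ in $t$, which vanishes because the data depend on time only through $(x,u)$; hence $\mathcal{H}$ is constant on $[0,T^*]$. The variational transversality in the free-time parameter then forces this constant to be $0$, giving (\ref{Eq_34}). The principal obstacle throughout is that the target $\Omega_c$ is only piecewise regular, which prevents a direct appeal to the smooth-manifold formulation of PMP and requires the generalized version with a nonsmooth normal cone; once that version is adopted, all conclusions of the theorem follow.
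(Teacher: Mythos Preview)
The paper does not supply its own proof of this theorem; it simply cites the classical references \cite[\S 5.1, \S 4.2 (emphasize Remark~10), \S 4.4.B]{Cesari1983} and \cite[\S 2.4]{Geering2007} in the theorem heading and moves on. Your proposal is exactly in that spirit---you identify which hypotheses of Cesari's free-terminal-time principle are secured by Assumptions~\ref{Ass_1}, \ref{Ass_7}, \ref{Ass_13}, \ref{Ass_19}, \ref{Ass_21}, explain why the terminal constraint produces the normal-cone transversality condition, and recover the Hamiltonian-vanishing identity from autonomy plus the free-time transversality---so your approach coincides with the paper's (implicit) one.
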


\begin{remark}  \label{Rem_24}  \rm
Since the Hamiltonian~(\ref{Eq_31}) is positive homogeneous of degree~$ 1 $ with respect to $ (p, \tilde{p}) $,
it suffices to consider only the two cases~$ \tilde{p}^* = 0  $ and $ \tilde{p}^* = 1 $ in Theorem~\ref{Thm_23}.
The case~$ \tilde{p}^* = 0 $ is called abnormal. \qed
\end{remark}

For handling the infinite value~$ +\infty $, consider the Kruzhkov transformed function
\begin{equation}
v(x_0) \:\, \stackrel{\mathrm{def}}{=} \:\, 1 \, - \, e^{-V(x_0)} \:\, \in \:\, [0, 1] \quad \forall x_0 \in G
\label{Eq_35}
\end{equation}
with the convention $ \, e^{-(+\infty)} \stackrel{\mathrm{def}}{=} 0 $. Note that the function
$ \: \mathbb{R} \ni \xi \: \longmapsto \: 1 - e^{-\xi} \: $ vanishes at $ \xi = 0 $, tends to $ 1 $ as $ \xi \to +\infty $,
strictly increases, and is infinitely differentiable.

\begin{theorem}  \label{Thm_25}
Let Assumptions~{\rm \ref{Ass_1}, \ref{Ass_3}, \ref{Ass_7}} and {\rm \ref{Ass_13}} hold{\rm ,} and consider
the functions~$ V(\cdot) ${\rm ,} $ v(\cdot) $ defined by {\rm (\ref{Eq_21}), (\ref{Eq_25}), (\ref{Eq_35})}.
The domain of asymptotic null-controllability can be represented as
$$
\mathcal{D}_0 \:\: = \:\: \{ x_0 \in G \: \colon \: V(x_0) \, < \, +\infty \} \:\: = \:\:
\{ x_0 \in G \: \colon \: v(x_0) \, < \, 1 \}.
$$
\end{theorem}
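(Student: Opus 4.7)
The plan is to establish the two set equalities separately. The second equality
$\{x_0 \in G \colon V(x_0) < +\infty\} = \{x_0 \in G \colon v(x_0) < 1\}$
is essentially tautological: since the map $\xi \mapsto 1 - e^{-\xi}$ is strictly increasing on $[0,+\infty]$ (with the convention $e^{-(+\infty)} = 0$ giving value $1$) and vanishes exactly at $\xi = 0$, the condition $v(x_0) = 1 - e^{-V(x_0)} < 1$ is equivalent to $V(x_0) < +\infty$. So this equality requires only invoking the definition (\ref{Eq_35}).

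For the inclusion $\mathcal{D}_0 \subseteq \{x_0 \in G \colon V(x_0) < +\infty\}$, I would simply appeal to Proposition~\ref{Pro_15}, which already asserts $V(x_0) < +\infty$ for every $x_0 \in \mathcal{D}_0$. No further work is needed here.

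The substantive step is the reverse inclusion $\{x_0 \in G \colon V(x_0) < +\infty\} \subseteq \mathcal{D}_0$, which I would prove by a concatenation argument. First, if $x_0 \in \Omega_c$, then $x_0 \in \mathcal{D}_0$ by Remark~\ref{Rem_9}. Otherwise $x_0 \in G \setminus \Omega_c$ with $V(x_0) < +\infty$; then the infimum in the definition~(\ref{Eq_21}) is taken over a nonempty set, so there exists $u_1(\cdot) \in \mathcal{U}$ with finite exit time $T_1 := T_{\Omega_c}(x_0, u_1(\cdot)) < +\infty$. Set $x_1 := x(T_1; x_0, u_1(\cdot)) \in \Omega_c \subseteq \mathcal{D}_0$ (again by Remark~\ref{Rem_9}). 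By Definition~\ref{Def_4}, there exists $u_2(\cdot) \in \mathcal{U}$ such that $\lim_{t \to +\infty} \|x(t; x_1, u_2(\cdot))\| = 0$. Define the concatenation
\[
u(t) \:\stackrel{\mathrm{def}}{=}\: \begin{cases} u_1(t), & t \in [0, T_1), \\ u_2(t - T_1), & t \geqslant T_1. \end{cases}
\]
Then $u(\cdot) \in \mathcal{U}$, and by uniqueness of solutions to (\ref{Eq_1}) one has $x(t; x_0, u(\cdot)) = x(t - T_1; x_1, u_2(\cdot))$ for $t \geqslant T_1$, whence $\lim_{t \to +\infty} \|x(t; x_0, u(\cdot))\| = 0$, so $x_0 \in \mathcal{D}_0$.

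The main potential pitfall is simply the bookkeeping in the concatenation: checking that $u(\cdot)$ lies in the class $\mathcal{U} = L^{\infty}_{\mathrm{loc}}([0,+\infty), U)$ (which is immediate since each piece is locally essentially bounded in $U$) and that the trajectory starting from $x_0$ coincides past time $T_1$ with the trajectory restarted from $x_1$. Given the uniqueness and extendability guaranteed by Remark~\ref{Rem_2} under Assumption~\ref{Ass_1}, this is straightforward; no deeper tool is required, and the theorem follows.
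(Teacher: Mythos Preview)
Your proof is correct and essentially follows the paper's approach. The paper's own proof is a one-liner invoking Proposition~\ref{Pro_15}, which already records \emph{both} implications: $V(x_0) < +\infty$ for all $x_0 \in \mathcal{D}_0$ and $V(x_0) = +\infty$ for all $x_0 \in G \setminus \mathcal{D}_0$. Your explicit concatenation argument for the reverse inclusion is precisely the reasoning hidden behind Proposition~\ref{Pro_14} (on which Proposition~\ref{Pro_15} rests), so you are re-deriving something the paper has already packaged; had you read the second line of Proposition~\ref{Pro_15} you could have cited it for both directions and been done.
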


\begin{proof}
It suffices to recall Proposition~\ref{Pro_15}.
\end{proof}

Introduce also the set-valued extremal control map:
\begin{equation}
U^*(x, p, \tilde{p}) \:\, \stackrel{\mathrm{def}}{=} \:\, \mathrm{Arg} \min_{u \, \in \, U} \,
H(x, u, p, \tilde{p}) \quad
\forall \: (x, p, \tilde{p}) \: \in \: G \times \mathbb{R}^n \times \mathbb{R}.  \label{Eq_36}
\end{equation}

As was discussed in \cite{YegorovDower2017,YegorovDowerGrune2018_1,YegorovDowerGrune2018_2}, characteristic
boundary value problems, such as (\ref{Eq_32}), may admit multiple solutions, some of which may not be optimal,
and it is therefore relevant to parametrize the characteristic fields with respect to the extended initial adjoint
vector ($ (p_0, \tilde{p}^*) $ in case of (\ref{Eq_32})) and to solve the related Cauchy problems. Solutions of
the latter are unique if, for example, the absence of the abnormal case~$ \tilde{p}^* = 0 $ is verified and
the running cost is regularized by adding an appropriate control-dependent term, so that the extremal control map
takes only singleton values along the characteristic trajectories.

Taking that into account, the next theorem reduces the computation of the transformed value function~(\ref{Eq_35}) at
any selected state~$ \, x_0 \, \in \, \mathcal{D}_0 \setminus \Omega_c \, $ to a finite-dimensional optimization problem with
respect to the unknown initial data~$ (p_0, \tilde{p}^*) $ for the characteristic system. For certain classes of
optimal control problems with fixed finite horizons and free terminal states, some related techniques were previously
proposed and tested in \cite{DarbonOsher2016,ChowDarbonOsherYin2018,YegorovDower2017}. Theoretical results regarding
the construction of global CLFs via exit-time optimal control and Pontryagin's characteristics were initially formulated in
the conference papers~\cite{YegorovDowerGrune2018_1,YegorovDowerGrune2018_2}, while the current work provides their
extension with detailed proofs, discussions, and practical developments.

\begin{theorem}  \label{Thm_26}
Let Assumptions~{\rm \ref{Ass_1}, \ref{Ass_7}, \ref{Ass_13}, \ref{Ass_19}} and {\rm \ref{Ass_21}} hold. For any
initial state~$ \, x_0 \, \in \, \mathcal{D}_0 \setminus \Omega_c, \, $ the Kruzhkov transformed value~$ v(x_0) $
defined by {\rm (\ref{Eq_20}),~(\ref{Eq_21}),~(\ref{Eq_35})} is the minimum of
\begin{equation}
1 \:\: - \:\: \exp \left\{ -\int\limits_0^{T_{\Omega_c}(x_0, \, u^*(\cdot))} g(x^*(t), \, u^*(t)) \: \mathrm{d} t \:\, - \:\, c \right\}
\label{Eq_37}
\end{equation}
over the solutions of the characteristic Cauchy problems
\begin{equation}
\left\{ \begin{aligned}
& \dot{x^*}(t) \:\: = \:\: \mathrm{D}_p H(x^*(t), \, u^*(t), \, p^*(t), \, \tilde{p}^*) \:\: = \:\:
f(x^*(t), \, u^*(t)), \\
& \dot{p^*}(t) \:\: = \:\: -\mathrm{D}_x H(x^*(t), \, u^*(t), \, p^*(t), \, \tilde{p}^*) \\
& \qquad \:\:
= \:\: -(\mathrm{D}_x f(x^*(t), \, u^*(t)))^{\top} \: p^*(t) \:\, - \:\,
\tilde{p}^* \: \mathrm{D}_x g(x^*(t), \, u^*(t)), \\
& u^*(t) \: \in \: U^*(x^*(t), \, p^*(t), \, \tilde{p}^*), \\
& t \:\, \in \:\, I(x_0, \, u^*(\cdot)) \:\, \stackrel{\mathrm{def}}{=} \:\, \begin{cases}
[0, \, T_{\Omega_c}(x_0, \, u^*(\cdot))], & T_{\Omega_c}(x_0, \, u^*(\cdot)) \: < \: +\infty, \\
[0, +\infty), & T_{\Omega_c}(x_0, \, u^*(\cdot)) \: = \: +\infty,
\end{cases} \\
& x^*(0) \, = \, x_0, \quad p^*(0) \, = \, p_0,
\end{aligned} \right.  \label{Eq_38}
\end{equation}
for all extended initial adjoint vectors
\begin{equation}
(p_0, \tilde{p}^*) \:\, \in \:\, \{ (p, \tilde{p}) \: \colon \: p \in \mathbb{R}^n, \:\: \tilde{p} \in \{ 0, 1 \} \}.
\label{Eq_39}
\end{equation}
Moreover{\rm ,} the same value is obtained when minimizing over the bounded set
\begin{equation}
(p_0, \tilde{p}^*) \:\, \in \:\, \{ (p, \tilde{p}) \: \in \: \mathbb{R}^n \times \mathbb{R} \: \colon \:
\| (p, \tilde{p}) \| \: = \: 1, \:\:\, \tilde{p} \geqslant 0 \},  \label{Eq_40}
\end{equation}
or even over its subset
\begin{equation}
(p_0, \tilde{p}^*) \:\, \in \:\, \{ (p, \tilde{p}) \: \in \: \mathbb{R}^n \times \mathbb{R} \: \colon \:
\| (p, \tilde{p}) \| \: = \: 1, \:\:\, \tilde{p} \geqslant 0, \:\:\,
\mathcal{H}(x_0, p, \tilde{p}) \: = \: 0 \}.  \label{Eq_41}
\end{equation}
\end{theorem}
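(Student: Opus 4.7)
The plan is to prove the equality by sandwich and then apply two further reductions. First I would establish that $v(x_0)$ is no larger than the minimum over (\ref{Eq_39}) by exhibiting a particular characteristic that attains $v(x_0)$; second I would establish the reverse inequality by showing that every characteristic yields a value at least $v(x_0)$; third, homogeneity of the Hamiltonian in $(p,\tilde{p})$ will reduce the admissible initial adjoint data to the sphere-slice (\ref{Eq_40}); and fourth, the constancy of $\mathcal{H}$ along characteristics will permit the additional Hamiltonian-vanishing constraint in (\ref{Eq_41}).

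For attainment, Theorem \ref{Thm_22} produces an optimal $u^*(\cdot)\in\mathcal{U}$ with finite exit time $T^*=T_{\Omega_c}(x_0,u^*(\cdot))$, and Theorem \ref{Thm_23} together with Remark \ref{Rem_24} supplies $p^*(\cdot)$ and $\tilde{p}^*\in\{0,1\}$ such that $(x^*,p^*,u^*)$ satisfies the Hamiltonian system and the minimum principle $u^*(t)\in U^*(x^*(t),p^*(t),\tilde{p}^*)$ almost everywhere. Setting $p_0:=p^*(0)$, this triple is a solution of (\ref{Eq_38}) associated to a parameter vector from (\ref{Eq_39}), and its value of (\ref{Eq_37}) equals $1-e^{-V(x_0)}=v(x_0)$.

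For the lower bound, fix any solution $(x^*,p^*,u^*)$ of (\ref{Eq_38}) with parameters in (\ref{Eq_39}). Since $u^*(\cdot)$ is a measurable selection from the set-valued map $t\mapsto U^*(x^*(t),p^*(t),\tilde{p}^*)\subseteq U$, it belongs to $\mathcal{U}$ and is admissible for (\ref{Eq_21}). If $T_{\Omega_c}(x_0,u^*(\cdot))<+\infty$, the cost of $u^*(\cdot)$ is no less than $V(x_0)$ by (\ref{Eq_21}), whence (\ref{Eq_37}) $\geqslant 1-e^{-V(x_0)}=v(x_0)$. If instead $T_{\Omega_c}(x_0,u^*(\cdot))=+\infty$, then $x^*(t)\notin\mathrm{int}\,\Omega_c$ for every $t\geqslant 0$, so Item~4 of Assumption \ref{Ass_13} gives $g(x^*(t),u^*(t))\geqslant C_6>0$ along the entire trajectory, the improper integral in (\ref{Eq_37}) diverges to $+\infty$, and (\ref{Eq_37}) evaluates to $1\geqslant v(x_0)$.

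The reduction to (\ref{Eq_40}) uses that $H$ is positive homogeneous of degree one in $(p,\tilde{p})$: replacing $(p_0,\tilde{p}^*)$ by $\lambda(p_0,\tilde{p}^*)$ for any $\lambda>0$ rescales $p^*(\cdot)$ by $\lambda$, leaves $U^*$ unchanged, and therefore preserves $x^*(\cdot)$, $u^*(\cdot)$, and the value of (\ref{Eq_37}); the sign constraint $\tilde{p}^*\geqslant 0$ from Pontryagin's principle then allows normalization to $\|(p_0,\tilde{p}^*)\|=1$. For the further reduction to (\ref{Eq_41}), the autonomous nature of the characteristic system implies that $\mathcal{H}(x^*(\cdot),p^*(\cdot),\tilde{p}^*)$ is constant along any characteristic, so by (\ref{Eq_34}) the optimal characteristic identified above satisfies $\mathcal{H}(x_0,p_0,\tilde{p}^*)=0$ and thus lies in (\ref{Eq_41}); the minimum over this smaller subset is therefore attained at $v(x_0)$ and, by the lower-bound step applied directly to (\ref{Eq_41}), no smaller than $v(x_0)$. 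The main technical obstacle I anticipate is the lower-bound step: one must confirm the existence and global extendability of measurable selections $u^*(\cdot)\in U^*(x^*(\cdot),p^*(\cdot),\tilde{p}^*)$ (via compactness of $U$, closed-graph properties of $U^*$, and Items~4--5 of Assumption \ref{Ass_1}), and check carefully that the case $T_{\Omega_c}=+\infty$ indeed forces divergence of the running-cost integral; the homogeneity and Hamiltonian-conservation arguments are standard once these integrability issues are settled.
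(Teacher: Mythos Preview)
Your proposal is correct and follows essentially the same route as the paper's own proof: existence of an optimal control (Theorem~\ref{Thm_22}) plus Pontryagin's principle (Theorem~\ref{Thm_23}) and the normalization of Remark~\ref{Rem_24} give a characteristic in the family~(\ref{Eq_38}),~(\ref{Eq_39}) attaining $v(x_0)$, while admissibility of every characteristic control yields the reverse inequality; homogeneity of $H$ in $(p,\tilde p)$ then gives the reduction to~(\ref{Eq_40}), and the vanishing condition~(\ref{Eq_34}) the further reduction to~(\ref{Eq_41}). Your write-up is in fact more explicit than the paper's, which compresses the lower bound into the phrase ``the Cauchy problems generate a wider characteristic field'' and does not spell out the divergence argument for $T_{\Omega_c}=+\infty$ that you correctly supply via Item~4 of Assumption~\ref{Ass_13}.
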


\begin{proof}
The first statement directly follows from Theorems~\ref{Thm_22}, \ref{Thm_23}, Remark~\ref{Rem_24} and
the fact that, compared to the boundary value problems~(\ref{Eq_32}), (\ref{Eq_33}), the Cauchy
problems~(\ref{Eq_38}),~(\ref{Eq_39}) generate a wider characteristic field (due to the absence of
the transversality condition on the terminal adjoint vector).

Since the Hamiltonian is positive homogeneous of degree~$ 1 $ with respect to $ \left( p, \tilde{p} \right) $,
the extremal control map~(\ref{Eq_36}) satisfies
$$
U^* \left( x, p, \tilde{p} \right) \:\, = \:\, U^* \left( x, \, \lambda \, p, \, \lambda \, \tilde{p} \right) \quad
\forall \lambda > 0 \quad \forall \: \left( x, p, \tilde{p} \right) \: \in \:
G \times \mathbb{R}^n \times \mathbb{R},
$$
and the state components of the characteristic trajectories do not change after multiplying
$ \left( p_0, \tilde{p}^* \right) $ by any positive number. Together with the Hamiltonian vanishing condition~(\ref{Eq_34})
in Theorem~\ref{Thm_23}, this yields the second statement.
\end{proof}

Besides, let us separately formulate the well-known Hamiltonian conservation property as applied to (\ref{Eq_38}). For convenience,
its proof is given in Subsection~A.1.2 of \hyperlink{Online_App}
{the appendix}.

\begin{proposition}  \label{Pro_27}
Under the conditions of Theorem~{\rm \ref{Thm_26},} the Hamiltonian is conserved along any solution of
the characteristic Cauchy problem~{\rm (\ref{Eq_38})} with
$ \: (x_0, p_0, \tilde{p}^*) \: \in \: G \, \times \, \mathbb{R}^n \, \times \, [0, +\infty) $.
\end{proposition}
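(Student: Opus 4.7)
The plan is to let $\varphi(t) := \mathcal{H}(x^*(t), p^*(t), \tilde{p}^*)$ along any fixed solution of the characteristic Cauchy problem~(\ref{Eq_38}) and show that $\varphi$ is constant by establishing (i) absolute continuity of $\varphi$ and (ii) $\dot\varphi = 0$ almost everywhere. For (i), Assumption~\ref{Ass_21} makes $H$ jointly $C^1$ in $(x,p)$, and compactness of $U$ yields, via a standard envelope argument, local Lipschitz continuity of $\mathcal{H}$ in $(x,p)$; composing with the absolutely continuous curves $x^*(\cdot), p^*(\cdot)$ furnished by (\ref{Eq_38}) gives AC of $\varphi$. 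Note also that by the inclusion $u^*(t) \in U^*(x^*(t), p^*(t), \tilde{p}^*)$, we have $\varphi(t) = H(x^*(t), u^*(t), p^*(t), \tilde{p}^*)$ for a.e.\ $t$.

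The core envelope-type computation is as follows. Fix a Lebesgue point $t$ of $u^*(\cdot)$ at which $x^*, p^*$ are differentiable. For any $h > 0$, since the constant control $u^*(t)$ is admissible at time $t+h$,
\begin{equation*}
\varphi(t+h) \: \leqslant \: H(x^*(t+h), \, u^*(t), \, p^*(t+h), \, \tilde{p}^*),
\end{equation*}
while $\varphi(t) = H(x^*(t), u^*(t), p^*(t), \tilde{p}^*)$. Subtracting, dividing by $h$, and using the chain rule together with the characteristic equations $\dot{x}^*(t) = D_p H(x^*(t), u^*(t), p^*(t), \tilde{p}^*)$ and $\dot{p}^*(t) = -D_x H(x^*(t), u^*(t), p^*(t), \tilde{p}^*)$, I would take $h \to 0^+$ to bound the upper right Dini derivative of $\varphi$ at $t$ by
\begin{equation*}
\langle D_x H, D_p H\rangle \, - \, \langle D_p H, D_x H\rangle \: = \: 0,
\end{equation*}
all partial derivatives evaluated at $(x^*(t), u^*(t), p^*(t), \tilde{p}^*)$. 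A symmetric argument with $h < 0$ (reversing the inequality when dividing by a negative number) produces a lower bound of $0$ on the lower left Dini derivative. Since $\varphi$ is AC and therefore differentiable a.e., these one-sided bounds force $\dot\varphi(t) = 0$ at a.e.\ $t \in I(x_0, u^*(\cdot))$, whence $\varphi \equiv \varphi(0)$.

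The main obstacle is the technical one of justifying the differentiation step despite $u^*(\cdot)$ being merely measurable: one needs that freezing $\bar u = u^*(t)$ yields a function $s \mapsto H(x^*(s), \bar u, p^*(s), \tilde{p}^*)$ whose derivative at $s = t$ matches what the characteristic ODEs predict. This is handled by the $C^1$-in-$(x,p)$ smoothness of $H$ (Assumption~\ref{Ass_21}) together with the absolute continuity of $x^*, p^*$. I would also remark that in the case $T_{\Omega_c}(x_0, u^*(\cdot)) = +\infty$, the argument is unaffected since it is purely local in time and applies on every compact subinterval of $[0, +\infty)$.
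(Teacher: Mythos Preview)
Your proof is correct and follows essentially the same two-step structure as the paper's argument: establish absolute continuity of $\varphi(t)=\mathcal{H}(x^*(t),p^*(t),\tilde p^*)$ via local Lipschitz continuity of $\mathcal{H}$ in $(x,p)$ and absolute continuity of $(x^*,p^*)$, then show $\dot\varphi=0$ almost everywhere. The only difference is that the paper obtains $\dot\varphi=0$ by invoking a Danskin-type result on directional derivatives of minimum functions (from Demyanov--Rubinov), whereas you carry out the envelope inequality by hand with one-sided Dini derivatives; your explicit computation is in effect a proof of that cited lemma in the particular case needed here.
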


Theorems~\ref{Thm_18}, \ref{Thm_22}, \ref{Thm_23}, \ref{Thm_25} and \ref{Thm_26} form the theoretical basis of
a curse-of-dimensionality-free approach to approximating global CLFs and feedback stabilization. A number of related
practical aspects are discussed in Section~4 below and also in Section~A.2 of \hyperlink{Online_App}
{the appendix},
while the next section modifies the theoretical constructions of the current section for the case when an appropriate local
CLF is not available.

\section{Approximation of a global CLF in case when an appropriate local CLF is not available}

If linearization based techniques for building quadratic local CLFs (see Subsection~A.2.1 of
\hyperlink{Online_App}
{the appendix}) cannot be applied to a particular system, it may be very difficult to obtain
a suitable local CLF. Even if the conditions of Theorems~\ref{Thm_18}, \ref{Thm_22}, \ref{Thm_25} and \ref{Thm_26} hold with
an explicitly found nonsmooth local CLF and some software implementation of a direct approximation method (e.\,g., one of
the toolkits mentioned in Subsection~A.2.3 of \hyperlink{Online_App}
{the appendix}) can be launched for
the corresponding exit-time optimal open-loop control problems, the nonsmoothness may still cause significant numerical issues.
This section describes the theoretical constructions that were previously introduced in the conference
paper~\cite{YegorovDowerGrune2018_1} and could help to approximate global CLFs for some classes of nonlinear control systems
without using any local CLFs. Here we also discuss the proofs omitted in \cite{YegorovDowerGrune2018_1} and provide a qualitative
comparison with the constructions of Section~2.

Let us consider the control system~(\ref{Eq_1}) and indicate the required assumptions.

Since it is not asserted that a suitable local CLF can be obtained, some new conditions on the running
cost~$ g(\cdot, \cdot) $ have to be imposed (they do not appear in Section~2). Furthermore, it is convenient
to assume the boundedness of the set of pointwise control constraints~$ U $ from the very beginning
(in \cite{YegorovDowerGrune2018_1}, this was supposed just after Proposition~2.8 stating that
the region of asymptotic null-controllability~$ \mathcal{D}_0 $ is an open domain, but before Assumption~2.12
introducing the running cost).

First, Assumption~\ref{Ass_1} is adopted. It is also supposed that a local asymptotic null-controllability property
holds in a weak or strong form as follows (see \cite[Section~2]{CamilliGruneWirth2008}).

\begin{assumption}  \label{Ass_32}
$ 0_m \in U ${\rm ,} $ f(0_n, 0_m) = 0_n ${\rm ,} and one of the following two conditions holds {\rm (}the second
condition strengthens the first one and is called the small control property{\rm ):}
\begin{list}{\rm \arabic{count})}%
{\usecounter{count}}
\setlength\itemsep{0em}
\item  there exist positive constants~$ r, \bar{u} $ and
a function~$ \, \beta(\cdot, \cdot) \in \mathcal{K} \mathcal{L} \, $ such that $ \bar{\mathrm{B}}_r(0_n) \subset G $
and{\rm ,} for any $ x_0 \in \mathrm{B}_r(0_n) ${\rm ,} there is a control strategy~$ u_{x_0}(\cdot) \in \mathcal{U} $
satisfying
\begin{equation}
\arraycolsep=1.5pt
\def\arraystretch{2}
\begin{array}{c}
\left\| u_{x_0}(\cdot) \right\|_{L^{\infty}([0, +\infty), \, U)} \:\, \leqslant \:\, \bar{u}, \\
\left\| x \left( t; \, x_0, u_{x_0}(\cdot) \right) \right\| \:\, \leqslant \:\,
\beta(\| x_0 \|, t) \quad \forall t \geqslant 0;
\end{array}  \label{Eq_69}
\end{equation}
\item  there exists a constant~$ r > 0 $ and a function~$ \, \beta(\cdot, \cdot) \in \mathcal{K} \mathcal{L} \, $
such that $ \bar{\mathrm{B}}_r(0_n) \subset G $ and{\rm ,} for any $ x_0 \in \mathrm{B}_r(0_n) ${\rm ,} there is
a control strategy~$ u_{x_0}(\cdot) \in \mathcal{U} $ satisfying
$$
\left\| x \left( t; \, x_0, u_{x_0}(\cdot) \right) \right\| \: + \:
\left\| u_{x_0}(t) \right\| \:\: \leqslant \:\: \beta(\| x_0 \|, t) \quad
\forall t \geqslant 0
$$
{\rm (}this implies {\rm (\ref{Eq_69})} with $ \bar{u} = \beta(r, 0) ${\rm )}.
\end{list}
\end{assumption}

\begin{remark}  \label{Rem_33}  \rm
If $ \, 0_m \in \mathrm{int} \, U $, $ \, f(0_n, 0_m) = 0_n, \, $ the function~$ f(\cdot, \cdot) $ is continuously
differentiable, and the linearization
\begin{equation}
\arraycolsep=1.5pt
\def\arraystretch{1.5}
\begin{array}{c}
\dot{x}(t) \: = \: A \, x(t) \, + \, B \, u(t), \quad t \geqslant 0, \quad u(\cdot) \, \in \, \mathcal{U}, \\
A \: \stackrel{\mathrm{def}}{=} \: \mathrm{D}_x f(0_n, 0_m) \: \in \: \mathbb{R}^{n \times n}, \quad
B \: \stackrel{\mathrm{def}}{=} \: \mathrm{D}_u f(0_n, 0_m) \: \in \: \mathbb{R}^{n \times m},
\end{array}  \label{Eq_60}
\end{equation}
of the system~(\ref{Eq_1}) is asymptotically null-controllable, then (\ref{Eq_1}) admits a locally stabilizing
linear feedback according to \cite[\S 5.8, Theorem~19]{SontagBook1998}, and Item~2 of Assumption~\ref{Ass_32} therefore
holds.  \qed
\end{remark}

\begin{remark}  \label{Rem_34}  \rm
Due to \cite[Proposition~7]{Sontag1998}, $ \, \beta(\cdot, \cdot) \in \mathcal{K} \mathcal{L} \, $ implies
the existence of two functions $ \: \alpha_1(\cdot), \alpha_2(\cdot) \, \in \, \mathcal{K}_{\infty} \: $ satisfying
\begin{equation}
\beta(\rho, t) \: \leqslant \: \alpha_2 \left( \alpha_1(\rho) \, e^{-t} \right) \quad \forall \rho \geqslant 0 \quad
\forall t \geqslant 0.  \label{Eq_73}
\end{equation}
For example, if $ C_9, C_{10} $ are positive constants, $ \nu(\cdot) \in \mathcal{K}_{\infty} $, and
$$
\beta(\rho, t) \: = \: C_9 \, \nu(\rho) \, e^{-C_{10} \, t} \quad \forall \rho \geqslant 0 \quad \forall t \geqslant 0,
$$
then one can choose
$$
\alpha_1(\rho) \: = \: (\nu(\rho))^{1 / C_{10}}, \quad \alpha_2(\rho) \: = \: C_9 \, \rho^{C_{10}} \quad
\forall \rho \geqslant 0 \quad \forall t \geqslant 0
$$
in order to fulfill the estimate~(\ref{Eq_73}) in the equality form. \qed
\end{remark}

\begin{proposition}{\rm \cite[Proposition~2.3]{CamilliGruneWirth2008}}  \label{Pro_35}
Let Assumptions~{\rm \ref{Ass_1}} and {\rm \ref{Ass_32}} hold. The region of asymptotic null-controllability~$ \mathcal{D}_0 $
is an open domain containing the closed ball~$ \bar{\mathrm{B}}_r(0_n) $. Furthermore{\rm ,} $ \mathcal{D}_0 $ is weakly
invariant in the sense that{\rm ,} for any $ x_0 \in \mathcal{D}_0 ${\rm ,} there exists
$ u_{x_0}(\cdot) \in \mathcal{U} $ satisfying $ \: x(t; \, x_0, u_{x_0}(\cdot)) \, \in \, \mathcal{D}_0 \: $ for all
$ t \geqslant 0 $.
\end{proposition}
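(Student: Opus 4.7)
The plan is to combine Assumption~\ref{Ass_32} (local asymptotic null-controllability near the origin) with the continuous dependence on initial data that follows from Items~4 and~5 of Assumption~\ref{Ass_1}. Containment of the ball and weak invariance are the easy parts. For the ball, given $x_0\in\mathrm{B}_r(0_n)$, Assumption~\ref{Ass_32} directly produces $u_{x_0}(\cdot)\in\mathcal{U}$ with $\|x(t;x_0,u_{x_0})\|\leq\beta(\|x_0\|,t)\to 0$, giving $\mathrm{B}_r(0_n)\subseteq\mathcal{D}_0$; the closed ball can be handled by applying the same reasoning after shrinking $r$ infinitesimally, or bootstrapped from the openness proven below. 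Weak invariance follows by the standard time-shift trick: given $x_0\in\mathcal{D}_0$ with stabilizing control $u_{x_0}$, for any $t_0\geq 0$ the shifted strategy $\tilde{u}(s)\stackrel{\mathrm{def}}{=}u_{x_0}(t_0+s)$ lies in $\mathcal{U}$ and satisfies $x(s;\,x(t_0;x_0,u_{x_0}),\tilde{u})=x(t_0+s;x_0,u_{x_0})\to 0_n$, so $x(t_0;x_0,u_{x_0})\in\mathcal{D}_0$.

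The main step is openness. Pick $x_0\in\mathcal{D}_0$ and a corresponding control $u_{x_0}(\cdot)\in\mathcal{U}$ driving the trajectory asymptotically to the origin. Choose a finite time $T>0$ such that $x(T;x_0,u_{x_0})\in\mathrm{B}_{r/2}(0_n)$. Using the local Lipschitz bound in Item~4 of Assumption~\ref{Ass_1} on a compact tube enclosing the nominal trajectory (whose boundedness on $[0,T]$ is ensured by Item~5, as noted in Remark~\ref{Rem_2}), Gronwall's inequality shows that the flow map $x_0'\mapsto x(T;x_0',u_{x_0})$ is continuous on a neighborhood $\mathcal{N}$ of $x_0$; hence $\mathcal{N}$ can be shrunk so that $x(T;x_0',u_{x_0})\in\mathrm{B}_r(0_n)$ for all $x_0'\in\mathcal{N}$. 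A local control supplied by Assumption~\ref{Ass_32} applied to this endpoint then drives it asymptotically to $0_n$; concatenating $u_{x_0}|_{[0,T)}$ with the appropriately time-shifted local control gives an admissible strategy in $\mathcal{U}$ realizing $x_0'\in\mathcal{D}_0$, so $\mathcal{N}\subseteq\mathcal{D}_0$.

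Connectedness of $\mathcal{D}_0$ completes the assertion that $\mathcal{D}_0$ is an open domain. For each $x_0\in\mathcal{D}_0$ the curve $[0,+\infty)\ni t\mapsto x(t;x_0,u_{x_0})$ is continuous, lies in $\mathcal{D}_0$ by weak invariance, starts at $x_0$, and converges to $0_n\in\mathrm{B}_r(0_n)\subseteq\mathcal{D}_0$; since $\mathrm{B}_r(0_n)$ is itself path-connected, concatenation yields a path in $\mathcal{D}_0$ joining any two of its points. The principal obstacle I anticipate is the openness step: it requires coupling pointwise asymptotic controllability with a gluing argument, and carrying out the Gronwall estimate on the possibly large compact set traced by the nominal trajectory over $[0,T]$ — which is exactly where Items~4 and~5 of Assumption~\ref{Ass_1} are needed in lieu of a global Lipschitz constant on $f(\cdot,u)$.
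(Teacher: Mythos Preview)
Your argument is correct and is the standard direct route; the paper itself does not prove this proposition but simply cites \cite[Proposition~2.3]{CamilliGruneWirth2008}, whose proof proceeds along the same lines (continuous dependence plus concatenation for openness, time-shift for weak invariance, and trajectories into the local ball for connectedness). The only point worth tightening is the inclusion of the \emph{closed} ball $\bar{\mathrm{B}}_r(0_n)$: your ``bootstrapped from openness'' remark works once you observe that the neighborhood radius in the openness step can be taken uniformly over $x_0'\in\mathrm{B}_r(0_n)$, since the hitting time $T$ and the compact tube containing the trajectory depend only on $\beta(r,\cdot)$ and not on the particular $x_0'$.
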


Next, let us formulate the conditions on the running cost~$ g(\cdot, \cdot) $ in an infinite-horizon optimal control
problem leading to a global CLF in line with the results of \cite[Sections~3 and 4]{CamilliGruneWirth2008}.

\begin{assumption}  \label{Ass_36}
Let $ \alpha_2^{-1}(\cdot) $ be the inverse of the function~$ \alpha_2(\cdot) $ introduced in Remark~{\rm \ref{Rem_34},}
and take the constants~$ r, \bar{u} $ from Assumption~{\rm \ref{Ass_32}}. The following properties hold{\rm :}
\begin{list}{\rm \arabic{count})}%
{\usecounter{count}}
\setlength\itemsep{0em}
\item  $ \bar{G} \times U \, \ni \, (x, u) \: \longmapsto \: g(x, u) \, \in \, [0, +\infty) \: $ is a nonnegative
continuous function{\rm ,} and{\rm ,} for any $ R > 0 ${\rm ,} there exists $ C_{5, R} > 0 $ satisfying
the condition~{\rm (\ref{Eq_68}) (}these are Items~{\rm 1,\,2} of Assumption~{\rm \ref{Ass_13});}
\item  for any $ R > 0 ${\rm ,} one has
$$
\inf \: \{ g(x, u) \: \colon \: (x, u) \, \in \, \bar{G} \times U, \:\: \| x \| \, \geqslant \, R \} \:\: > \:\: 0;
$$
\item  if Item~{\rm 2} of Assumption~{\rm \ref{Ass_32} (}the small control property{\rm )} is not asserted{\rm ,} then
there exist positive constants~$ C_{11}, C_{12} $ such that
\begin{equation}
g(x, u) \:\: \leqslant \:\: C_{11} \: \left( \alpha_2^{-1}(\| x \|) \right)^{C_{12}} \quad
\forall x \in \bar{\mathrm{B}}_r(0_n) \quad \forall \: u \, \in \, \bar{\mathrm{B}}_{\bar{u}}(0_m) \, \cap \, U;  \label{Eq_70}
\end{equation}
\item  if Item~{\rm 2} of Assumption~{\rm \ref{Ass_32}} holds{\rm ,} then the condition~{\rm (\ref{Eq_70})} is weakened to
$$
g(x, u) \:\: \leqslant \:\: C_{11} \: \left( \alpha_2^{-1}(\| x \| + \| u \|) \right)^{C_{12}} \quad
\forall x \in \bar{\mathrm{B}}_r(0_n) \quad \forall \: u \, \in \, \bar{\mathrm{B}}_{\bar{u}}(0_m) \, \cap \, U,
$$
where $ C_{11}, C_{12} $ are positive constants{\rm ;}
\item  there exists a constant~$ C_{13} > 0 $ satisfying
$$
g(x, u) \: \geqslant \: C_{13} \, \| f(x, u) \| \quad
\forall \: (x, u) \: \in \: \{ (x', u') \, \in \, \bar{G} \times U \: \colon \:
\| x' \| \, \geqslant \, 2 r \:\: \mathrm{or} \:\: \| u' \| \, \geqslant \, 2 \bar{u} \}
$$
{\rm (}we take $ \bar{u} = \beta(r, 0) $ if Item~{\rm 2} of Assumption~{\rm \ref{Ass_32}} holds{\rm )}.
\end{list}
\end{assumption}

Note the difference between Items~2--5 of Assumption~\ref{Ass_36} on one hand, and Items~3,\,4 of Assumption~\ref{Ass_13}
together with Assumption~\ref{Ass_16} on the other.

Introduce the infinite-horizon optimal control problem
\begin{equation}
V_0(x_0) \:\: \stackrel{\mathrm{def}}{=} \:\: \inf_{u(\cdot) \, \in \, \mathcal{U}} \:
\left\{ \int\limits_0^{+\infty} g(x(t; \, x_0, u(\cdot)), \: u(t)) \: \mathrm{d} t \right\} \:\: \in \:\:
[0, +\infty) \, \cup \, \{ +\infty \} \quad
\forall x_0 \in G.  \label{Eq_71}
\end{equation}
Similarly to (\ref{Eq_35}), consider the Kruzhkov transformed value function
\begin{equation}
v_0(x_0) \:\, \stackrel{\mathrm{def}}{=} \:\, 1 \, - \, e^{-V_0(x_0)} \:\, \in \:\, [0, 1] \quad \forall x_0 \in G
\label{Eq_72}
\end{equation}
with the convention $ \, e^{-(+\infty)} \stackrel{\mathrm{def}}{=} 0 $.

Similarly to \cite[Propositions~3.3,~3.5,~3.6 and Remark~4.2]{CamilliGruneWirth2008}, one can obtain the following
result that represents the domain of asymptotic null-controllability~$ \mathcal{D}_0 $ via the value
functions~$ V_0(\cdot), v_0(\cdot) $ and indicates the CLF property for $ V_0(\cdot) $ in $ \mathcal{D}_0 $.

\begin{theorem}  \label{Thm_37}
Let Assumptions~{\rm \ref{Ass_1}, \ref{Ass_32}} and Items~{\rm 1--4} of Assumption~{\rm \ref{Ass_36}} hold.
Then the domain of asymptotic null-controllability is represented as
\begin{equation}
\mathcal{D}_0 \:\: = \:\: \{ x_0 \in G \: \colon \: V_0(x_0) \, < \, +\infty \} \:\: = \:\:
\{ x_0 \in G \: \colon \: v_0(x_0) \, < \, 1 \}.  \label{Eq_79}
\end{equation}
If{\rm ,} moreover{\rm ,} Item~{\rm 5} of Assumption~{\rm \ref{Ass_36}} holds{\rm ,} then the restriction of
$ V_0(\cdot) $ to $ \mathcal{D}_0 $ is a CLF for the system~{\rm (\ref{Eq_1}),} and the following statements
in particular hold{\rm :}
\begin{itemize}
\setlength\itemsep{0em}
\item  $ V_0(\cdot) $ is continuous on $ \mathcal{D}_0 ${\rm ,} $ v_0(\cdot) $ is continuous on $ G ${\rm ;}
\item  $ \{ x_0 \in G \: \colon \: V_0(x_0) \, = \, 0 \} \:\, = \:\,
\{ x_0 \in G \: \colon \: v_0(x_0) \, = \, 0 \} \:\, = \:\, \{ 0_n \} ${\rm ;}
\item  for any sequence $ \, \left\{ x^{(k)} \right\}_{k = 1}^{\infty} \subset G \, $ satisfying either
$$
\lim_{k \, \to \, \infty} \, \mathrm{dist} \, \left( x^{(k)}, \partial \mathcal{D}_0 \right) \: = \: 0 \quad
\mathrm{or} \quad \lim_{k \, \to \, \infty} \, \left\| x^{(k)} \right\| \: = \: +\infty,
$$
one also has
$$
\lim_{k \, \to \, \infty} \, V_0 \left( x^{(k)} \right) \: = \: +\infty \quad \mathrm{and} \quad
\lim_{k \, \to \, \infty} \, v_0 \left( x^{(k)} \right) \: = \: 1.
$$
\end{itemize}
\end{theorem}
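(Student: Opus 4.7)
The plan is to adapt the arguments of \cite[Propositions~3.3,~3.5,~3.6 and Remark~4.2]{CamilliGruneWirth2008} to the present setup, replacing the global Lipschitz and boundedness hypotheses used there by the local ones of Assumption~\ref{Ass_1} and Assumption~\ref{Ass_36}, and accommodating the strongly invariant domain~$G$ via Remark~\ref{Rem_2}.

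For the representation~(\ref{Eq_79}), I would establish both inclusions separately. The direction ``$x_0 \in \mathcal{D}_0 \Rightarrow V_0(x_0) < +\infty$'' proceeds by concatenating controls: first, steer the trajectory from $x_0$ into $\mathrm{B}_r(0_n)$ in finite time using any control witnessing asymptotic null-controllability, then switch at the entry point~$x'$ to the locally stabilizing control $u_{x'}(\cdot)$ from Assumption~\ref{Ass_32}. The cost on the finite pre-switch interval is bounded by continuity of $g$ and boundedness of the reachable set (Remark~\ref{Rem_2}); the tail cost is bounded using $\|x(t)\| \leqslant \beta(\|x'\|,t) \leqslant \alpha_2(\alpha_1(\|x'\|)\, e^{-t})$ from Remark~\ref{Rem_34} substituted into the growth bound of Item~3 (or Item~4) of Assumption~\ref{Ass_36}, yielding an exponentially decaying integrand whose improper integral converges. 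The converse ``$V_0(x_0)<+\infty \Rightarrow x_0 \in \mathcal{D}_0$'' is obtained by contradiction: a control~$u(\cdot)$ with finite cost whose trajectory does not converge to $0_n$ would admit some $\varepsilon > 0$ and $t_k \to +\infty$ with $\|x(t_k)\| \geqslant \varepsilon$; Item~2 of Assumption~\ref{Ass_36} then bounds $g$ below by some $\delta > 0$ on $\{\|x\| \geqslant \varepsilon/2\} \times U$, and Item~4 of Assumption~\ref{Ass_1} (Gronwall) forces the trajectory to dwell in $\{\|x\| \geqslant \varepsilon/2\}$ over intervals of uniformly positive length around each~$t_k$, contradicting integrability of $g$ along the trajectory.

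For the CLF part under Item~5 of Assumption~\ref{Ass_36}, continuity of $V_0(\cdot)$ on $\mathcal{D}_0$ follows from the dynamic programming principle: for close $x_0, x_0' \in \mathcal{D}_0$, take an $\varepsilon$-optimal control for one of them, use Gronwall (Item~4 of Assumption~\ref{Ass_1}) to bound the discrepancy of the two trajectories, apply the Lipschitz bound~(\ref{Eq_68}) on $g$ (Item~1 of Assumption~\ref{Ass_36}) to bound the cost discrepancy, and restart the stabilizing controller once the perturbed trajectory is close enough to the origin. The identification $\{V_0 = 0\} = \{0_n\}$ is immediate: taking $u \equiv 0_m$ (legitimate by Assumption~\ref{Ass_32}) together with the growth bound of Items~3--4 of Assumption~\ref{Ass_36} evaluated at $x = 0_n$ gives $V_0(0_n) = 0$; for $x_0 \neq 0_n$, Item~2 of Assumption~\ref{Ass_36} and trajectory continuity force a strictly positive integral contribution. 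The CLF infinitesimal decrease is then obtained by differentiating the DPP along optimal trajectories, producing the candidate $W(x) = \min_{u \in U} g(x,u)$, which is continuous and positive definite by Items~1--3 of Assumption~\ref{Ass_36}.

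The main obstacle is the third bullet, which crucially uses Item~5 of Assumption~\ref{Ass_36}. For sequences with $\|x^{(k)}\| \to +\infty$, I would replicate the chain of estimates from the proof of Proposition~\ref{Pro_17} Item~3: any trajectory from $x^{(k)}$ must cover at least $\|x^{(k)}\| - 2r$ of Euclidean distance while $\|x\| \geqslant 2r$, where the bound $g \geqslant C_{13}\|f\|$ applies and converts the kinematic length into a cost lower bound $C_{13}(\|x^{(k)}\| - 2r) \to +\infty$. For $\mathrm{dist}(x^{(k)},\partial \mathcal{D}_0) \to 0$ along a bounded subsequence, extract a cluster point $x' \in \partial \mathcal{D}_0 \subset G \setminus \mathcal{D}_0$ (using openness of $\mathcal{D}_0$ from Proposition~\ref{Pro_35}), on which $V_0(x') = +\infty$ by the representation just proved; a Fatou/compactness argument applied to a hypothetical bounded minimizing sequence then rules out $\liminf V_0(x^{(k)}) < +\infty$, and this is the precise point where the reasoning of \cite[Proposition~3.6]{CamilliGruneWirth2008} must be imported with care. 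The statements on $v_0$ transfer at once through the strict monotonicity and continuity of $\xi \mapsto 1 - e^{-\xi}$ and the convention $e^{-(+\infty)} = 0$, which automatically makes $v_0$ continuous across $\partial \mathcal{D}_0$ in $G$ by collapsing the blow-up of $V_0$ to the value~$1$ already assigned on $G \setminus \mathcal{D}_0$.
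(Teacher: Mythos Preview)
Your proposal is correct and takes essentially the same approach as the paper: the paper does not give an independent proof but simply states that the result is obtained ``similarly to \cite[Propositions~3.3,~3.5,~3.6 and Remark~4.2]{CamilliGruneWirth2008}'', and your sketch is precisely an adaptation of those arguments to the present local hypotheses. One minor remark: in the converse direction of~(\ref{Eq_79}) you work harder than necessary by arguing that the finite-cost trajectory itself converges to~$0_n$; it suffices to observe that finite cost forces $\liminf_{t\to\infty}\|x(t)\| = 0$ (otherwise Item~2 of Assumption~\ref{Ass_36} gives a uniform positive lower bound on $g$ for all large~$t$), so the trajectory enters $\mathrm{B}_r(0_n)$ at some finite time and one then switches to the local stabilizer of Assumption~\ref{Ass_32}.
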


The dynamic programming principle for the transformed value function~$ v_0(\cdot) $ can be formulated as follows
(see, e.\,g., \cite[Section~3]{CamilliGruneWirth2008} and \cite[Sections~3,\,4]{Soravia1999}).

\begin{proposition}  \label{Pro_38}
Under Assumptions~{\rm \ref{Ass_1}, \ref{Ass_32}} and Items~{\rm 1--4} of Assumption~{\rm \ref{Ass_36},}
one has
\begin{equation}
\begin{aligned}
& v_0(x_0) \:\: = \:\: \inf\limits_{u(\cdot) \, \in \, \mathcal{U}} \: \inf\limits_{T \, \in \, [0, +\infty)} \:
\{ 1 \:\, - \:\, \mu(x_0, u(\cdot), T) \:\,
+ \:\, \mu(x_0, u(\cdot), T) \: v_0(x(T; \, x_0, u(\cdot))) \} \\
& \qquad\quad
= \:\: \inf\limits_{u(\cdot) \, \in \, \mathcal{U}} \: \sup\limits_{T \, \in \, [0, +\infty)} \:
\{ 1 \:\, - \:\, \mu(x_0, u(\cdot), T) \:\,
+ \:\, \mu(x_0, u(\cdot), T) \: v_0(x(T; \, x_0, u(\cdot))) \} \\
& \forall x_0 \in G,
\end{aligned}  \label{Eq_80}
\end{equation}
where
\begin{equation}
\mu(x_0, u(\cdot), T) \:\: \stackrel{\mathrm{def}}{=} \:\: \exp \left\{ -\int\limits_0^T
g(x(t; \, x_0, u(\cdot)), \: u(t)) \: \mathrm{d} t \right\} \quad
\forall x_0 \in G \quad \forall u(\cdot) \in \mathcal{U} \quad \forall T \in [0, +\infty).  \label{Eq_81}
\end{equation}
\end{proposition}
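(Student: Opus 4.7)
The plan is to reduce the statement to the classical Bellman dynamic programming principle for the untransformed infinite-horizon value function~$V_0(\cdot)$, exploiting the algebraic structure of the Kruzhkov transformation. The key observation is that, for any fixed $x_0 \in G$, $u(\cdot) \in \mathcal{U}$ and $T \geq 0$, the expression inside the infimum in~(\ref{Eq_80}) collapses to
\begin{equation*}
1 \, - \, \mu(x_0, u(\cdot), T) \, + \, \mu(x_0, u(\cdot), T) \, v_0(x(T; x_0, u(\cdot))) \,\, = \,\, 1 \, - \, \exp \bigl\{ -\bigl( \textstyle\int_0^T g(x(t; x_0, u(\cdot)), u(t)) \, \mathrm{d} t \, + \, V_0(x(T; x_0, u(\cdot))) \bigr) \bigr\},
\end{equation*}
where we invoke the definitions of $\mu$ and $v_0$ together with the convention $e^{-\infty} = 0$. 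Since $z \, \mapsto \, 1 - e^{-z}$ is a strictly increasing bijection from $[0, +\infty]$ onto $[0, 1]$, infima and suprema commute with this transformation, so it suffices to verify the untransformed dynamic programming principle
\begin{equation*}
V_0(x_0) \,\, = \,\, \inf_{u(\cdot) \, \in \, \mathcal{U}} \, \inf_{T \, \geq \, 0} \, Q(x_0, u(\cdot), T) \,\, = \,\, \inf_{u(\cdot) \, \in \, \mathcal{U}} \, \sup_{T \, \geq \, 0} \, Q(x_0, u(\cdot), T),
\end{equation*}
where $Q(x_0, u(\cdot), T) \, := \, \int_0^T g(x(t; x_0, u(\cdot)), u(t)) \, \mathrm{d} t \, + \, V_0(x(T; x_0, u(\cdot)))$, under the convention $Q = +\infty$ as soon as $V_0(x(T)) = +\infty$.

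Next I would establish two elementary inequalities for arbitrary $u(\cdot) \in \mathcal{U}$ and $T \geq 0$. First, the inequality $V_0(x_0) \leq Q(x_0, u(\cdot), T)$ (trivial when $V_0(x(T)) = +\infty$): for each $\varepsilon > 0$, pick an $\varepsilon$-suboptimal control $u^*(\cdot) \in \mathcal{U}$ for the problem starting from $x(T; x_0, u(\cdot))$; form the concatenation equal to $u$ on $[0, T]$ and to the time-shift $u^*(\cdot - T)$ on $[T, +\infty)$, which still belongs to $\mathcal{U}$ by local essential boundedness; by time-invariance of~(\ref{Eq_1}) and uniqueness of Carath\'eodory solutions, the corresponding state trajectory coincides with $x(\cdot; x_0, u(\cdot))$ on $[0, T]$ and prolongs it via the optimized tail, so testing $V_0(x_0)$ against this concatenated admissible pair and sending $\varepsilon \to 0$ yields the claim. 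Second, the inequality $Q(x_0, u(\cdot), T) \leq \int_0^{+\infty} g(x(t; x_0, u(\cdot)), u(t)) \, \mathrm{d} t$: once again by time-invariance, the shifted pair $(x(\cdot + T; x_0, u(\cdot)), u(\cdot + T))$ is an admissible trajectory–control pair for~(\ref{Eq_1}) from the initial state $x(T; x_0, u(\cdot))$, so its infinite-horizon cost $\int_T^{+\infty} g \, \mathrm{d} t$ upper-bounds $V_0(x(T; x_0, u(\cdot)))$ by the definition of the latter as an infimum.

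Combining these two inequalities with the trivial bound $\inf_u \inf_T Q \leq \inf_u \sup_T Q$ produces the sandwich
\begin{equation*}
V_0(x_0) \, \leq \, \inf_{u(\cdot) \, \in \, \mathcal{U}} \inf_{T \, \geq \, 0} Q(x_0, u(\cdot), T) \, \leq \, \inf_{u(\cdot) \, \in \, \mathcal{U}} \sup_{T \, \geq \, 0} Q(x_0, u(\cdot), T) \, \leq \, \inf_{u(\cdot) \, \in \, \mathcal{U}} \int\limits_0^{+\infty} g(x(t; x_0, u(\cdot)), u(t)) \, \mathrm{d} t \, = \, V_0(x_0),
\end{equation*}
forcing all middle terms to coincide with $V_0(x_0)$, and then reapplying the strictly increasing map $z \mapsto 1 - e^{-z}$ (with its natural extension at $+\infty$) recovers (\ref{Eq_80}). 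I do not expect a serious obstacle: the whole argument rests on the semigroup structure of admissible trajectories, which is ensured by Assumption~\ref{Ass_1}, and on the definition of $V_0$ as an infimum; Assumption~\ref{Ass_32} and Items~1--4 of Assumption~\ref{Ass_36} enter only to the extent that they are needed for Theorem~\ref{Thm_37} and the meaningful definition of the admissible controls. The only mildly delicate point is uniform bookkeeping of the value~$+\infty$ on $G \setminus \mathcal{D}_0$, handled transparently by the convention $e^{-\infty} = 0$ and by noting that both sides of the claimed equalities equal $1$ whenever $x_0 \in G \setminus \mathcal{D}_0$.
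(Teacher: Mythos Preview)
Your argument is correct; the paper itself does not give a proof of this proposition but simply refers the reader to \cite[Section~3]{CamilliGruneWirth2008} and \cite[Sections~3,\,4]{Soravia1999}. Your self-contained derivation---reducing to the untransformed dynamic programming principle via the monotone Kruzhkov map, then sandwiching $V_0(x_0)$ between $\inf_u\inf_T Q$ and $\inf_u\sup_T Q$ by means of the concatenation and restriction (tail) bounds---is exactly the standard route one finds in those references, so there is no substantive difference in approach.
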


The next theorem can be established similarly to \cite[Theorem~4.4]{CamilliGruneWirth2008} and in fact extends
the classical Zubov method for constructing Lyapunov functions~\cite{Zubov1964} to the problem of weak asymptotic
null-controllability. Due to the compactness of $ U $, there is no need to adopt \cite[Hypothesis~(H6)]{CamilliGruneWirth2008},
which states that, for any $ x \in G $ and $ \, \left\{ u^{(k)} \right\}_{k = 1}^{\infty} \subseteq U \, $ satisfying
$ \: \lim_{k \, \to \, \infty} \, \left\| u^{(k)} \right\| \: = \: +\infty, \: $ one also has
$$
\lim_{k \, \to \, \infty} \: \frac{\left| g \left( x, u^{(k)} \right) \right|}{1 \: + \:
\left\| f \left( x, u^{(k)} \right) \right\|} \:\, = \:\, +\infty.
$$

\begin{theorem}  \label{Thm_39}
Under Assumptions~{\rm \ref{Ass_1}, \ref{Ass_32}} and {\rm \ref{Ass_36},} the transformed value function~$ v_0(\cdot) $
is the unique bounded viscosity solution of the following boundary value problem for the HJB equation{\rm :}
\begin{equation}
\left\{ \begin{aligned}
& \max_{u \, \in \, U} \: \{ -\left< \mathrm{D} v_0(x), \, f(x, u) \right> \: - \:
(1 - v_0(x)) \, g(x, u) \} \:\: = \:\: 0, \quad x \in G, \\
& v_0(0_n) \, = \, 0.
\end{aligned} \right.  \label{Eq_74}
\end{equation}
\end{theorem}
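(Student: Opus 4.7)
The plan is to adapt the proof of \cite[Theorem~4.4]{CamilliGruneWirth2008} to the present setting. The argument is in fact somewhat simpler here because the compactness of $U$ imposed throughout Section~3 makes hypothesis~(H6) of \cite{CamilliGruneWirth2008} unnecessary and automatically yields uniform Lipschitz and growth bounds on $f(\cdot, u)$ and $g(\cdot, u)$. I would split the proof into three steps: verifying the boundary condition, establishing the viscosity sub- and supersolution properties of $v_0(\cdot)$, and proving uniqueness among bounded viscosity solutions.

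First, the boundary condition $v_0(0_n) = 0$ is immediate. By Assumption~\ref{Ass_32}, $0_m \in U$ and $f(0_n, 0_m) = 0_n$, so the constant control $u(\cdot) \equiv 0_m$ keeps any trajectory starting at $0_n$ pinned at the origin. Evaluating Item~3 or Item~4 of Assumption~\ref{Ass_36} at $x = 0_n$, $u = 0_m$ (noting $\alpha_2^{-1}(0) = 0$) gives $g(0_n, 0_m) = 0$, so $V_0(0_n) = 0$ and therefore $v_0(0_n) = 0$ by the definitions~(\ref{Eq_71}),~(\ref{Eq_72}).

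Second, for the subsolution property I would take $\varphi \in C^1(G)$ and a local maximum point $x_0$ of $v_0 - \varphi$, then use the first (inf--inf) form of the dynamic programming principle in Proposition~\ref{Pro_38} with an arbitrary constant control $u(\cdot) \equiv u_0 \in U$ on a small horizon $T > 0$, which gives $v_0(x_0) \leqslant 1 - \mu(x_0, u_0, T) + \mu(x_0, u_0, T)\, v_0(x(T; x_0, u_0))$. Combining the local maximum property $v_0(x(T; x_0, u_0)) - v_0(x_0) \leqslant \varphi(x(T; x_0, u_0)) - \varphi(x_0)$ with the first-order Taylor expansions $1 - \mu(x_0, u_0, T) = T\, g(x_0, u_0) + o(T)$ and $\varphi(x(T; x_0, u_0)) - \varphi(x_0) = T\, \langle \mathrm{D}\varphi(x_0),\, f(x_0, u_0)\rangle + o(T)$, then dividing by $T$ and letting $T \to 0^+$ should yield $-\langle \mathrm{D}\varphi(x_0),\, f(x_0, u_0)\rangle - (1 - v_0(x_0))\, g(x_0, u_0) \leqslant 0$; taking the maximum over $u_0 \in U$ gives the subsolution inequality. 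The supersolution property is obtained by a standard contradiction argument: assuming the Hamiltonian is strictly negative at a local minimum of $v_0 - \varphi$, the continuity of $f, g$ and compactness of $U$ propagate the strict inequality to all trajectories through $x_0$ on a short time interval, contradicting the inf form of the DPP applied to a minimizing sequence of controls.

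The main obstacle is the uniqueness among bounded viscosity solutions. I would invoke a comparison principle for bounded sub-/supersolutions via the classical doubling-of-variables technique with a quadratic penalization $\|x - y\|^2/\varepsilon$ together with Ishii's lemma. Two structural features of~(\ref{Eq_74}) drive the argument despite $G$ being possibly unbounded and the equation carrying only a pointwise boundary condition at the origin. First, the multiplicative zeroth-order term $-(1 - v)\, g(x, u)$ with $g \geqslant 0$ has exactly the sign/monotonicity in $v$ required by the standard comparison principle. Second, any bounded viscosity solution must share the far-field blow-up to $1$ at $\partial \mathcal{D}_0$ and at infinity established for $v_0$ in Theorem~\ref{Thm_37}, because such a solution admits (as in \cite{Soravia1999} and \cite[Section~4]{CamilliGruneWirth2008}) its own representation as a value function of an auxiliary optimal control problem and therefore inherits the same asymptotic behaviour. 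This converts the seemingly free problem on $G$ into one with usable effective boundary data and closes the comparison argument on every compact subset, forcing coincidence with $v_0$.
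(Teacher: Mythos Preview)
Your proposal is correct and follows exactly the route the paper indicates: the paper does not give a self-contained proof but simply states that the result ``can be established similarly to \cite[Theorem~4.4]{CamilliGruneWirth2008}'' and notes that the compactness of $U$ renders hypothesis~(H6) of \cite{CamilliGruneWirth2008} superfluous --- precisely the two observations you open with. Your three-step outline (boundary condition via $g(0_n,0_m)=0$, viscosity sub-/supersolution from the two forms of the DPP in Proposition~\ref{Pro_38}, and uniqueness via the comparison/representation machinery of \cite{Soravia1999,CamilliGruneWirth2008}) is the standard structure of that cited proof.
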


For numerical purposes, it is reasonable to approximate the infinite-horizon optimal control problem~(\ref{Eq_71})
by an exit-time problem. If a local CLF and its level sets are not practically obtained, the approach of Section~2
cannot be used. The exit-time problem is then stated with respect to the closed ball~$ \bar{\mathrm{B}}_{\delta}(0_n) $
with center~$ x = 0_n $ and sufficiently small radius~$ \delta \in (0, r] $ (see Fig.~\ref{Fig_2} and recall that
the constant~$ r $ was introduced in Assumption~\ref{Ass_32}):
\begin{equation}
\begin{aligned}
& V_{\delta}(x_0) \:\: \stackrel{\mathrm{def}}{=} \:\: \inf \: \left\{
\int\limits_0^T g(x(t; \, x_0, u(\cdot)), \: u(t)) \: \mathrm{d} t \:\, \colon \right. \\
& \qquad\qquad\qquad\quad
\left. {}^{{}^{{}^{{}^{{}^{{}^{{}^{{}^{{}^{}}}}}}}}}
u(\cdot) \in \mathcal{U}, \:\,\, x(T; \, x_0, u(\cdot)) \: \in \: \bar{\mathrm{B}}_{\delta}(0_n) \:\,\,
\mbox{at some $ \, T \in [0, +\infty) $} \right\} \\
& \forall x_0 \in G \quad \forall \delta \in (0, r].
\end{aligned}  \label{Eq_75}
\end{equation}
The convention $ \, \inf \, \emptyset = +\infty \, $ is adopted as before. With the help of the notation
\begin{equation}
T_{\delta}(x_0, u(\cdot)) \:\: \stackrel{\mathrm{def}}{=} \:\: \inf \, \left\{ T \in [0, +\infty) \: \colon \:
x(T; \, x_0, u(\cdot)) \: \in \: \bar{\mathrm{B}}_{\delta}(0_n) \right\} \quad
\forall x_0 \in G \quad \forall u(\cdot) \in \mathcal{U}  \label{Eq_76}
\end{equation}
for the exit times, the value function~(\ref{Eq_75}) can also be determined by
\begin{equation}
V_{\delta}(x_0) \:\: = \:\: \inf_{\substack{u(\cdot) \, \in \, \mathcal{U} \: \colon \\
T_{\delta}(x_0, \, u(\cdot)) \: < \: +\infty}} \: \left\{
\int\limits_0^{T_{\delta}(x_0, \, u(\cdot))} g(x(t; \, x_0, u(\cdot)), \: u(t)) \: \mathrm{d} t \right\} \quad
\forall x_0 \in G \quad \forall \delta \in (0, r]  \label{Eq_77}
\end{equation}
(note that the running cost is nonnegative according to Item~1 of Assumption~\ref{Ass_36}). Consider also
the Kruzhkov transformed function
\begin{equation}
v_{\delta}(x_0) \:\, \stackrel{\mathrm{def}}{=} \:\, 1 \, - \, e^{-V_{\delta}(x_0)} \:\, \in \:\, [0, 1] \quad
\forall x_0 \in G \quad \forall \delta \in (0, r].  \label{Eq_78}
\end{equation}

\begin{figure}
\begin{center}
\includegraphics[width=6.5cm,height=3.25cm]{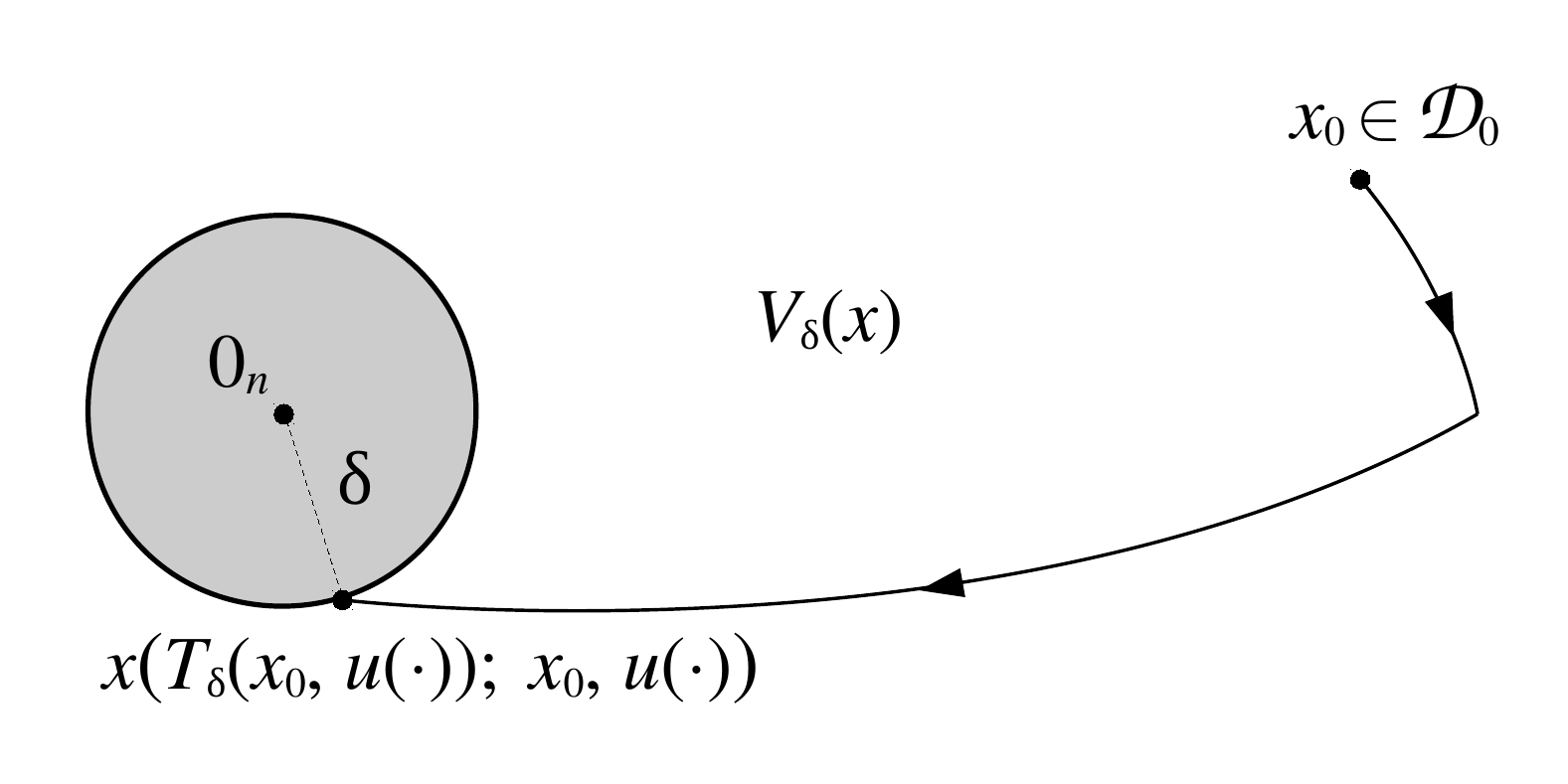}
\end{center}
\bf \caption{\rm The exit-time optimal control problem~{\rm (\ref{Eq_75}) (}or{\rm ,} equivalently{\rm , (\ref{Eq_77})),}
whose target set is the closed ball with center~$ x = 0_n $ and sufficiently small radius~$ \delta \in (0, r] $.}
\label{Fig_2}
\end{figure}

A key result on approximating the infinite-horizon problem~(\ref{Eq_71}) by the exit-time problem~(\ref{Eq_75})
(or, equivalently, by (\ref{Eq_77})) can now be derived.

\begin{theorem}  \label{Thm_40}
Under Assumptions~{\rm \ref{Ass_1}, \ref{Ass_32}} and {\rm \ref{Ass_36},} the following properties hold{\rm :}
\begin{list}{\rm \arabic{count})}%
{\usecounter{count}}
\setlength\itemsep{0em}
\item  the domain of asymptotic null-controllability can be represented as
$$
\mathcal{D}_0 \:\: = \:\: \{ x_0 \in G \: \colon \: V_{\delta}(x_0) \, < \, +\infty \} \:\: = \:\:
\{ x_0 \in G \: \colon \: v_{\delta}(x_0) \, < \, 1 \} \quad
\forall \delta \in [0, r]
$$
{\rm (}according to Definition~{\rm \ref{Def_4},} it is obvious that $ \mathcal{D}_0 $ does not depend on $ \delta ${\rm );}
\item  $ v_{\delta}(x_0) \to v_0(x_0) \, $ uniformly on $ G $ as $ \delta \to +0 ${\rm ;}
\item  $ V_{\delta}(x_0) \to V_0(x_0) \, $ uniformly on every compact subset of $ \mathcal{D}_0 $ as $ \delta \to +0 ${\rm ;}
\item  for every $ \delta \in (0, r] ${\rm ,} $ V_{\delta}(\cdot) $ is locally Lipschitz continuous in $ \mathcal{D}_0 $ {\rm (}and
therefore differentiable almost everywhere in $ \mathcal{D}_0 $ with respect to the Lebesgue measure in $ \mathbb{R}^n ${\rm )} if
the Petrov condition
$$
\min_{u \, \in \, U} \: \left< x, \, f(x, u) \right> \: < \: 0 \quad
\forall \: x \: \in \: \partial \mathrm{B}_{\delta}(0_n) \: = \: \{ x' \in \mathbb{R}^n \: \colon \: \| x' \| = \delta \}
$$
holds.
\end{list}
\end{theorem}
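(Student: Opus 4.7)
The plan is to handle the four items sequentially, with Item~2 as the key technical step from which Item~3 follows essentially formally, and to invoke the Cannarsa--Sinestrari framework for Item~4. For Item~1, the inclusion $\mathcal{D}_0\subseteq\{V_\delta<+\infty\}$ is immediate: any asymptotically stabilizing trajectory enters $\bar{\mathrm{B}}_\delta(0_n)$ in finite time, and the prefix integral of $g$ is finite by continuity of $g$ and compactness of $U$. The reverse inclusion I would obtain by concatenation: given a control steering $x_0$ into $\bar{\mathrm{B}}_\delta(0_n)\subseteq\bar{\mathrm{B}}_r(0_n)$, switch at the exit instant to the asymptotic stabilizer supplied by Assumption~\ref{Ass_32}, producing a control strategy that drives $x_0$ to $0_n$.

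For Item~2, observe first that $V_\delta\le V_0$ pointwise on $G$, since $g\ge 0$ makes the infinite-horizon integral dominate the exit-time integral; in particular $v_\delta\le v_0$. The main estimate is a uniform upper bound $V_0(x_0)-V_\delta(x_0)\le C(\delta)$ on $\mathcal{D}_0$ with $C(\delta)\to 0$ as $\delta\to +0$. To produce $C(\delta)$, I would take an $\varepsilon$-optimal control for $V_\delta(x_0)$, let $T_\delta$ be its exit time into $\bar{\mathrm{B}}_\delta(0_n)$, and switch to the local stabilizer $u_{x(T_\delta)}(\cdot)$ from Assumption~\ref{Ass_32}. The combined control is admissible for the infinite-horizon problem, so
\begin{equation*}
V_0(x_0)-V_\delta(x_0) \:\le\: \varepsilon \:+\: \int_0^{+\infty} g(x(s+T_\delta),\,u_{x(T_\delta)}(s))\,\mathrm{d}s.
\end{equation*}
For the tail, the $\mathcal{KL}$-estimate $\beta(\rho,t)\le\alpha_2(\alpha_1(\rho)e^{-t})$ from Remark~\ref{Rem_34}, combined with Item~3 or Item~4 of Assumption~\ref{Ass_36} (whichever applies), yields $g(x(s+T_\delta),u_{x(T_\delta)}(s))\le C_{11}(\alpha_1(\delta))^{C_{12}}e^{-C_{12}s}$, whose integral is $C_{11}(\alpha_1(\delta))^{C_{12}}/C_{12}$ and tends to $0$ with $\delta$; letting $\varepsilon\to 0$ gives the claimed $C(\delta)$. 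Then on $\mathcal{D}_0$,
\begin{equation*}
0 \:\le\: v_0(x_0)-v_\delta(x_0) \:=\: e^{-V_\delta(x_0)}\bigl(1-e^{-(V_0(x_0)-V_\delta(x_0))}\bigr) \:\le\: 1-e^{-C(\delta)},
\end{equation*}
while on $G\setminus\mathcal{D}_0$ both values are $1$ by Item~1; uniform convergence on all of $G$ follows.

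For Item~3, fix a compact set $K\subseteq\mathcal{D}_0$. Continuity of $v_0$ on $G$ (Theorem~\ref{Thm_37}) and $v_0<1$ on $\mathcal{D}_0$ give $v_0\le 1-\eta$ on $K$ for some $\eta>0$, and by Item~2, $v_\delta\le 1-\eta/2$ on $K$ for all sufficiently small $\delta$. The map $s\mapsto-\log(1-s)$ being Lipschitz on $[0,1-\eta/2]$ transfers uniform convergence from $v_\delta\to v_0$ to $V_\delta\to V_0$ on $K$. Item~4 is a direct application of the Cannarsa--Sinestrari theory: under the Petrov condition on $\partial\mathrm{B}_\delta(0_n)$, \cite[Remark~8.1.6 and the arguments in the proofs of Theorem~8.2.5 and Proposition~8.2.6]{Cannarsa2004} deliver local Lipschitz continuity of $V_\delta$ in the reachability set $\mathcal{D}_0$, exactly as in Proposition~\ref{Pro_17}, with Items~4--5 of Assumption~\ref{Ass_1} once again replacing the global Lipschitz hypothesis on $f(\cdot,u)$; Rademacher's theorem then yields a.e.\ differentiability.

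The principal obstacle is the tail estimate in Item~2: one must ensure that the extra cost incurred after the switch at the a priori unspecified point $x(T_\delta)\in\bar{\mathrm{B}}_\delta(0_n)$ is controlled \emph{purely} by $\delta$ and not by $x_0$. This is exactly where the growth compatibility between $g$ and $\alpha_2^{-1}$ required in Items~3--4 of Assumption~\ref{Ass_36} becomes essential, and the reason why the small-control alternative in Item~2 of Assumption~\ref{Ass_32} is paired with the stronger bound in Item~4 of Assumption~\ref{Ass_36}.
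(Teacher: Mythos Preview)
Your proof is correct, but your route through Item~2 differs from the paper's. The paper does not estimate the tail integral explicitly; instead it applies the dynamic programming principle for $v_0$ (Proposition~\ref{Pro_38}) at the stopping time $T_\delta(x_0,u(\cdot))$ to write
\[
v_0(x_0)=\inf_{u:\,T_\delta<+\infty}\bigl\{1-\mu(T_\delta)+\mu(T_\delta)\,v_0(x(T_\delta))\bigr\},\qquad
v_\delta(x_0)=\inf_{u:\,T_\delta<+\infty}\bigl\{1-\mu(T_\delta)\bigr\},
\]
and then bounds $0\le v_0(x_0)-v_\delta(x_0)\le\max_{y\in\bar{\mathrm B}_\delta(0_n)}v_0(y)\to 0$, invoking only the continuity of $v_0$ at the origin from Theorem~\ref{Thm_37}. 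Your concatenation-and-tail argument instead makes that continuity quantitative: you effectively re-derive the estimate behind $v_0(y)\to 0$ as $y\to 0_n$, obtaining an explicit rate $C(\delta)=C_{11}(\alpha_1(\delta))^{C_{12}}/C_{12}$. The paper's approach is cleaner and uses Theorem~\ref{Thm_37} as a black box; yours is more self-contained and yields a convergence rate. Two minor points worth tightening: your claim $V_\delta\le V_0$ on $G$ needs the observation that any control with finite infinite-horizon cost must reach $\bar{\mathrm B}_\delta(0_n)$ in finite time, which follows from Item~2 of Assumption~\ref{Ass_36}; and your tail bound requires the post-switch trajectory to stay in $\bar{\mathrm B}_r(0_n)$ so that Items~3--4 of Assumption~\ref{Ass_36} apply, which holds once $\beta(\delta,0)\le r$, i.e.\ for all sufficiently small $\delta$.
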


\begin{proof}
For any $ \delta \in (0, r] $, the condition~$ \, x_0 \, \in \, G \setminus \mathcal{D}_0 \, $ yields the absence of state
trajectories $ \: x(\cdot; \, x_0, u(\cdot)) \: $ corresponding to $ u(\cdot) \in \mathcal{U} $ and reaching the target
ball $ \, \bar{B}_{\delta}(0_n) \subseteq \bar{B}_r(0_n) \, $ in finite time, while such trajectories exist if
$ x_0 \in \mathcal{D}_0 $ (recall Definition~\ref{Def_4}, Assumption~\ref{Ass_32}, and the inclusion
$ \, \bar{\mathrm{B}}_r(0_n) \subset \mathcal{D}_0 \, $ from Proposition~\ref{Pro_35}). This and the property~(\ref{Eq_79})
lead to Item~1.

Note that Item~3 would follow from Item~2, because
$$
V_{\delta}(x_0) \: = \: - \ln \, (1 \, - \, v_{\delta}(x_0)) \quad
\forall x_0 \in \mathcal{D}_0 \quad \forall \delta \in [0, r]
$$
by virtue of the relations~(\ref{Eq_72}), (\ref{Eq_79}) and (\ref{Eq_78}). Besides, Item~4 can be established similarly to
Item~1 of Proposition~\ref{Pro_17} and the last sentence in Theorem~\ref{Thm_18}.

It hence remains to verify Item~2. If $ \, x_0 \, \in \, G \setminus \mathcal{D}_0, \, $ the statement follows directly from
Item~1 and the representation~(\ref{Eq_79}). Now consider arbitrary $ x_0 \in \mathcal{D}_0 $ and $ \delta \in (0, r] $.
Then the set of control strategies~$ u(\cdot) \in \mathcal{U} $ satisfying $ \, T_{\delta}(x_0, u(\cdot)) < +\infty \, $ is
nonempty. Due to Proposition~\ref{Pro_38}, one has
\begin{equation}
\begin{aligned}
& v_0(x_0) \:\: = \:\: \inf\limits_{\substack{u(\cdot) \, \in \, \mathcal{U} \: \colon \\
T_{\delta}(x_0, \, u(\cdot)) \: < \: +\infty}} \:
\left\{ 1 \:\, - \:\, \mu \left( x_0, \, u(\cdot), \, T_{\delta}(x_0, u(\cdot)) \right) \right. \\
& \qquad\qquad\quad
\left. + \:\, \mu \left( x_0, \, u(\cdot), \, T_{\delta}(x_0, u(\cdot)) \right) \: \cdot \:
v_0 \left(x \left( T_{\delta}(x_0, u(\cdot)); \: x_0, u(\cdot) \right) \right) \right\}.
\end{aligned}  \label{Eq_82}
\end{equation}
By using the nonnegativity of the running cost, as well as the formulas~(\ref{Eq_81}), (\ref{Eq_77}) and (\ref{Eq_78}), one
arrives at
\begin{equation}
\arraycolsep=1.5pt
\def\arraystretch{2}
\begin{array}{c}
0 \: < \: \mu(x_0, u(\cdot), T) \: \leqslant \: 1 \quad
\forall u(\cdot) \in \mathcal{U} \quad \forall T \geqslant 0, \\
v_{\delta}(x_0) \:\: = \:\: \inf\limits_{\substack{u(\cdot) \, \in \, \mathcal{U} \: \colon \\
T_{\delta}(x_0, \, u(\cdot)) \: < \: +\infty}} \:
\left\{ 1 \:\, - \:\, \mu \left( x_0, \, u(\cdot), \, T_{\delta}(x_0, u(\cdot)) \right) \right\}.
\end{array}  \label{Eq_83}
\end{equation}
Next, the obtained relations~(\ref{Eq_82}), (\ref{Eq_83}) lead to
$$
\begin{aligned}
0 \:\: & \leqslant \:\: v_0(x_0) \: - \: v_{\delta}(x_0) \\
& \leqslant \:\: \sup\limits_{\substack{u(\cdot) \, \in \, \mathcal{U} \: \colon \\
T_{\delta}(x_0, \, u(\cdot)) \: < \: +\infty}} \: \left\{
\mu \left( x_0, \, u(\cdot), \, T_{\delta}(x_0, u(\cdot)) \right) \: \cdot \:
v_0 \left(x \left( T_{\delta}(x_0, u(\cdot)); \: x_0, u(\cdot) \right) \right) \right\} \\
& \leqslant \:\: \sup\limits_{\substack{u(\cdot) \, \in \, \mathcal{U} \: \colon \\
T_{\delta}(x_0, \, u(\cdot)) \: < \: +\infty}} \:
v_0 \left(x \left( T_{\delta}(x_0, u(\cdot)); \: x_0, u(\cdot) \right) \right) \\
& \leqslant \:\: \max\limits_{y \, \in \, \bar{\mathrm{B}}_{\delta}(0_n)} \, v_0(y).
\end{aligned}
$$
In order to complete the proof, it now suffices to use the property
$$
\lim\limits_{\delta \, \to \, +0} \: \max\limits_{y \, \in \, \bar{\mathrm{B}}_{\delta}(0_n)} \, v_0(y) \:\, = \:\, 0,
$$
which follows from the equality~$ v_0(0_n) = 0 $ and the continuity of $ v_0(\cdot) $ on $ G $ mentioned in
Theorem~\ref{Thm_37}.
\end{proof}

\begin{remark}  \label{Rem_41}  \rm
In contrast with the global CLF characterization in Theorem~\ref{Thm_18} involving a local CLF, the approximating value
function~$ V_{\delta}(\cdot) $ for a fixed sufficiently small~$ \delta \in (0, r] $ leads to the so-called practical
stabilization in $ \mathcal{D}_0 $ (see, e.\,g., \cite[Subsection~2.11]{GieslHafstein2015}), but not to the asymptotic one.
Indeed, the exit-time optimal control problem~(\ref{Eq_75}) is stated without using a local CLF and therefore does not
allow to obtain stabilizing control actions in the target ball~$ \bar{\mathrm{B}}_{\delta}(0_n) $. \qed
\end{remark}

In order to ensure the existence of optimal control strategies and to use Pontryagin's principle for the exit-time
problem~(\ref{Eq_75}) with $ \, x_0 \, \in \, \mathcal{D}_0 \setminus \bar{\mathrm{B}}_{\delta}(0_n) \, $ and
$ \delta \in (0, r] $, we also need Assumptions~\ref{Ass_19} and \ref{Ass_21}. Note that the case when
$ \, x_0 \in \bar{\mathrm{B}}_{\delta}(0_n) \, $ with $ \delta \in (0, r] $ is trivial and yields
$ \: T_{\delta}(x_0, u(\cdot)) \, = \, 0 \: $ for all $ u(\cdot) \in \mathcal{U} $.

The existence result can be verified similarly to Theorem~\ref{Thm_22}.

\begin{theorem}  \label{Thm_42}
Let Assumptions~{\rm \ref{Ass_1}, \ref{Ass_32}, \ref{Ass_36}} and {\rm \ref{Ass_19}} hold. For any fixed initial
state~$ \, x_0 \, \in \, \mathcal{D}_0 \setminus \bar{\mathrm{B}}_{\delta}(0_n) \, $ and parameter~$ \delta \in (0, r] ${\rm ,}
there exists an optimal control strategy for the exit-time problem~{\rm (\ref{Eq_75})} or{\rm ,} equivalently{\rm ,} for
{\rm (\ref{Eq_77})}.
\end{theorem}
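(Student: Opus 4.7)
The plan is to mirror the proof of Theorem~\ref{Thm_22}, with the sublevel set $\Omega_c$ replaced by the target ball $\bar{\mathrm{B}}_{\delta}(0_n)$ and the role of Item~4 of Assumption~\ref{Ass_13} played by Item~2 of Assumption~\ref{Ass_36}. The three ingredients I will need are a uniform lower bound on the running cost $g$ outside $\bar{\mathrm{B}}_{\delta}(0_n)$, giving a uniform bound on the exit times of nearly optimal trajectories; forward completeness of the dynamics (Remark~\ref{Rem_2}), confining the associated integral funnel to a compact set; and the convexity supplied by Assumption~\ref{Ass_19}, permitting the use of the general existence theorem in \cite[\S\,9.3]{Cesari1983}.

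First, I would fix $x_0 \in \mathcal{D}_0 \setminus \bar{\mathrm{B}}_{\delta}(0_n)$ and $\varepsilon > 0$. Item~1 of Theorem~\ref{Thm_40} ensures $V_{\delta}(x_0) < +\infty$, so the class $\mathcal{U}_{\varepsilon}(x_0) \subseteq \mathcal{U}$ of admissible controls with cost at most $V_{\delta}(x_0) + \varepsilon$ is nonempty, contains a minimizing sequence, and consists of controls with $T_{\delta}(x_0, u(\cdot)) < +\infty$.

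Second, I would bound the exit times uniformly over $\mathcal{U}_{\varepsilon}(x_0)$. Item~2 of Assumption~\ref{Ass_36} applied with $R = \delta$ yields a constant $\tilde{C}_{\delta} > 0$ with $g(x, u) \geqslant \tilde{C}_{\delta}$ for every $x \in \bar{G}$ satisfying $\|x\| \geqslant \delta$ and every $u \in U$. By the definition of $T_{\delta}(x_0, u(\cdot))$ as an infimum in~(\ref{Eq_76}) and the continuity of the trajectory, one has $\|x(t; x_0, u(\cdot))\| \geqslant \delta$ throughout $[0, T_{\delta}(x_0, u(\cdot))]$, whence
\begin{equation*}
\tilde{C}_{\delta}\, T_{\delta}(x_0, u(\cdot)) \;\leqslant\; \int_0^{T_{\delta}(x_0, u(\cdot))} g(x(t; x_0, u(\cdot)), u(t))\, \mathrm{d} t \;\leqslant\; V_{\delta}(x_0) + \varepsilon
\end{equation*}
for every $u(\cdot) \in \mathcal{U}_{\varepsilon}(x_0)$, giving the uniform bound $T_{\delta}(x_0, u(\cdot)) \leqslant T_{\max} := (V_{\delta}(x_0) + \varepsilon)/\tilde{C}_{\delta}$. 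Remark~\ref{Rem_2} then bounds the reachable set of~(\ref{Eq_1}) from $x_0$ on $[0, T_{\max}]$, so the integral funnel of trajectories driven by controls in $\mathcal{U}_{\varepsilon}(x_0)$ on $[0, T_{\delta}(x_0, u(\cdot))]$ is contained in some compact set $K \subset [0, T_{\max}] \times \bar{G}$.

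Finally, I would append the redundant state constraint $(t, x(t)) \in K$, which leaves the infimum in~(\ref{Eq_77}) unchanged, and invoke \cite[\S\,9.3]{Cesari1983} on $[0, T_{\max}]$ with the compact terminal constraint $x(T) \in \bar{\mathrm{B}}_{\delta}(0_n)$. Continuity of $f$ and $g$ on $\bar{G} \times U$, compactness of $U$ and of the target, equiboundedness and equicontinuity of the admissible trajectories over $[0, T_{\max}]$, and the convexity of the extended orientor field $\{(f(x, u), y) \in \mathbb{R}^n \times \mathbb{R} : u \in U,\; y \geqslant g(x, u)\}$ from Assumption~\ref{Ass_19} supply the remaining hypotheses. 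The main obstacle I anticipate is the infimum-versus-minimum subtlety for the hitting-time functional: along a minimizing sequence I must verify that the trajectory produced by Cesari's theorem actually reaches $\bar{\mathrm{B}}_{\delta}(0_n)$ at a time that coincides with its first entry time in the sense of~(\ref{Eq_76}), which is where closedness of $\bar{\mathrm{B}}_{\delta}(0_n)$, uniform convergence of the state trajectories on $[0, T_{\max}]$, and the uniform positivity of $g$ away from the target combine, mirroring the corresponding step in the proof of Theorem~\ref{Thm_22}.
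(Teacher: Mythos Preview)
Your proposal is correct and follows essentially the same approach the paper intends: the paper simply states that the result ``can be verified similarly to Theorem~\ref{Thm_22}'', and you have carried out precisely that adaptation, replacing $\Omega_c$ by $\bar{\mathrm{B}}_{\delta}(0_n)$ and invoking Item~2 of Assumption~\ref{Ass_36} (with $R=\delta$) in place of Item~4 of Assumption~\ref{Ass_13} to obtain the uniform lower bound on $g$ that controls the exit times. The concern you raise in the last paragraph about the first-hitting-time subtlety is already absorbed by the formulation~(\ref{Eq_75}) with free terminal time $T$ and the nonnegativity of $g$, exactly as the equivalence of (\ref{Eq_21}) and (\ref{Eq_29}) handles it in the proof of Theorem~\ref{Thm_22}.
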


\begin{remark}  \label{Rem_43}  \rm
Under Assumptions~\ref{Ass_1}, \ref{Ass_32}, \ref{Ass_36}, \ref{Ass_19} and \ref{Ass_21}, Pontryagin's principle for
the exit-time problem~(\ref{Eq_75}) with a fixed initial
state~$ \, x_0 \, \in \, \mathcal{D}_0 \setminus \bar{\mathrm{B}}_{\delta}(0_n) \, $ and a fixed parameter~$ \delta \in (0, r] $
can be formulated similarly to Theorem~\ref{Thm_23}, but with the difference that now the terminal set appears as
the ball~$ \bar{\mathrm{B}}_{\delta}(0_n) $ and can be reduced to the sphere~$ \partial \mathrm{B}_{\delta}(0_n) $, while
the terminal cost vanishes. One should consequently have
$$
\arraycolsep=1.5pt
\def\arraystretch{1.5}
\begin{array}{c}
T^* \, = \, T_{\delta}(x_0, u^*(\cdot)), \quad \| x^*(T^*) \| \, = \, \delta, \\
p^*(T^*) \:\, \in \:\, \mathrm{N}(x^*(T^*); \, \bar{\mathrm{B}}_{\delta}(0_n)) \:\, = \:\,
\{ \varkappa \, x^*(T^*) \: \colon \: \varkappa \geqslant 0 \}
\end{array}
$$
in the modified characteristic boundary value problem. \qed
\end{remark}

The following characteristics based representation is established similarly to Theorem~\ref{Thm_26}. For the Hamiltonian and
set-valued extremal control map, the notations~(\ref{Eq_31}) and (\ref{Eq_36}) are still used.

\begin{theorem}  \label{Thm_44}
Let Assumptions~{\rm \ref{Ass_1}, \ref{Ass_32}, \ref{Ass_36}, \ref{Ass_19}} and {\rm \ref{Ass_21}} hold. For any initial
state~$ \, x_0 \, \in \, \mathcal{D}_0 \setminus \bar{\mathrm{B}}_{\delta}(0_n) \, $ and parameter~$ \delta \in (0, r] ${\rm ,}
the Kruzhkov transformed value~$ v_{\delta}(x_0) $ defined by {\rm (\ref{Eq_75})--(\ref{Eq_78})} is the minimum of
$$
1 \:\: - \:\: \exp \left\{ -\int\limits_0^{T_{\delta}(x_0, \, u^*(\cdot))} g(x^*(t), \, u^*(t)) \: \mathrm{d} t \right\}
$$
over the solutions of the characteristic Cauchy problems
$$
\left\{ \begin{aligned}
& \dot{x^*}(t) \:\: = \:\: \mathrm{D}_p H(x^*(t), \, u^*(t), \, p^*(t), \, \tilde{p}^*) \:\: = \:\:
f(x^*(t), \, u^*(t)), \\
& \dot{p^*}(t) \:\: = \:\: -\mathrm{D}_x H(x^*(t), \, u^*(t), \, p^*(t), \, \tilde{p}^*) \\
& \qquad \:\:
= \:\: -(\mathrm{D}_x f(x^*(t), \, u^*(t)))^{\top} \: p^*(t) \:\, - \:\,
\tilde{p}^* \: \mathrm{D}_x g(x^*(t), \, u^*(t)), \\
& u^*(t) \: \in \: U^*(x^*(t), \, p^*(t), \, \tilde{p}^*), \\
& t \:\, \in \:\, \begin{cases}
[0, \, T_{\delta}(x_0, \, u^*(\cdot))], & T_{\delta}(x_0, \, u^*(\cdot)) \: < \: +\infty, \\
[0, +\infty), & T_{\delta}(x_0, \, u^*(\cdot)) \: = \: +\infty,
\end{cases} \\
& x^*(0) \, = \, x_0, \quad p^*(0) \, = \, p_0,
\end{aligned} \right.
$$
for all extended initial adjoint vectors
$$
(p_0, \tilde{p}^*) \:\, \in \:\, \{ (p, \tilde{p}) \: \colon \: p \in \mathbb{R}^n, \:\: \tilde{p} \in \{ 0, 1 \} \}.
$$
Moreover{\rm ,} the same value is obtained when minimizing over the bounded set
$$
(p_0, \tilde{p}^*) \:\, \in \:\, \{ (p, \tilde{p}) \: \in \: \mathbb{R}^n \times \mathbb{R} \: \colon \:
\| (p, \tilde{p}) \| \: = \: 1, \:\:\, \tilde{p} \geqslant 0 \},
$$
or even over its subset
$$
(p_0, \tilde{p}^*) \:\, \in \:\, \{ (p, \tilde{p}) \: \in \: \mathbb{R}^n \times \mathbb{R} \: \colon \:
\| (p, \tilde{p}) \| \: = \: 1, \:\:\, \tilde{p} \geqslant 0, \:\:\,
\mathcal{H}(x_0, p, \tilde{p}) \: = \: 0 \}.
$$
\end{theorem}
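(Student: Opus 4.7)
The plan is to imitate the proof of Theorem~\ref{Thm_26}, adapting each step to the target ball $\bar{\mathrm{B}}_{\delta}(0_n)$ in place of the sublevel set $\Omega_c$, while using Theorem~\ref{Thm_42} in place of Theorem~\ref{Thm_22} and the version of Pontryagin's principle described in Remark~\ref{Rem_43} in place of Theorem~\ref{Thm_23}.

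First, fix $x_0 \in \mathcal{D}_0 \setminus \bar{\mathrm{B}}_{\delta}(0_n)$ and $\delta \in (0,r]$. By Theorem~\ref{Thm_42}, there exists an optimal control strategy $u^*(\cdot) \in \mathcal{U}$ for the exit-time problem~(\ref{Eq_75}), and by the definitions~(\ref{Eq_77}),~(\ref{Eq_78}) of $V_{\delta}(\cdot)$ and $v_{\delta}(\cdot)$, the value $v_{\delta}(x_0)$ equals
\[
1 \: - \: \exp\Bigl\{-\!\!\int_0^{T_{\delta}(x_0,u^*(\cdot))}\! g(x^*(t), u^*(t))\,\mathrm{d}t\Bigr\}
\]
along any such optimal pair $(x^*(\cdot), u^*(\cdot))$. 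By Pontryagin's principle (in the modified form of Remark~\ref{Rem_43}), there exist a function $p^*(\cdot)$ and a constant $\tilde{p}^* \geqslant 0$, not both zero, satisfying the characteristic differential equations, the terminal transversality $p^*(T^*) = \varkappa\, x^*(T^*)$ for some $\varkappa \geqslant 0$, and the Hamiltonian minimum condition. In particular, $u^*(t) \in U^*(x^*(t), p^*(t), \tilde{p}^*)$ almost everywhere. By Remark~\ref{Rem_24}, we may restrict to $\tilde{p}^* \in \{0, 1\}$. Setting $p_0 = p^*(0)$ then produces a solution of the Cauchy problem written in the theorem statement, so the infimum of the cost functional~(\ref{Eq_75}) dominates the infimum of the Pontryagin-characteristic expression.

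The reverse inequality is the easy direction: every admissible solution $(x^*(\cdot), p^*(\cdot), u^*(\cdot))$ of the Cauchy problem with finite $T_{\delta}(x_0, u^*(\cdot))$ provides a feasible control $u^*(\cdot)$ for~(\ref{Eq_75}) generating cost exactly equal to~$\int_0^{T_{\delta}(x_0,u^*(\cdot))} g(x^*(t), u^*(t))\,\mathrm{d}t$, hence $\geqslant V_{\delta}(x_0)$; and if $T_{\delta}(x_0, u^*(\cdot)) = +\infty$, the corresponding expression under the exponential is $+\infty$, giving value $1 \geqslant v_{\delta}(x_0)$. This yields the first statement.

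For the reduction to the unit sphere: observe that $H(x, u, \lambda p, \lambda \tilde{p}) = \lambda\, H(x, u, p, \tilde{p})$ for $\lambda > 0$, so $U^*(x, \lambda p, \lambda \tilde{p}) = U^*(x, p, \tilde{p})$, and the state and control components of any characteristic trajectory are invariant under positive rescaling of $(p_0, \tilde{p}^*)$. Since the Pontryagin covector multiplier $(p^*(\cdot), \tilde{p}^*)$ is nontrivial, we may normalize $\|(p_0, \tilde{p}^*)\| = 1$ without loss, and $\tilde{p}^* \geqslant 0$ is preserved. For the further restriction to the subset~(\ref{Eq_41}), apply the Hamiltonian vanishing property~(\ref{Eq_34}) from Theorem~\ref{Thm_23} (which holds equally in the modified exit-time setting of Remark~\ref{Rem_43}, since the exit time $T^*$ is free), combined with the conservation of the Hamiltonian along characteristics (the analogue of Proposition~\ref{Pro_27}), to conclude $\mathcal{H}(x_0, p_0, \tilde{p}^*) = \mathcal{H}(x^*(T^*), p^*(T^*), \tilde{p}^*) = 0$. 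The main technical point — and the only real departure from the proof of Theorem~\ref{Thm_26} — is verifying that Pontryagin's principle applies in the precise form stated in Remark~\ref{Rem_43}, in particular that the terminal normal cone to $\bar{\mathrm{B}}_{\delta}(0_n)$ at $x^*(T^*)$ is the half-line $\{\varkappa\, x^*(T^*) \colon \varkappa \geqslant 0\}$ and that $T^*$ being a free exit time yields the transversality relation implying Hamiltonian vanishing; this is standard but requires care with the boundary structure.
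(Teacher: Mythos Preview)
Your proposal is correct and follows exactly the approach the paper takes: the paper does not give a separate detailed proof of this theorem but simply states that it ``is established similarly to Theorem~\ref{Thm_26},'' invoking Theorem~\ref{Thm_42} and Remark~\ref{Rem_43} in place of Theorem~\ref{Thm_22} and Theorem~\ref{Thm_23}. Your write-up correctly unpacks that analogy --- existence of an optimal control, Pontryagin's principle with the ball as target, the $\tilde{p}^* \in \{0,1\}$ reduction from Remark~\ref{Rem_24}, the ``wider characteristic field'' argument for the first statement, and the homogeneity plus Hamiltonian-vanishing argument for the sphere and zero-level restrictions --- and in fact supplies more detail than the paper itself.
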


Theorems~\ref{Thm_37}, \ref{Thm_40}, \ref{Thm_42}, \ref{Thm_44} together with Remarks~\ref{Rem_41}, \ref{Rem_43}
constitute the theoretical foundation of a curse-of-dimensionality-free approach to approximating CLFs and feedback
stabilization in case when one does find an appropriate local CLF.

\section{A curse-of-dimensionality-free approach to CLF approximation and feedback stabilization}

In the introduction, several well-known grid based numerical methods for solving Hamilton--Jacobi equations and
constructing optimal feedback strategies were noted. They typically require dense state space discretizations and
may face the practical dilemma of selecting a suitable bounded region for computations (in order to reduce boundary
cutoff errors in a relevant subdomain).

Alternatively, one can use the results of Sections~2 and 3 in order to approximate CLFs and associated feedbacks
independently at different initial states. As was discussed in the introduction, this enables for attenuating
the curse of dimensionality and selecting arbitrary bounded regions and grids in the state space. Parallel
computations can also be arranged.

Furthermore, the stabilizing control action at any isolated state can be directly retrieved either as the initial value of
an approximate optimal open-loop control strategy computed via a direct method, or by the corresponding representation in
Pontryagin's principle (recall (\ref{Eq_36})) with the initial state and an approximate optimal initial costate. The latter can be
obtained via an indirect characteristics based method or as an appropriate costate estimate building on direct
collocation~\cite{Benson2006,Garg2011}. Possibly unstable approximations of the gradient of the CLF are therefore not needed.

As was also noted in the introduction, even if the curse of dimensionality is mitigated, the curse of complexity is still
a formidable issue when constructing global or semi-global solution approximations in high-dimensional regions. Sparse
grid frameworks (see, e.\,g., \cite{KangWilcox2017,Garcke2012} and \cite[\S 3.7]{PressNR2007}) may help to attenuate that
if the dimension is not too high (typically not greater than $ 6 $) and if the sought-after functions are smooth enough.
However, the range of applicability of sparse grids to solving feedback control problems has to be further investigated.

More details and recommendations on implementing the curse-of-dimensionality-free approach are given in Section~A.2 of
\hyperlink{Online_App}
{the appendix}. They focus on the setting of Section~2 with a local CLF involved. Similar
practical considerations excluding local CLF construction can be applied to the setting of Section~3. However, further
development of efficient numerical algorithms with software implementations is left for future research.

Subsection~A.2.1 of \hyperlink{Online_App}
{the appendix} describes a linearization based numerical technique for
building quadratic local CLFs under some additional conditions, with the considerations of \cite[Section~3]{ChenAllgower1998}
serving as an important motivation. Those considerations can also be employed for constructing quadratic local CLFs under
the same assumptions. Although the technique presented in \hyperlink{Online_App}
{the appendix} is less elegant and
may be more computationally expensive, it is more straightforward to use and does not restrict the right-hand sides in
the decrease conditions for the resulting local CLFs necessarily to quadratic functions (in contrast to the approach of
\cite[Section~3]{ChenAllgower1998}).

In \hyperlink{Online_App}
{the appendix}, Subsection~A.2.2 develops a numerical framework using the characteristics
based representation in Theorem~\ref{Thm_26}, Subsection~A.2.3 briefly discusses the use of direct approximation methods,
and, finally, Subsection~A.2.4 points out how our curse-of-dimensionality-free approach can be incorporated in model
predictive control schemes and how sparse grids may be involved.

\section{Numerical simulations}

In this section, we consider two examples for testing certain implementaions of the discussed curse-of-dimensionality-free
approach to CLF approximation and feedback stabilization. The first example involves a nonlinear control system with
two-dimensional state space and can be treated analytically to some extent, so that the exact and numerical solutions
can be compared with each other. In the second example, a model predictive control scheme (see Subsection~A.2.4 in
\hyperlink{Online_App}
{the appendix}) incorporating that approach is applied to an essentially more complicated nonlinear
control system with six-dimensional state space.

The numerical simulations were conducted on a relatively weak machine with 1.4~GHz Intel~2957U CPU, and no parallel programming
tools were used. The runtimes can be significantly shorter for more powerful machines, especially when parallelization is done.

\begin{example}  \label{Exa_45}  \rm
Consider the control system~(\ref{Eq_1}) with $ \: n = 2 $, $ \, m = 1 $, $ \, x  = (x_1, x_2)^{\top} \in \mathbb{R}^2 $,
$ \, x(0) = x^0 \in \mathbb{R}^2 $, $ \, G = \mathbb{R}^2 $, $ \, U \subseteq \mathbb{R}, \: $ and
\begin{equation}
f(x, u) \:\, = \:\, \begin{pmatrix}
x_1 \, + \, 2 x_2 \, + \, u \\
-x_2 \, - \, 2 x_1^3
\end{pmatrix} \quad
\forall x \in \mathbb{R}^2 \quad \forall u \in U  \label{Eq_84}
\end{equation}
(see \cite[Example~1.1]{Shahmansoorian2009}).

First, let $ U = \mathbb{R} $. The proper, positive definite, and infinitely differentiable function
\begin{equation}
\tilde{V}(x) \:\, \stackrel{\mathrm{def}}{=} \:\, \frac{1}{4} \, x_1^4 \: + \: \frac{1}{2} \, x_2^2 \quad
\forall x \in \mathbb{R}^2  \label{Eq_85}
\end{equation}
is a global CLF for this system in the whole state space. Indeed,
$$
\begin{aligned}
& \left< \mathrm{D} \tilde{V}(x), \, f(x, u) \right> \:\: = \:\: x_1^3 \, (x_1 \, + \, 2 x_2 \, + \, u) \: + \:
x_2 \, \left( -x_2 \, - \, 2 x_1^3 \right) \:\:
= \:\: x_1^3 \, (x_1 \, + \, u) \: - \: x_2^2 \\
& \forall x \in \mathbb{R}^2 \quad \forall u \in \mathbb{R},
\end{aligned}
$$
so that, for any constant~$ b > 0 $ and for any bounded continuous function~$ \, \chi \colon \mathbb{R} \to \mathbb{R} \, $
satisfying
\begin{equation}
\chi(x_1) \, \geqslant \, b \quad \forall x_1 \in \mathbb{R},  \label{Eq_86}
\end{equation}
the feedback control strategy
\begin{equation}
\tilde{u}(x) \:\, \stackrel{\mathrm{def}}{=} \:\, -(1 \, + \,  \chi(x_1)) \: x_1 \quad \forall x \in \mathbb{R}^2  \label{Eq_87}
\end{equation}
is globally stabilizing due to
$$
\left< \mathrm{D} \tilde{V}(x), \, f(x, \tilde{u}(x)) \right> \:\: = \:\: -\chi(x_1) \, x_1^4 \: - \: x_2^2 \:\: \leqslant \:\:
-b x_1^4 \: - \: x_2^2 \quad \forall x \in \mathbb{R}^2.
$$
Introduce also the running cost
\begin{equation}
g(x, u) \:\, = \:\, 2 x_1^4 \: + \: x_2^2 \: + \: \frac{1}{256} \, u^4 \quad
\forall x \in \mathbb{R}^2 \quad \forall u \in \mathbb{R}.  \label{Eq_88}
\end{equation}
With the help of the classical verification result \cite[Chapter~VII, Theorem~2.2]{Afanasiev1996}, one can show that (\ref{Eq_85}) is
the value function in the infinite-horizon optimal control problem
\begin{equation}
\tilde{V} \left( x^0 \right) \:\: = \:\: \min_{u(\cdot) \:\, \in \:\, L_{\mathrm{loc}}^{\infty}([0, +\infty), \: \mathbb{R})} \:
\left\{ \int\limits_0^{+\infty} g \left( x \left( t; \, x^0, u(\cdot) \right), \: u(t) \right) \: \mathrm{d} t \right\} \quad
\forall x^0 \in \mathbb{R}^2  \label{Eq_89}
\end{equation}
for the considered system and running cost, and that
\begin{equation}
\tilde{u}^*(x) \, = \, -4 x_1  \label{Eq_90}
\end{equation}
is the corresponding optimal feedback control strategy.

Next, take the set of pointwise control constraints as the bounded line segment
\begin{equation}
U \, = \, [-a, a]  \label{Eq_91}
\end{equation}
with a constant~$ a > 0 $. Furthermore, fix a constant~$ b > 0 $ and a bounded continuous
function~$ \, \chi \colon \mathbb{R} \to \mathbb{R} \, $ satisfying (\ref{Eq_86}), and choose a constant~$ c > 0 $ such that
the feedback strategy~(\ref{Eq_87}) fulfills
\begin{equation}
\tilde{u}(x) \, \in \, U \, = \, [-a, a] \quad \forall x \in \Omega_c  \label{Eq_92}
\end{equation}
in the sublevel set
\begin{equation}
\Omega_c \:\, \stackrel{\mathrm{def}}{=} \:\, \{ x \in \Omega_c \: \colon \: \tilde{V}(x) \, \leqslant \, c \}  \label{Eq_93}
\end{equation}
of $ \tilde{V}(\cdot) $. Hence, the restrictions of $ \tilde{V}(\cdot) $ and $ \tilde{u}(\cdot) $ to $ \Omega_c $ are a local CLF and
a locally stabilizing feedback, respectively. Besides, there exists a constant~$ c^* > 0 $ such that the restriction of
$ \tilde{u}^*(\cdot) $ to $ \Omega_{c^*} $ is also a locally stabilizing feedback. For convenience, denote
\begin{equation}
V_{\mathrm{loc}}(x) \, \stackrel{\mathrm{def}}{=} \, \tilde{V}(x), \quad
u_{\mathrm{loc}}(x) \, \stackrel{\mathrm{def}}{=} \, \tilde{u}(x), \quad
u^*_{\mathrm{loc}}(x) \, \stackrel{\mathrm{def}}{=} \, \tilde{u}^*(x) \quad
\forall x \in \mathbb{R}^2.  \label{Eq_94}
\end{equation}
Now it is not difficult to check that Theorems~\ref{Thm_18}, \ref{Thm_22}, \ref{Thm_23}, \ref{Thm_25} and \ref{Thm_26}
can be used with the specified local CLF~$ V_{\mathrm{loc}}(\cdot) $, sublevel set~$ \Omega_c $, and running cost~(\ref{Eq_88}).
In particular, a global CLF~$ V(\cdot) $ in $ \mathcal{D}_0 $ is determined by (\ref{Eq_21}),~(\ref{Eq_25}), and its Kruzhkov
transform is given by (\ref{Eq_35}). Introduce also the Kruzhkov transform of $ V_{\mathrm{loc}}(\cdot) $:
\begin{equation}
v_{\mathrm{loc}} \left( x^0 \right) \:\, \stackrel{\mathrm{def}}{=} \:\, 1 \, - \, e^{-V_{\mathrm{loc}} \left( x^0 \right)} \quad
\forall x^0 \in \mathbb{R}^2.  \label{Eq_95}
\end{equation}

In order to approximate the global CLF and stabilizing feedback, as well as to construct a reasonable inner estimate for
the domain of asymptotic null-controllability, the characteristics based framework of Subsection~A.2.2 in
\hyperlink{Online_App}
{the appendix} was used. It is not difficult to verify the fulfillment of
Assumptions~A.2.3--A.2.5 from that subsection.

We take
\begin{equation}
\arraycolsep=1.5pt
\def\arraystretch{1.5}
\begin{array}{c}
a \, = \, 1.2, \quad b \, = \, 1.4, \quad c \, = \, 0.015, \quad c_1 \, = \, 0.01, \quad T_{\max} \, = \, 10, \\
\varepsilon \, = \, 10^{-15}, \quad \varepsilon_1 \, = \, 0.005, \quad \delta_1 \, = \, 0.005, \quad \delta_2 \, = \, 0.005
\end{array}  \label{Eq_96}
\end{equation}
(the notations~$ T_{\max}, \varepsilon, \varepsilon_1 $ and $ c_1, \delta_1, \delta_2 $ were introduced respectively in
Subsections~A.2.2.1 and A.2.2.2 of \hyperlink{Online_App}
{the appendix}),
\begin{equation}
\chi(x_1) \:\, = \:\, \begin{cases}
3, & -4 x_1 \, \in \, [-a, a], \\
\max \left\{ \dfrac{a}{|x_1|} \, - \, 1, \:\, b \right\}, & -4 x_1 \, \notin \, [-a, a],
\end{cases}  \label{Eq_97}
\end{equation}
so that $ \chi(\cdot) $ is bounded and continuous, (\ref{Eq_86}) holds,
\begin{equation}
\frac{a}{|x_1|} \: > \: 1 \, + \, b \quad \mbox{at all points~$ \, x \in \Omega_c \, $ for which $ x_1 \neq 0 $,}  \label{Eq_98}
\end{equation}
and (\ref{Eq_87}), (\ref{Eq_94}), (\ref{Eq_97}), (\ref{Eq_98}) imply the following relations:
\begin{equation}
\begin{aligned}
& u_{\mathrm{loc}}(x) \:\, = \:\, \tilde{u}(x) \:\, = \:\, \begin{cases}
u^*_{\mathrm{loc}}(x) \, = \, \tilde{u}^*(x) \, = \, -4 x_1, & -4 x_1 \, \in \, [-a, a], \\
-x_1 \: \max \left\{ \dfrac{a}{|x_1|} \, , \:\, 1 + b \right\}, & -4 x_1 \, \notin \, [-a, a],
\end{cases} \\
& u_{\mathrm{loc}}(x) \: = \: -a \,\, \mathrm{sign} \: x_1 \quad \mbox{at all points~$ \, x \in \Omega_c \, $ for which
$ \, -4 x_1 \, \notin \, [-a, a] $.}
\end{aligned}  \label{Eq_99}
\end{equation}
For a stabilizing feedback $ \, u^* \colon \mathcal{D}_0 \to U \, $ related to the CLF~$ V(\cdot) $, we put
\begin{equation}
u^*(x) \, = \, u_{\mathrm{loc}}(x) \quad \forall x \in \Omega_c.  \label{Eq_100}
\end{equation}

The characteristic Cauchy problems~(\ref{Eq_38}) and (A.22) (see Theorem~\ref{Thm_26} as well as Subsection~A.2.2.2 in
\hyperlink{Online_App}
{the appendix}) were numerically solved via the Dor\-mand--Prince fifth-order
Runge--Kutta algorithm from \cite[Chapter~17]{PressNR2007}. When launching the related routine, the initial guess for
the stepsize was specified as $ 2 \cdot 10^{-4} $, and the absolute and relative tolerances were selected as $ 10^{-6} $.
The output data was obtained for the uniform time grid on $ [0, T_{\max}] $ with the stepsize~$ 2 \cdot 10^{-4} $. In particular,
the shooting costs were approximated from the state trajectory discretizations on this time grid.

The initial states were chosen from the grid on the rectangle~$ \, [-2, 2] \times [-2.5, 2.5] \, $ with the spatial steps
$ 0.0625 $ and $ 0.1 $ along $ x_1 $-axis and $ x_2 $-axis, respectively. For solving the main and auxiliary finite-dimensional
optimization problems formulated in Theorem~\ref{Thm_26} and Subsection~A.2.2.2 of \hyperlink{Online_App}
{the appendix}, we used
the Powell algorithm from \cite[\S 10.7]{PressNR2007} (which does not require evaluation of derivatives), and the corresponding
tolerances were set as $ 10^{-6} $ and $ 10^{-8} $, respectively. For each state~$ x^0 $ on the selected rectangular grid,
the Powell iterative process for the auxiliary shooting problem was run from $ \: N_{\mbox{\scriptsize opt. init. guess}} \, = \, 4 \: $
initial guesses that were randomly generated according to the uniform distribution with respect to the angles in the unit sphere
parametrization (see (A.32) in \hyperlink{Online_App}
{the appendix}). The unique point~(A.26) and the roots of
the function~(A.29) (see these formulas in \hyperlink{Online_App}
{the appendix}) were computed via the bisection algorithm from
\cite[\S 9.1]{PressNR2007}, and the tolerance was taken as $ 10^{-13} $. An upper bound for the sought-after root~$ \lambda $ in
(A.26) was chosen as $ 0.5 $ (since $ \, \Omega_{c_1} \subset \mathrm{B}_{0.5}(0_2) $). Possible multiple roots of
the function~(A.29) on $ (0, 1) $ were bracketed by using the \texttt{zbrak} routine from \cite[\S 9.1]{PressNR2007}.
The bracketing pairs were searched for after dividing the interval~$ [0, 1] $ into $ 100 $ equally spaced segments.

The tolerance for the practical verification of equalities and non-strict inequalities via strict inequalities was set as
$ 10^{-15} $.

In the case when we could not find a characteristic reaching the target set~$ \Omega_c $ and generating a cost less than
$ 1 - \varepsilon $ for some initial state~$ x^0 $ (this might be not only a node on the specified rectangular grid, but also
a shooting estimate~$ \hat{x}^0 $ as described in the end of Subsection~A.2.2.2 in \hyperlink{Online_App}
{the appendix}),
the computation was rerun with the increased parameter values $ \: T_{\max} \, = \, T_{\max, \: \mathrm{recomp.}} \, = \, 20 \: $ and
$ \: N_{\mbox{\scriptsize opt. init. guess}} \, = \, N_{\mbox{\scriptsize opt. init. guess, recomp.}} \, = \, 5 $. In such
situations, the resulting data was taken from the second attempt ($ v \left( x^0 \right) $ might again be estimated as
$ 1 - \varepsilon $, which would indeed be reasonable if $ x^0 \notin \mathcal{D}_0 $).

The related numerical simulation results are illustrated in Figs.~\ref{Fig_3}--\ref{Fig_5}.

Fig.~\ref{Fig_3} indicates the Kruzhkov transformed functions~$ v(\cdot), v_{\mathrm{loc}}(\cdot) $ and their difference.
Fig.~\ref{Fig_4} shows the corresponding feedback strategies~$ u^*(\cdot), u^*_{\mathrm{loc}}(\cdot) $ and their difference.
In Fig.~\ref{Fig_3}, some approximated level sets of $ v(\cdot) $ are depicted as well. In particular, the level
$ \, v(x) = 1 - e^{-c} \, $ (or, equivalently, $ V(x) = c $) describes the boundary $ l_c = \partial \Omega_c $, while the level
$ \, v(x) = 1 - \varepsilon_1 = 0.995 \, $ is selected to represent an inner estimate of the domain of asymptotic
null-controllability~$ \mathcal{D}_0 $. The illustrations agree with the reasonable expectation that $ v(\cdot) $ and
$ v_{\mathrm{loc}}(\cdot) $ should coincide in some region strictly containing $ \Omega_c $, and that
$ \, u^*(x) = u^*_{\mathrm{loc}}(x) \, $ for all $ x $ lying in this region and satisfying
$ \, u^*_{\mathrm{loc}}(x) \in U = [-a, a] \, $ (due to (\ref{Eq_99}) and (\ref{Eq_100}), one also has
$ \, u^*(x) = u_{\mathrm{loc}}(x) \, $ for all such $ x $ and everywhere in $ \Omega_c $).

\begin{figure}
\begin{center}
\includegraphics[width=0.32\linewidth]{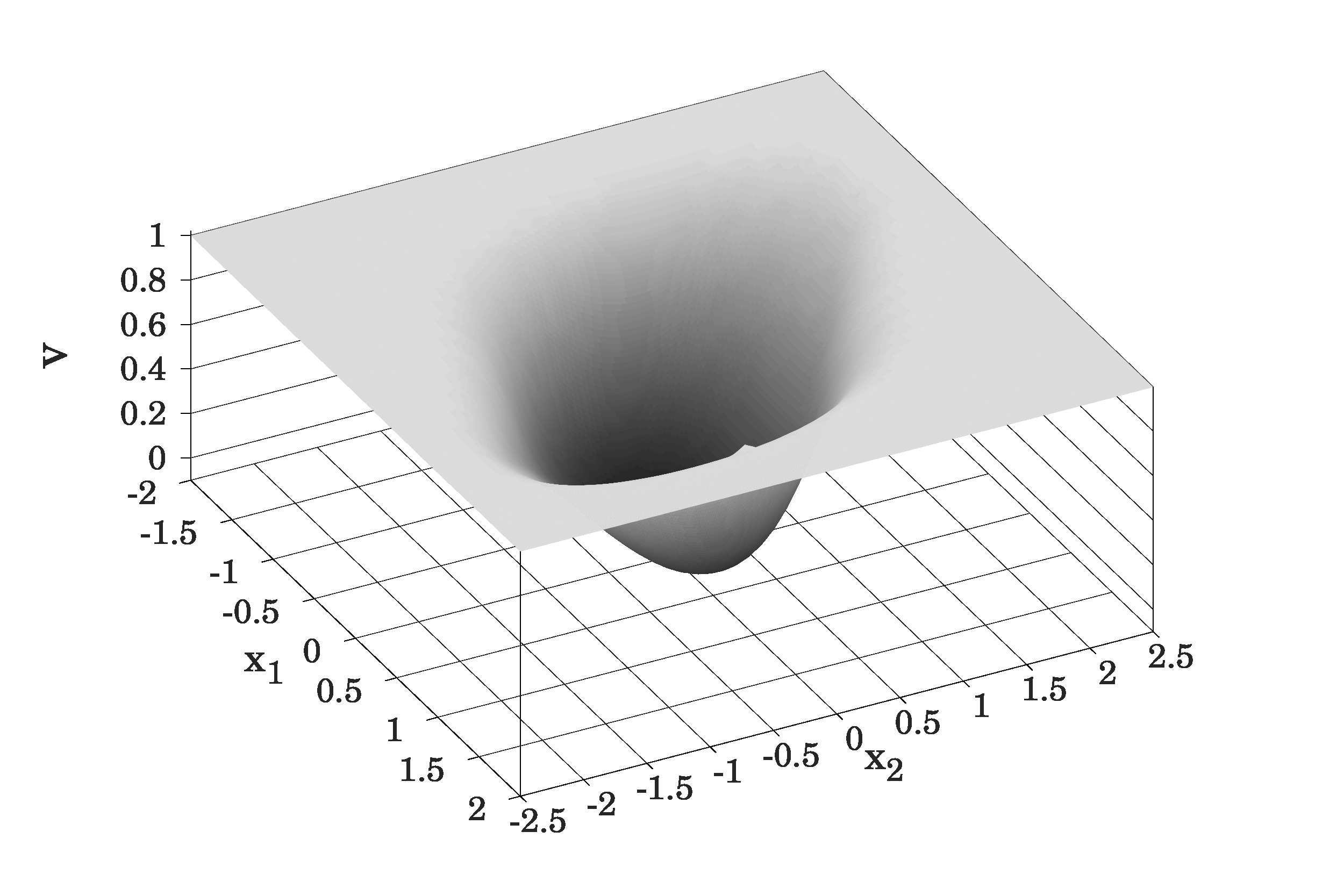}
\includegraphics[width=0.32\linewidth]{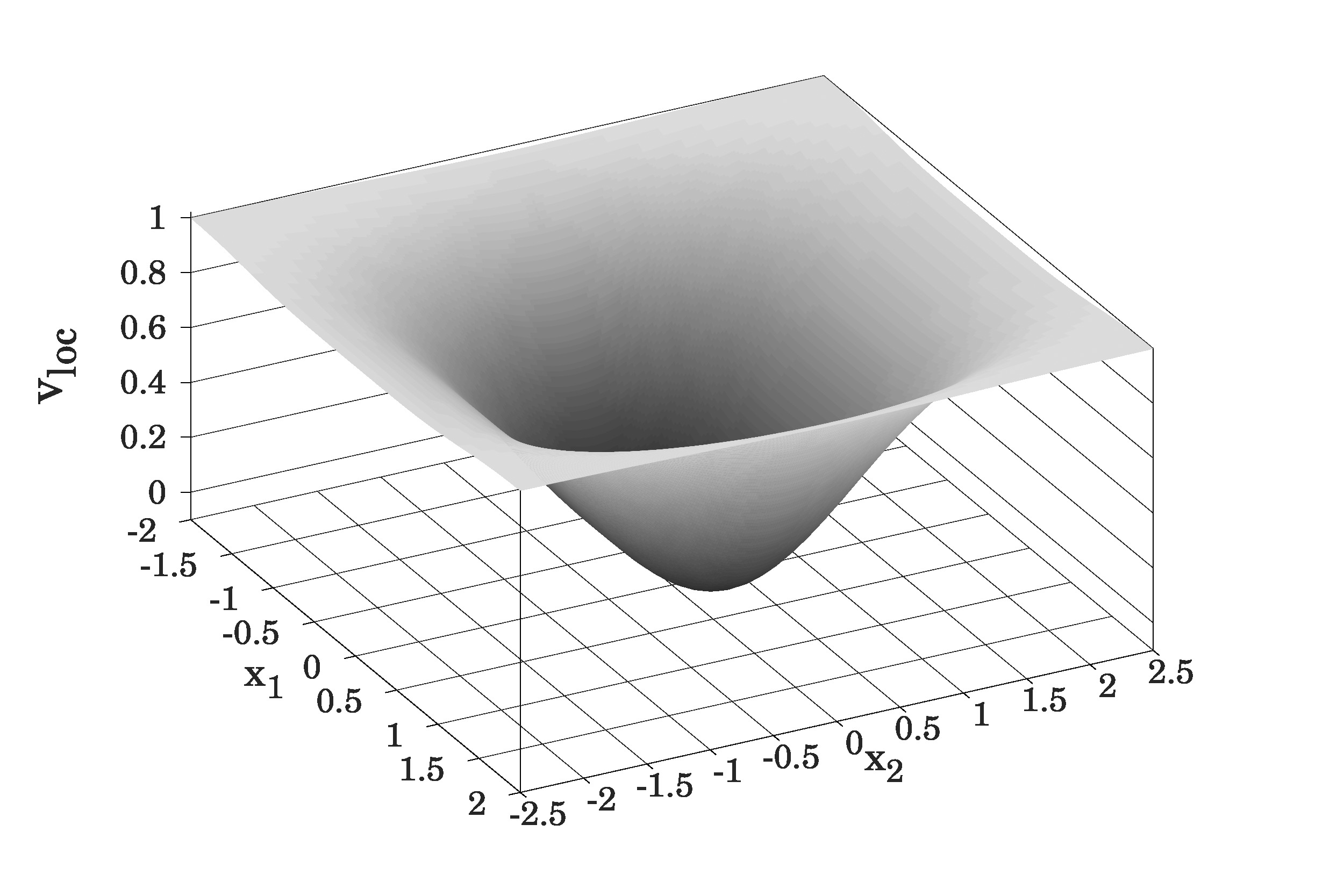}
\includegraphics[width=0.32\linewidth]{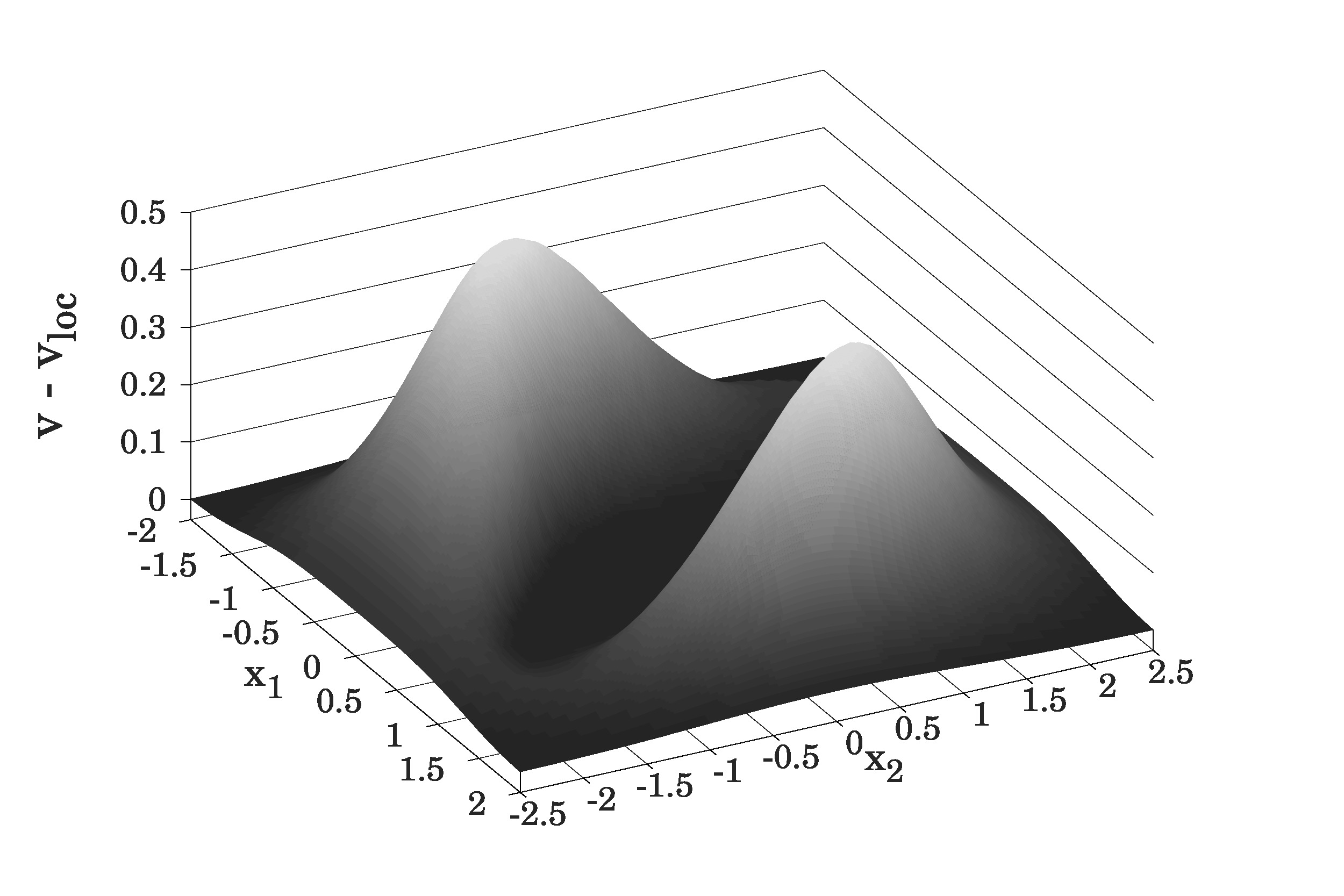} \\
\includegraphics[width=0.5\linewidth]{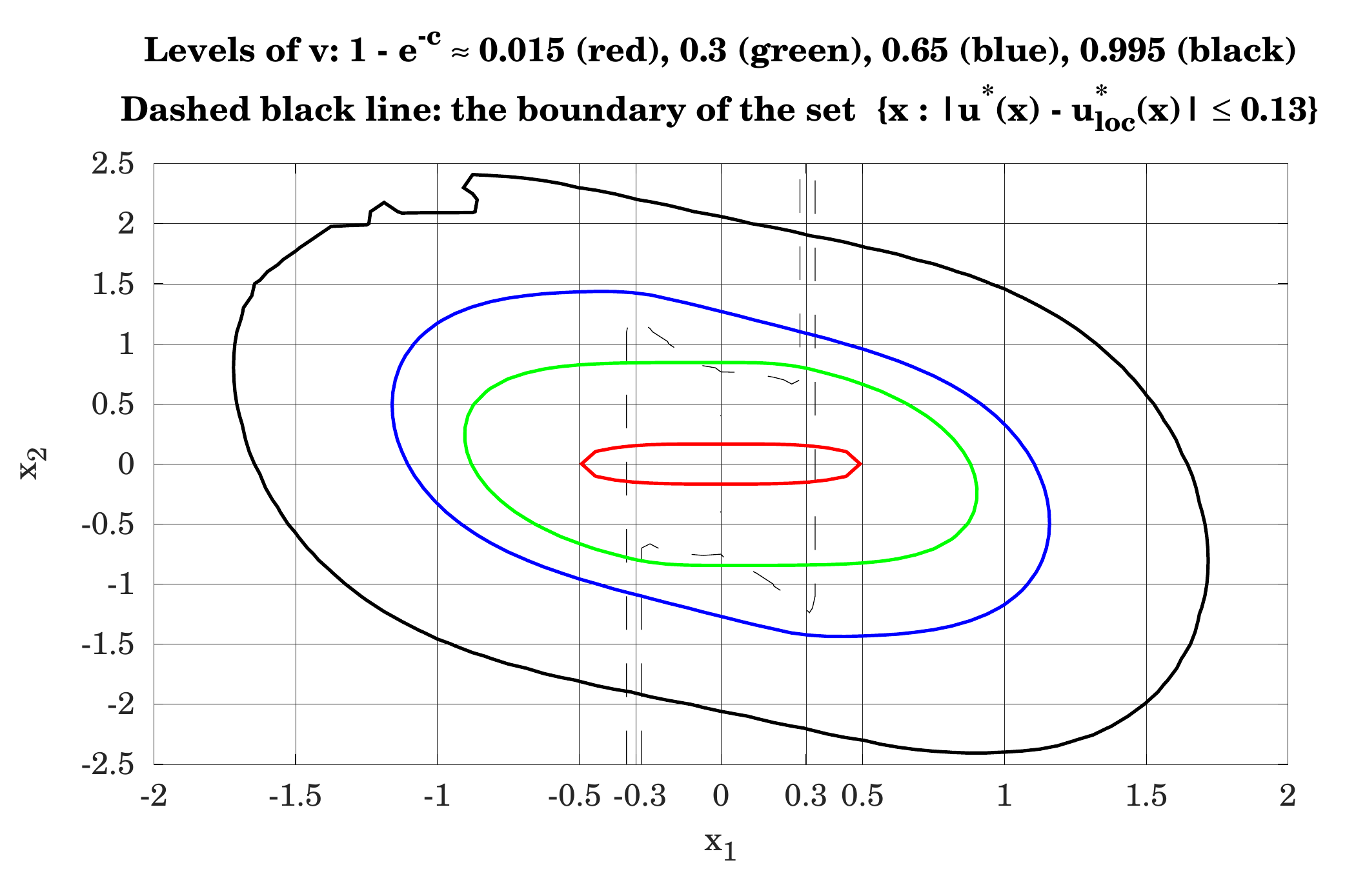}
\end{center}
\bf \caption{\rm The Kruzhkov transformed functions~$ v(\cdot), v_{\mathrm{loc}}(\cdot) $ and their difference for
$ U = [-1.2, 1.2] $ in Example~\ref{Exa_45}. Some approximated level sets of $ v(\cdot) $ are shown as well. In order to see
the graph of the difference between $ v(\cdot) $ and $ v_{\mathrm{loc}}(\cdot) $ clearer, the scale of the vertical axis
in the third subfigure is modified as compared to that in the first two subfigures.}
\label{Fig_3}
\end{figure}

\begin{figure}
\begin{center}
\includegraphics[width=0.32\linewidth]{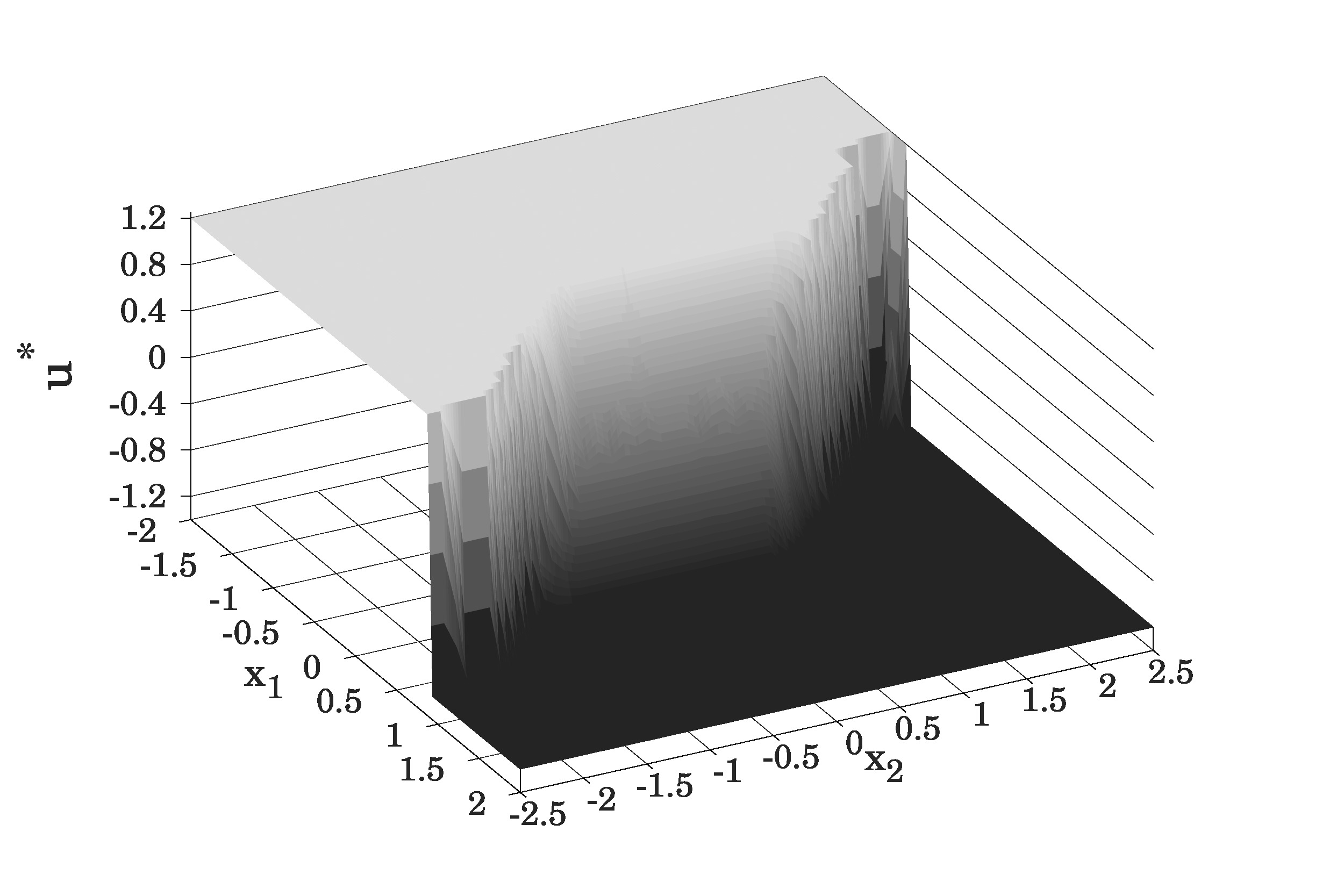}
\includegraphics[width=0.32\linewidth]{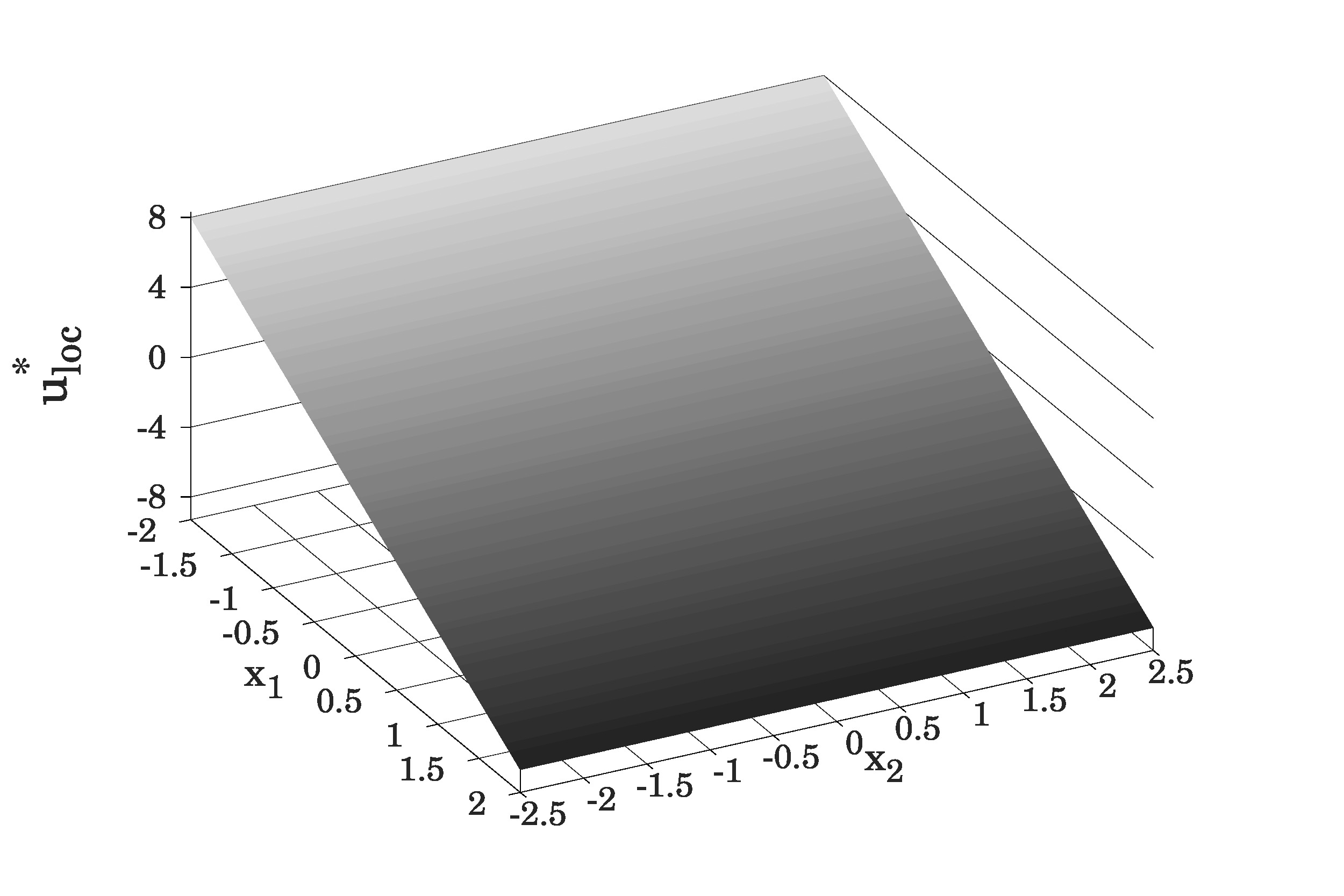}
\includegraphics[width=0.32\linewidth]{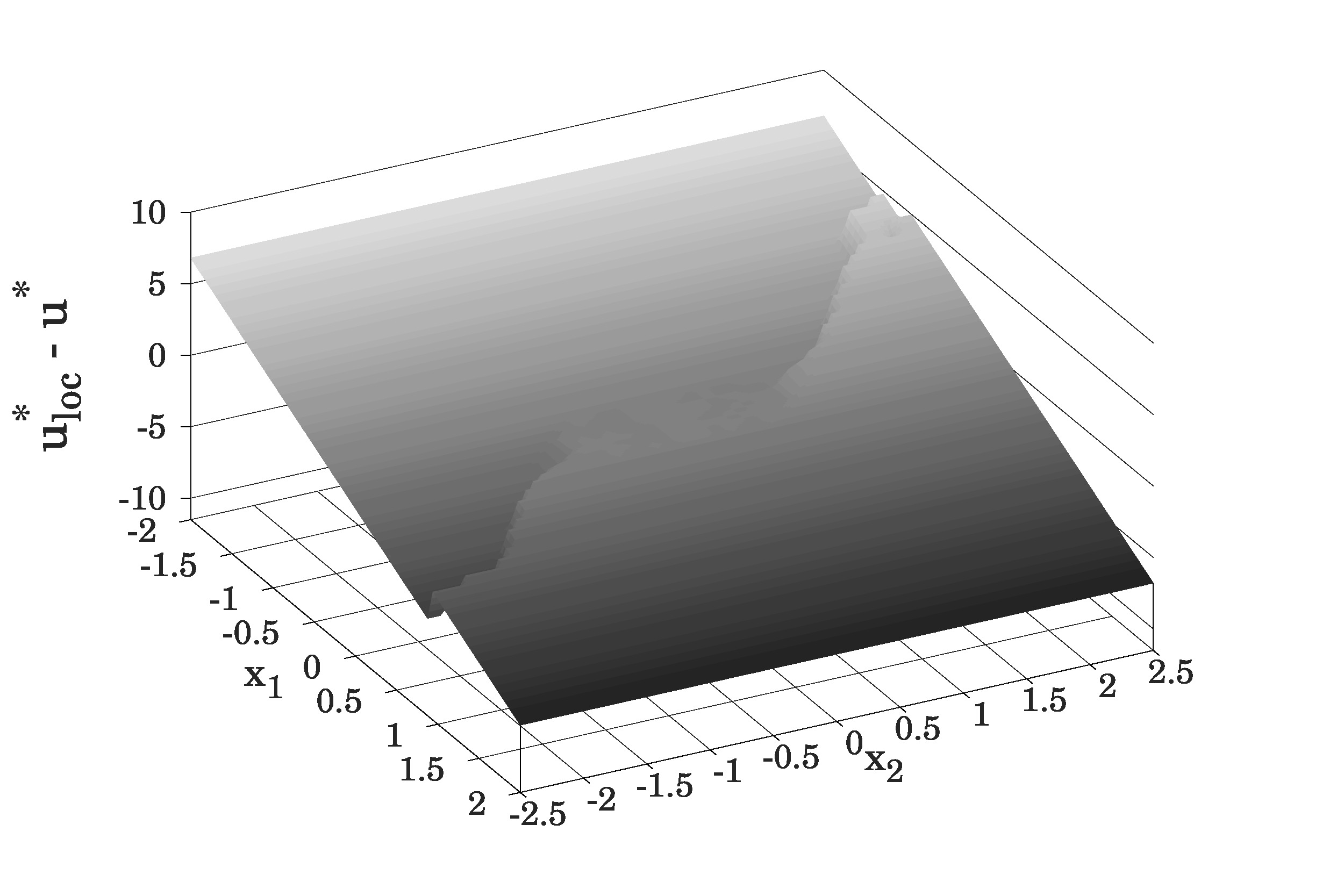}
\end{center}
\bf \caption{\rm The feedback control strategies~$ u^*(\cdot), u^*_{\mathrm{loc}}(\cdot) $ (corresponding to
$ v(\cdot), v_{\mathrm{loc}}(\cdot) $, respectively) and their difference for $ U = [-1.2, 1.2] $ in Example~\ref{Exa_45}.
In order to see the graphs clearer, we do not fix the same scale for the vertical axes in the subfigures.}
\label{Fig_4}
\end{figure}

Fig.~\ref{Fig_5} shows the graphs of the following functions:
\begin{itemize}
\setlength\itemsep{0em}
\item  the shooting state error defined as the square root of the numerical estimate of the minimum quadratic shooting cost
(see Subsection~A.2.2.2 in \hyperlink{Online_App}
{the appendix} and note that, even when the exact minimum value of
the lowest deviation~(A.31) is zero, its approximation does not vanish for $ x^0 \notin \Omega_c $);
\item  the shooting time defined as an approximate minimizer in (A.31) for an optimal shooting reverse-time
characteristic;
\item  the shooting value replacement indicator defined as zero if one arrives at a value less than $ 1 - \varepsilon $ after
one or two attempts to compute $ v \left( x^0 \right) $, and as the absolute difference
$ \, \left| v_1 \left( x^0 \right) - v \left( \hat{x}^0 \right) \right| \, $ in the other case when one uses the first-order
estimation technique proposed in the end of Subsection~A.2.2.2.
\end{itemize}
The shooting state error and the shooting value replacement indicator on the considered grid are small enough to conjecture that
the whole rectangle is contained in the domain of asymptotic null-controllability~$ \mathcal{D}_0 $. However, rigorous
verification of that for the selected bounded control constraint set $ U = [-1.2, 1.2] $ remains an open problem. Another
open question is whether $ \mathcal{D}_0 $ is bounded for a bounded $ U $ or not. Nevertheless, inner estimates of $ \mathcal{D}_0 $
obtained via our numerical approach may often suit practical needs.

\begin{figure}
\begin{center}
\includegraphics[width=0.32\linewidth]{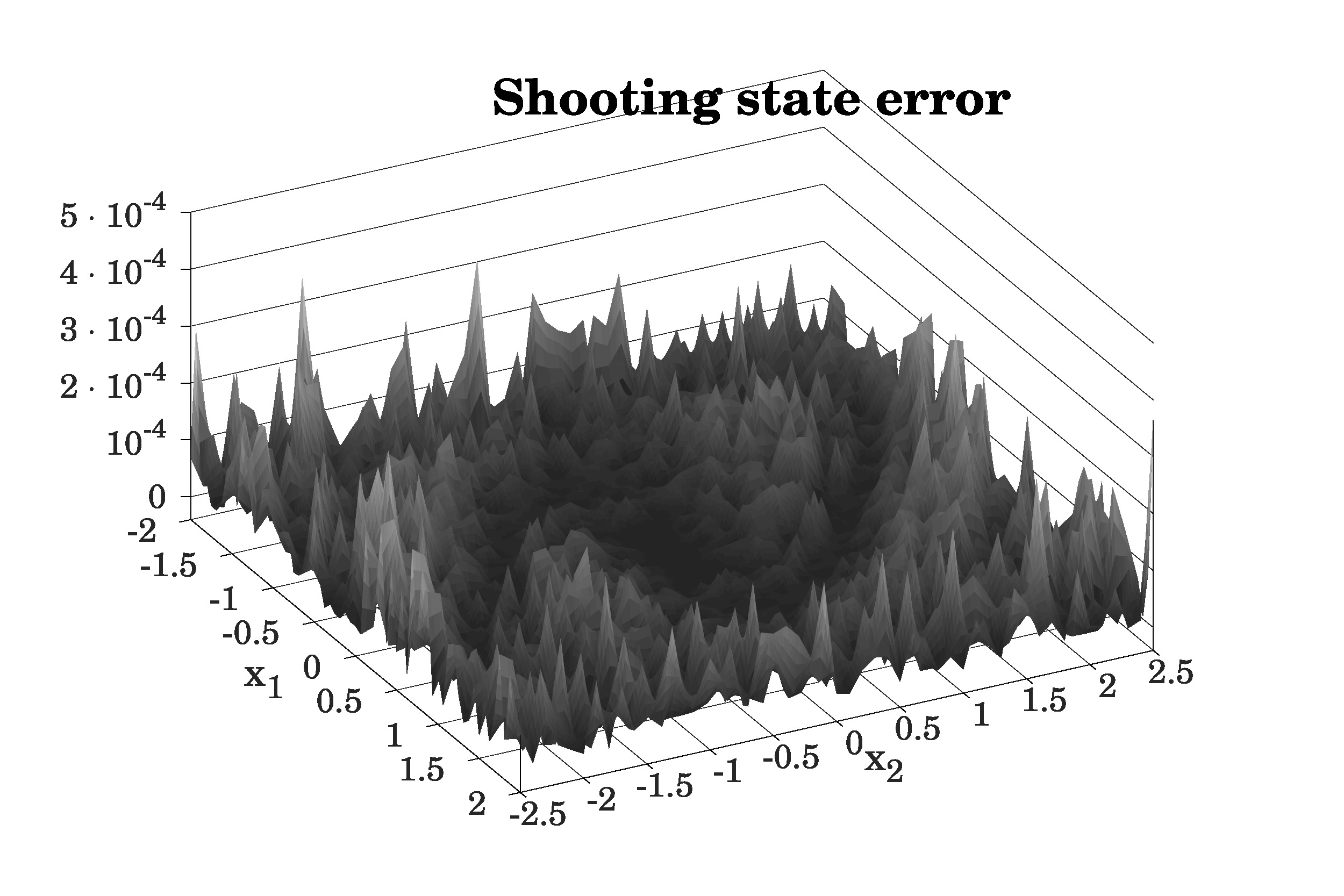}
\includegraphics[width=0.32\linewidth]{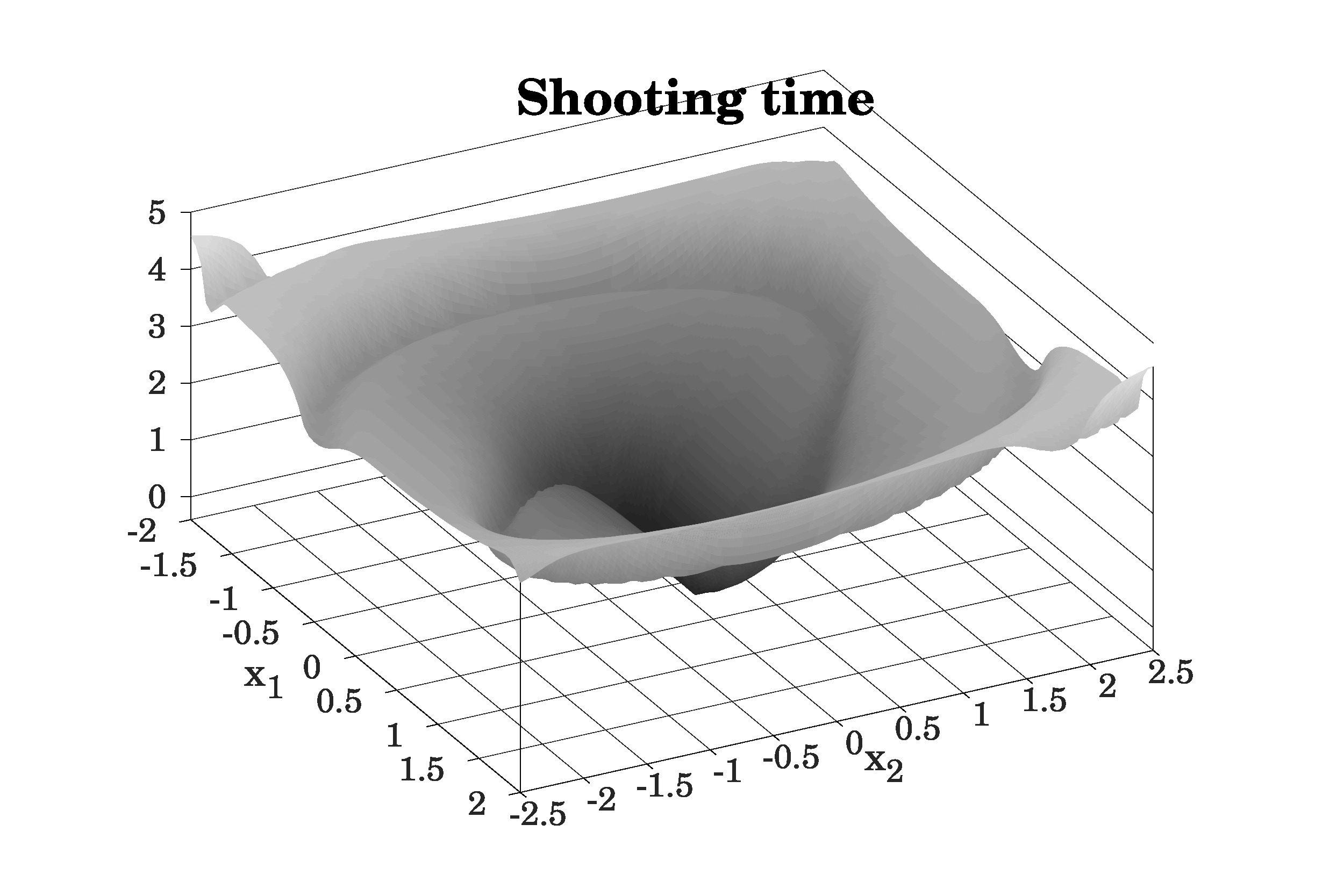}
\includegraphics[width=0.32\linewidth]{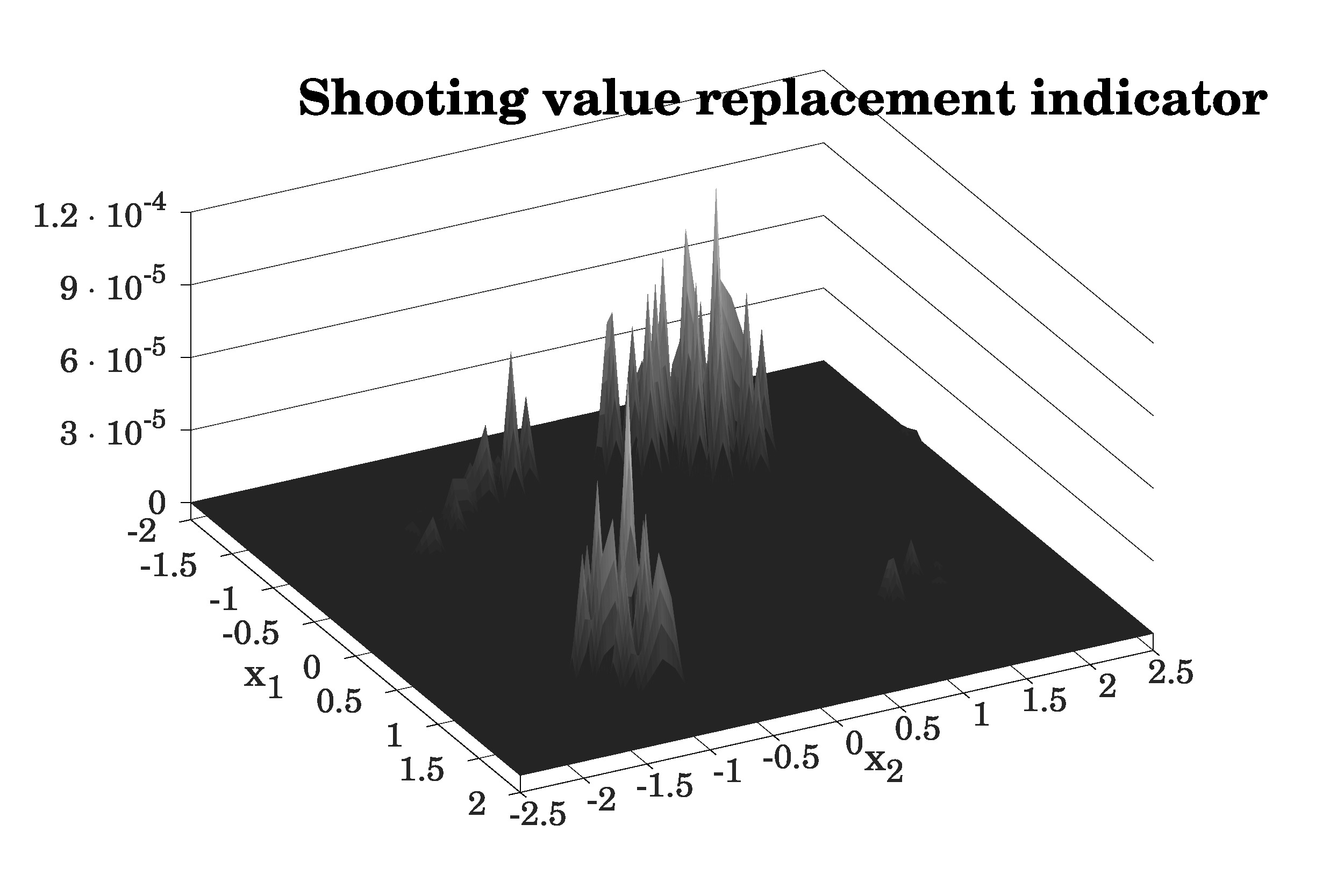}
\end{center}
\bf \caption{\rm The shooting state error, shooting time, and shooting value replacement indicator for $ U = [-1.2, 1.2] $
in Example~\ref{Exa_45}. Different scales are used for the vertical axes in the subfigures.}
\label{Fig_5}
\end{figure}

In order to practically check the obvious fact that, for a sufficiently large $ a $ and $ U = [-a, a] $, the functions
$ v(\cdot) $ and $ u^*(\cdot) $ should coincide in the considered bounded rectangle with $ v_{\mathrm{loc}}(\cdot) $ and
$ u^*_{\mathrm{loc}}(\cdot) $, respectively, we performed numerical simulations for the increased parameter value~$ a = 20 $.
We also reduced $ T_{\max} $ from $ 10 $ to $ 5 $. Moreover, the spatial steps along $ x_1 $-axis and $ x_2 $-axis for the grid
on the rectangle $ \, [-2, 2] \times [-2.5, 2.5] \, $ were increased to $ 0.1 $ and $ 0.15625 $, respectively. All other parameters
kept their values (in particular, $ \: T_{\max, \: \mathrm{recomp.}} \, = \, 20 \: $ remained the same). The results are illustrated
in Fig.~\ref{Fig_6}.

\begin{figure}
\begin{center}
\includegraphics[width=0.32\linewidth]{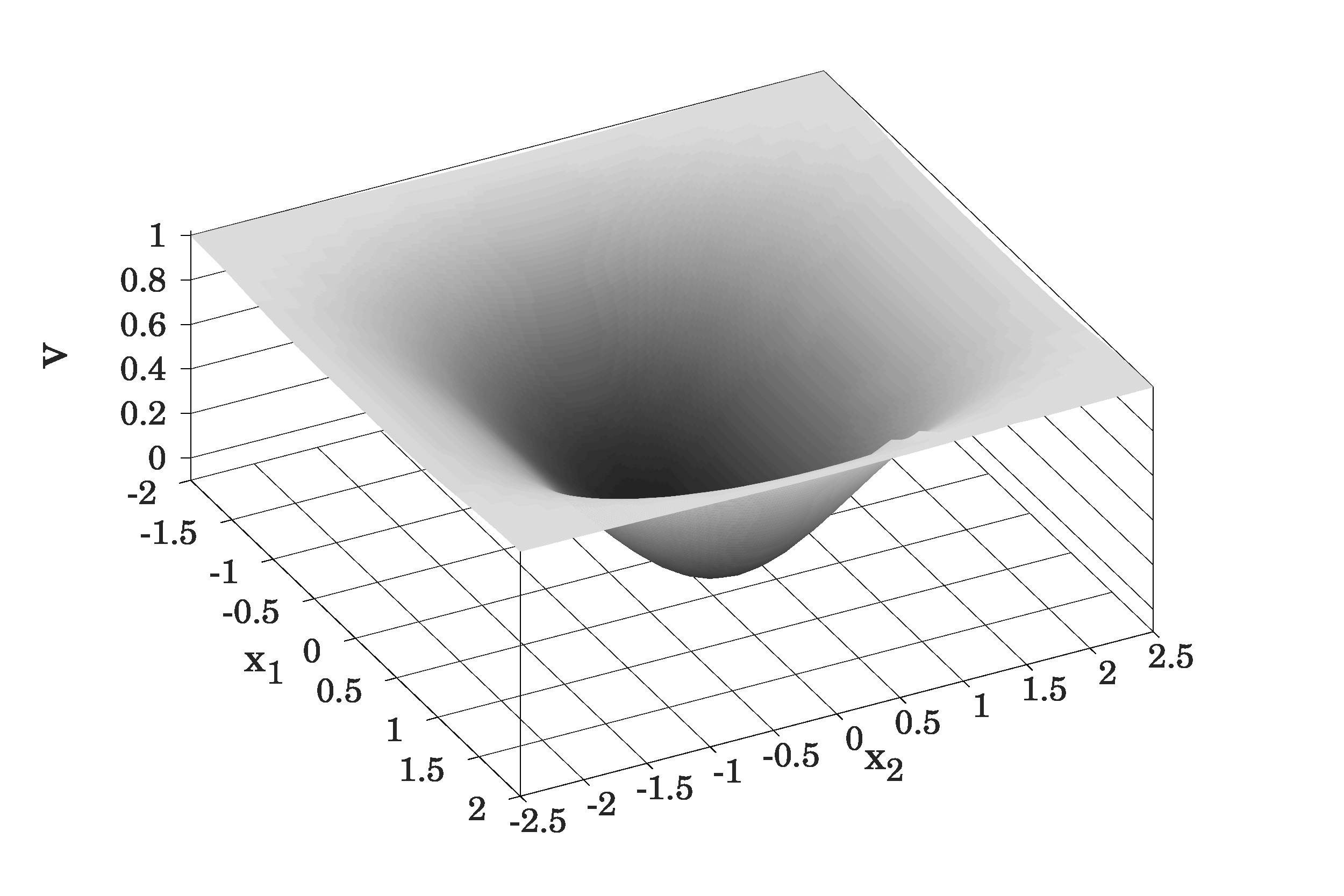}
\includegraphics[width=0.32\linewidth]{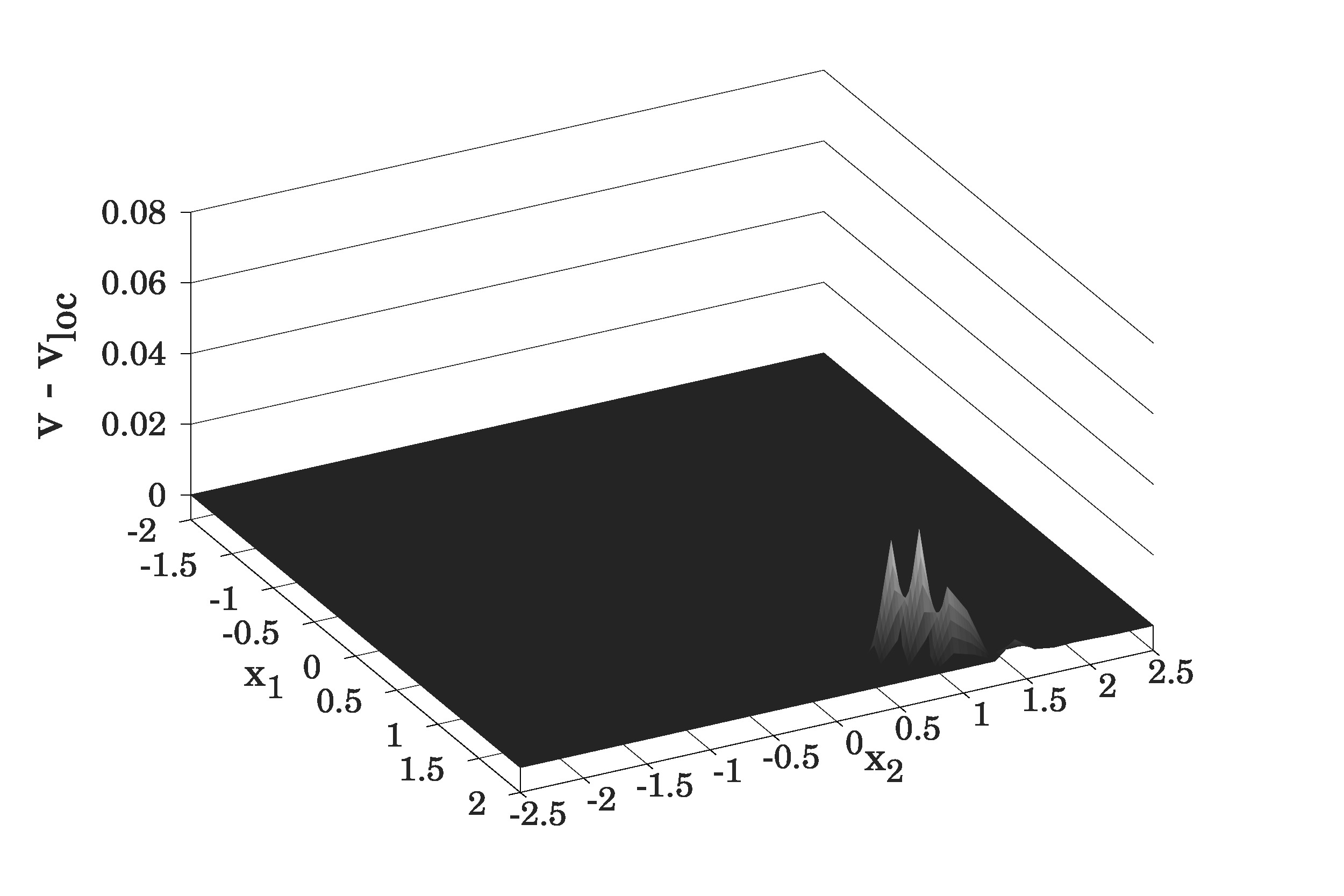} \\
\includegraphics[width=0.32\linewidth]{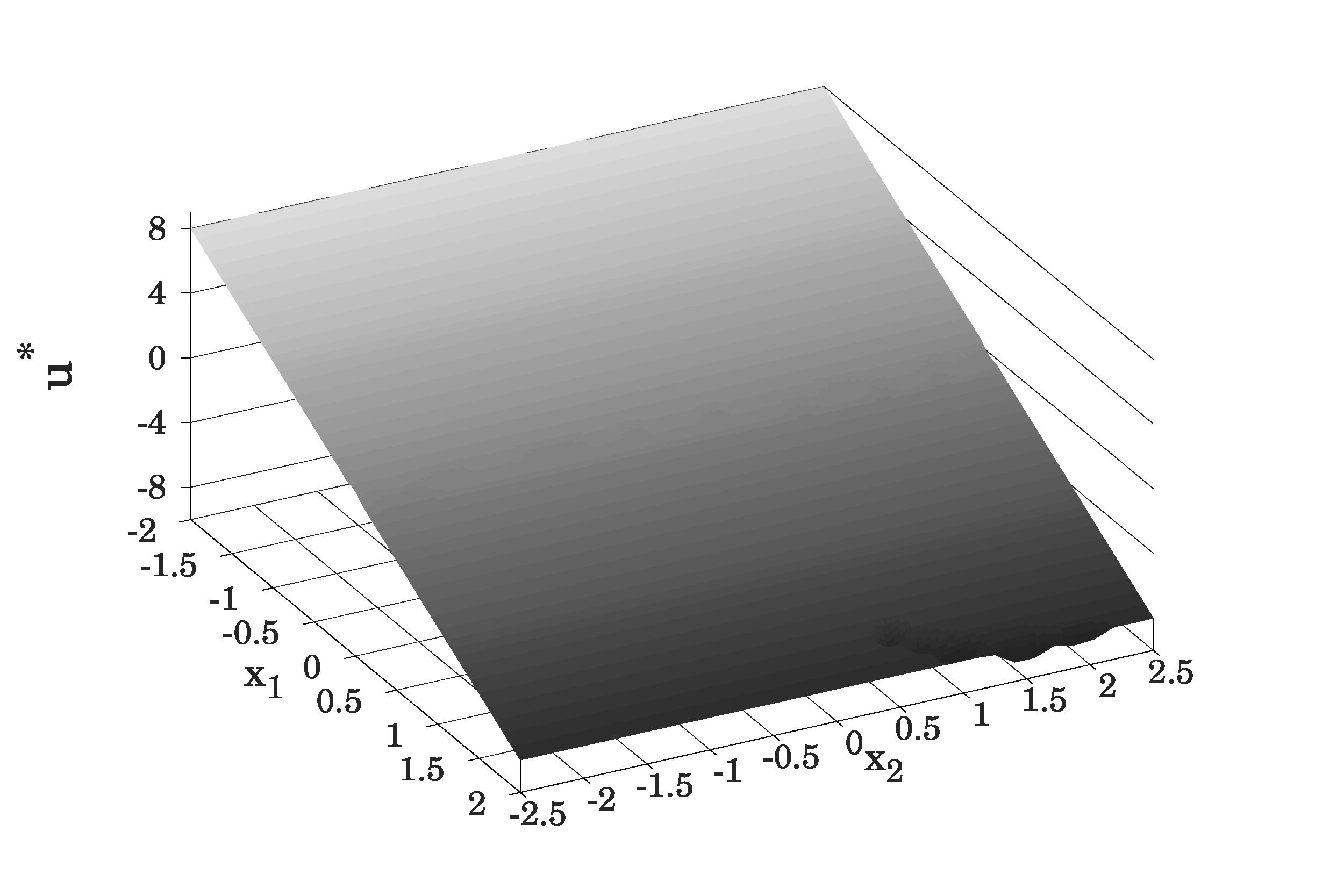}
\includegraphics[width=0.32\linewidth]{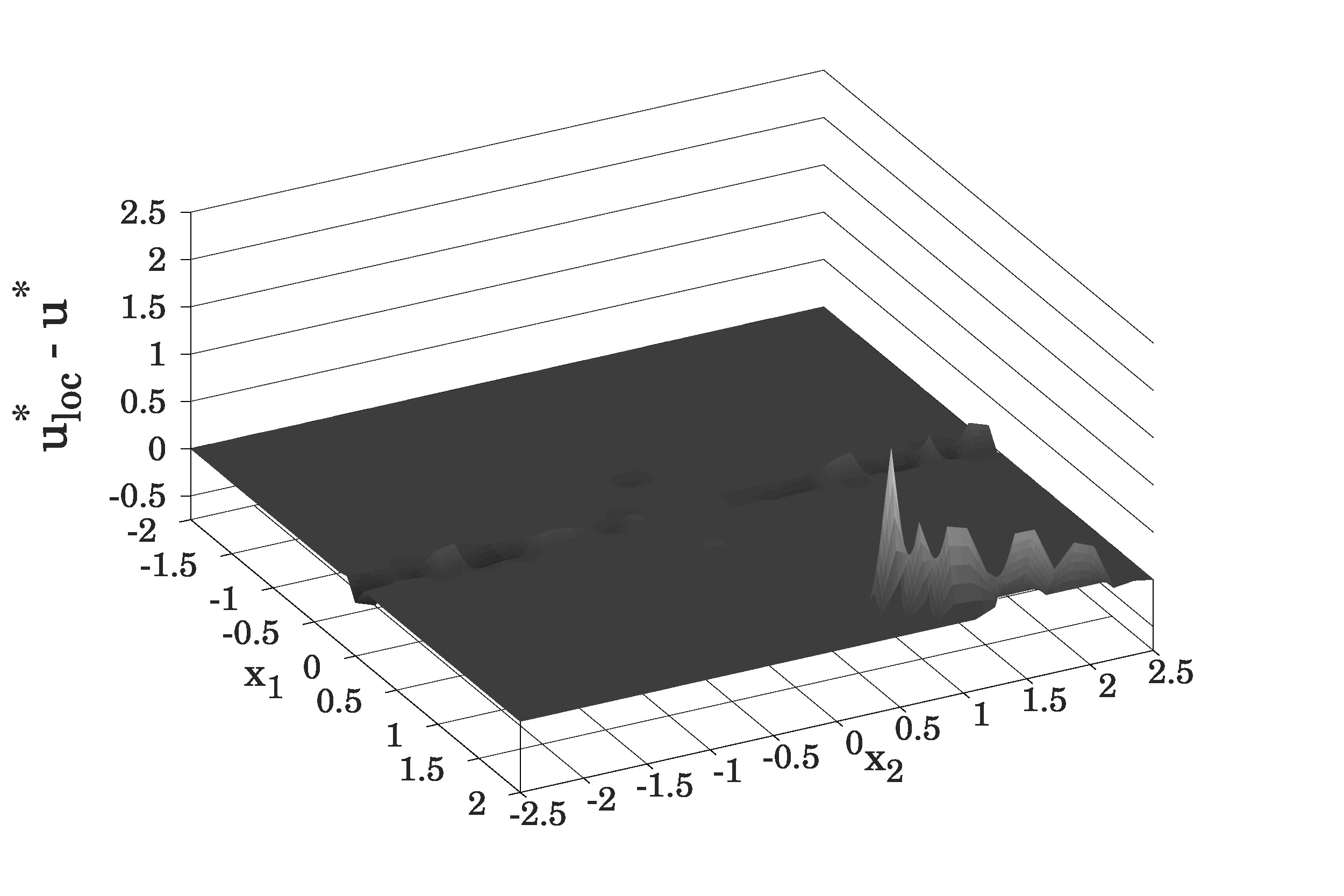}
\end{center}
\bf \caption{\rm The Kruzhkov transformed function~$ v(\cdot) $, the difference between $ v(\cdot) $ and
$ v_{\mathrm{loc}}(\cdot) $, the feedback control strategy~$ u^*(\cdot) $, and the difference between
$ u^*_{\mathrm{loc}}(\cdot) $ and $ u^*(\cdot) $ for $ U = [-20, 20] $ in Example~\ref{Exa_45}. In order to see the graphs
clearer, we do not fix the same scale for the vertical axes in the subfigures.}
\label{Fig_6}
\end{figure}

The average runtime per one initial state was around 18~seconds when obtaining the data for Figs.~\ref{Fig_3}--\ref{Fig_5}
and around 11~seconds when obtaining the data for Fig.~\ref{Fig_6}. Such relatively long runtimes can be explained as follows.
First, we used a rather weak machine, as was already noted in the beginning of this section. Second, our rectangle in the state
space was large enough for constructing a reasonable inner estimate of $ \mathcal{D}_0 $, and the runtimes for the grid nodes
not very far from $ \Omega_c $ were much shorter than the average runtime (for $ x^0 \in \Omega_c $, we put
$ \: V \left( x^0 \right) = V_{\mathrm{loc}} \left( x^0 \right) $,
$ \, u^* \left( x^0 \right) = u_{\mathrm{loc}} \left( x^0 \right) \: $ and did not even need to optimize).  \qed
\end{example}

\begin{example}  \label{Exa_46}  \rm
The dynamics of a planar vertical takeoff and landing (PVTOL) aircraft can be described by the control system
\cite{Fantoni2002,FantoniIFAC2002,Fantoni2006,Hably2006}
\begin{equation}
\left\{ \begin{aligned}
& \dot{x}_1(t) \:\: = \:\: x_2(t), \\
& \dot{x}_2(t) \:\: = \:\: -(1 + u_1(t)) \, \sin \, x_5(t) \,\, + \,\, \alpha \, u_2(t) \, \cos \, x_5(t), \\
& \dot{x}_3(t) \:\: = \:\: x_4(t), \\
& \dot{x}_4(t) \:\: = \:\: (1 + u_1(t)) \, \cos \, x_5(t) \,\, + \,\, \alpha \, u_2(t) \, \sin \, x_5(t) \,\, - \,\, 1, \\
& \dot{x}_5(t) \:\: = \:\: x_6(t), \\
& \dot{x}_6(t) \:\: = \:\: u_2(t), \\
& t \geqslant 0, \\
& x \: = \: (x_1, x_2, x_3, x_4, x_5, x_6)^{\top}, \quad u \: = \: (u_1, u_2)^{\top}, \quad n = 6, \quad m = 2, \\
& x(0) \, = \, x^0 \, \in \, G \, = \, \mathbb{R}^6, \\
& u(\cdot) \: \in \: \mathcal{U} \: \stackrel{\mathrm{def}}{=} \: L^{\infty}_{\mathrm{loc}}([0, +\infty), \, U), \quad
U \subseteq \mathbb{R}^2,
\end{aligned} \right.  \label{Eq_101}
\end{equation}
where the following notation is used (see Fig.~\ref{Fig_7}):
\begin{itemize}
\setlength\itemsep{0em}
\item  $ t $ is a time variable;
\item  $ x_1 $ and $ x_3 $ are normalized quantities that correspond to the horizontal and vertical coordinates of
the center of mass of the aircraft in a fixed inertial frame;
\item  $ x_5 $ is the roll angle that the aircraft makes with the positive horizontal axis;
\item  $ x_2 $, $ x_4 $, and $ x_6 $ are the rates of change of $ x_1 $, $ x_3 $, and $ x_5 $, respectively;
\item  $ u_1 $ and $ u_2 $ are normalized control inputs such that $ 1 + u_1 $ corresponds to the thrust (directed out
the bottom of the aircraft), $ u_2 $ is related to the angular acceleration (rolling moment), and the origin~$ x = 0_6 $
is a steady state for $ u = 0_2 $;
\item  the term $ -1 $ in the fourth dynamical equation represents the normalized gravitational acceleration;
\item  $ \alpha > 0 $ is a constant coefficient that characterizes the coupling between the rolling moment and
the lateral acceleration of the aircraft.
\end{itemize}

\begin{figure}
\begin{center}
\includegraphics[width=0.25\linewidth]{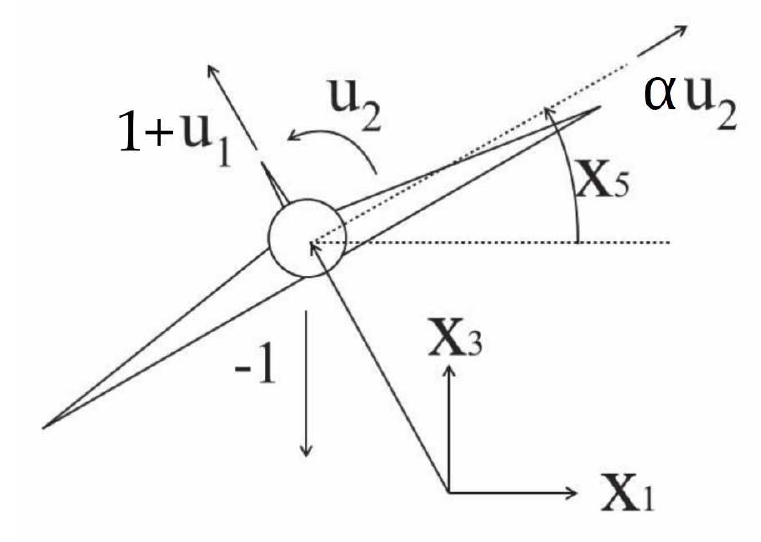}
\end{center}
\bf \caption{\rm The PVTOL aircraft system in Example~\ref{Exa_46}.}
\label{Fig_7}
\end{figure}

Let $ a_1, a_2 $ be positive constants and consider the compact convex control constraint set
\begin{equation}
U \: = \: [-a_1, a_1] \, \times \, [-a_2, a_2].  \label{Eq_102}
\end{equation}
Introduce also the quadratic running cost
\begin{equation}
g(x, u) \:\, = \:\, \frac{\lambda_1}{2} \, \| x \|^2 \: + \: \frac{\lambda_2}{2} \, \| u \|^2 \quad
\forall x \in \mathbb{R}^6 \quad \forall u \in \mathbb{R}^2  \label{Eq_103}
\end{equation}
with positive constants~$ \lambda_1, \lambda_2 $. It is not difficult to verify that a quadratic local
CLF~$ V_{\mathrm{loc}}(\cdot) $ can be constructed via linearization as described in Remark~A.2.2 of Subsection~A.2.1
in \hyperlink{Online_App}
{the appendix} and that Theorems~\ref{Thm_18}, \ref{Thm_22}, \ref{Thm_23}, \ref{Thm_25}
and \ref{Thm_26} can be used with the running cost~(\ref{Eq_103}) and with the mentioned local CLF.

We take
\begin{equation}
\alpha \, = \, 0.1, \quad a_1 \, = \, a_2 \, = \, 5, \quad \lambda_1 \, = \, 0.2, \quad \lambda_2 \, = \, 0.04.
\label{Eq_104}
\end{equation}

The algebraic Riccati equation (see (A.14) in \hyperlink{Online_App}
{the appendix}) was numerically solved via the \texttt{care}
routine in the GNU Octave environment (the \texttt{care} routine in the MATLAB environment can be used as well). The range of appropriate
levels~$ c $ was approximated with the help of the related recommendations in Subsection~A.2.1 of \hyperlink{Online_App}
{the appendix}
(in order to handle possible multi-extremality, $ 20 $ initial guesses were randomly generated for each of the corresponding
finite-dimensional optimization problems). We finally selected
\begin{equation}
c \, = \, 0.017.  \label{Eq_105}
\end{equation}

However, when trying to compute the CLF for the considered six-dimensional (three-degree-of-freedom) system at some
states even not far from $ \Omega_c $ via a characteristics based implementation similar to that used in Example~\ref{Exa_45},
we faced huge difficulties in achieving a suitable shooting accuracy in the auxiliary problem for the reverse-time
characteristics (even with the implicit Rosenbrock scheme~\cite[\S 17.5.1]{PressNR2007} used instead of the explicit
Runge--Kutta scheme for numerical integration of ODEs). We hence used the ACADO Toolkit~\cite{Houska2011,ACADO_Manual}
implementing a direct approximation method for optimal open-loop control problems. The other software packages mentioned in
Subsection~A.2.3 of \hyperlink{Online_App}
{the appendix} involve more advanced and efficient direct collocation
techniques and could also be applied. The ACADO Toolkit was chosen due to its relative simplicity, and also because its
capabilities were enough for the purposes of this example. Regarding the characteristics based framework of Subsection~A.2.2
in \hyperlink{Online_App}
{the appendix}, it may help to numerically treat the current example if its implementation is
modified in order to involve also multiple shooting or indirect collocation as applied to the characteristic system (see
the general discussion of these techniques, e.\,g., in \cite{Rao2010,Trelat2012}), but we leave that for future investigation.

We launched the ACADO Toolkit with the multiple shooting option, the maximum time horizon~$ 20 $, the tolerance~$ 10^{-6} $
for the default Runge--Kutta integrator, the Karush--Kuhn--Tucker tolerance~$ 10^{-4} $ (involved in the practical convergence
criterion for the sequential quadratic programming algorithm), and with $ 40 $ control intervals (these were time subintervals of
equal length, and the constrained numerical optimization was performed over piecewise constant control strategies which might
switch only at the endpoints of the subintervals).

For testing the performance and robustness of the MPC algorithm formulated in the beginning of Subsection~A.2.4 in
\hyperlink{Online_App}
{the appendix}, we also consider a stochastic perturbation of the system~(\ref{Eq_101}).
The noise is included in the second, fourth, and sixth dynamical equations (describing the accelerations for the three
degrees of freedom). Let us write the resulting system:
\begin{equation}
\left\{ \begin{aligned}
& \dot{x}_1(t) \:\: = \:\: x_2(t), \\
& \mathrm{d} x_2(t) \:\: = \:\: (-(1 + u_1(t)) \, \sin \, x_5(t) \,\, + \,\, \alpha \, u_2(t) \, \cos \, x_5(t)) \:
\mathrm{d} t \:\, + \:\, \sigma_2 \, \mathrm{d} w_2(t), \\
& \dot{x}_3(t) \:\: = \:\: x_4(t), \\
& \mathrm{d} x_4(t) \:\: = \:\: ((1 + u_1(t)) \, \cos \, x_5(t) \,\, + \,\, \alpha \, u_2(t) \, \sin \, x_5(t) \,\, - \,\, 1) \:
\mathrm{d} t \:\, + \:\, \sigma_4 \, \mathrm{d} w_4(t), \\
& \dot{x}_5(t) \:\: = \:\: x_6(t), \\
& \mathrm{d} x_6(t) \:\: = \:\: u_2(t) \, \mathrm{d} t \:\, + \:\, \sigma_6 \, \mathrm{d} w_6(t), \\
& t \geqslant 0, \\
& x(0) \, = \, x^0 \, \in \, \mathbb{R}^6.
\end{aligned} \right.  \label{Eq_106}
\end{equation}
Here $ x^0 $ is a deterministic initial state, $ \sigma_2 $, $ \sigma_4 $, and $ \sigma_6 $ are nonnegative constants
(noise intensity parameters), $ \, (w_2(\cdot), w_4(\cdot), w_6(\cdot)) \, $ is a three-dimensional standard Brownian
motion (Wiener process) on the time interval~$ [0, +\infty) $, and the stochastic ordinary differential equations are
understood in the It\^o sense. An open-loop control strategy can also represent a stochastic process if it is obtained
from a closed-loop map. Let us assess the control performance (quality) on a finite time interval~$ [0, T] $ through
the mean value
\begin{equation}
\mathbb{E} \left[ \int\limits_0^T \| x(t) \| \: \mathrm{d} t \right] \:\, = \:\,
\int\limits_0^T \mathbb{E} \, \| x(t) \| \: \mathrm{d} t.  \label{Eq_107}
\end{equation}
The lower this value, the higher the control quality. The control goal is therefore interpreted as mitigating the random
vibrations whose strength on $ [0, T] $ is given by (\ref{Eq_107}).

We select
\begin{equation}
x^0 \: = \: \left( 2, \, 3, \, 4, \, 1, \, \frac{\pi}{3} \, , \, 1 \right)^{\top}  \label{Eq_108}
\end{equation}
(as in \cite[Section~5]{FantoniIFAC2002}),
\begin{equation}
T \, = \, 15, \quad \sigma_2 \, = \, \sigma_4 \, = \, \sigma_6 \, = \, \sigma,  \label{Eq_109}
\end{equation}
and consider the two cases
\begin{equation}
\sigma \, = \, 0 \:\: \mbox{(the deterministic case)} \:\:\: \mbox{and} \:\:\: \sigma \, = \, 0.08.  \label{Eq_110}
\end{equation}

According to the MPC algorithm, we implemented the piecewise constant control policy that was recomputed every
$ \, \Delta t_{\mathrm{recomp.}} \, = \, 0.1 \, $ time units as the stabilizing control action at the current state.
When the state lied outside $ \Omega_c $, the control action was approximated by applying the ACADO Toolkit to
the original deterministic system. Otherwise, the value of the locally stabilizing linear feedback
(as mentioned in Subsection~A.2.1 of \hyperlink{Online_App}
{the appendix}) at the current state in $ \Omega_c $ was used.
The It\^o stochastic differential equations were solved via the Euler--Maruyama scheme that coincides with the Milstein
scheme if the noise intensity matrix is constant and diagonal~\cite{KloedenPlaten1995,Carletti2006}. The latter condition
obviously holds for the system~(\ref{Eq_106}). The corresponding time step was set as $ \, \Delta t_{\mathrm{SDE}} = 10^{-5} $.
Under certain smoothness and Lipschitz continuity conditions on the drift vector function and noise intensity matrix function,
the Milstein scheme has the first strong convergence order (while the order of the Euler--Maruyama scheme in general
equals~$ 0.5 $ if the noise intensity matrix is not constant). The first order of accuracy should be preserved in our MPC
implementation, because the control policy is piecewise constant and the ratio
$ \: \Delta t_{\mathrm{recomp.}} \, / \, \Delta t_{\mathrm{SDE}} \: = \: 10^4 \: $ is integer.

The black solid curves in Fig.~\ref{Fig_8} indicate estimates of the mean values $ \, \mathbb{E} \, \| x(t) \| \, $ and
standard deviations $ \, \sqrt{\mathrm{Var} \, \| x(t) \|} \, $ on the time interval~$ [0, T] $ for our MPC implementation.
The deterministic and stochastic cases~(\ref{Eq_110}) are illustrated. For the stochastic case, $ N = 200 $ Monte Carlo
iterations were performed, and $ x^{[i]}(\cdot) $ denotes the state trajectory at the $ i $-th iteration, $ i = \overline{1, N} $.

\begin{figure}
\begin{center}
\includegraphics[width=0.49\linewidth]{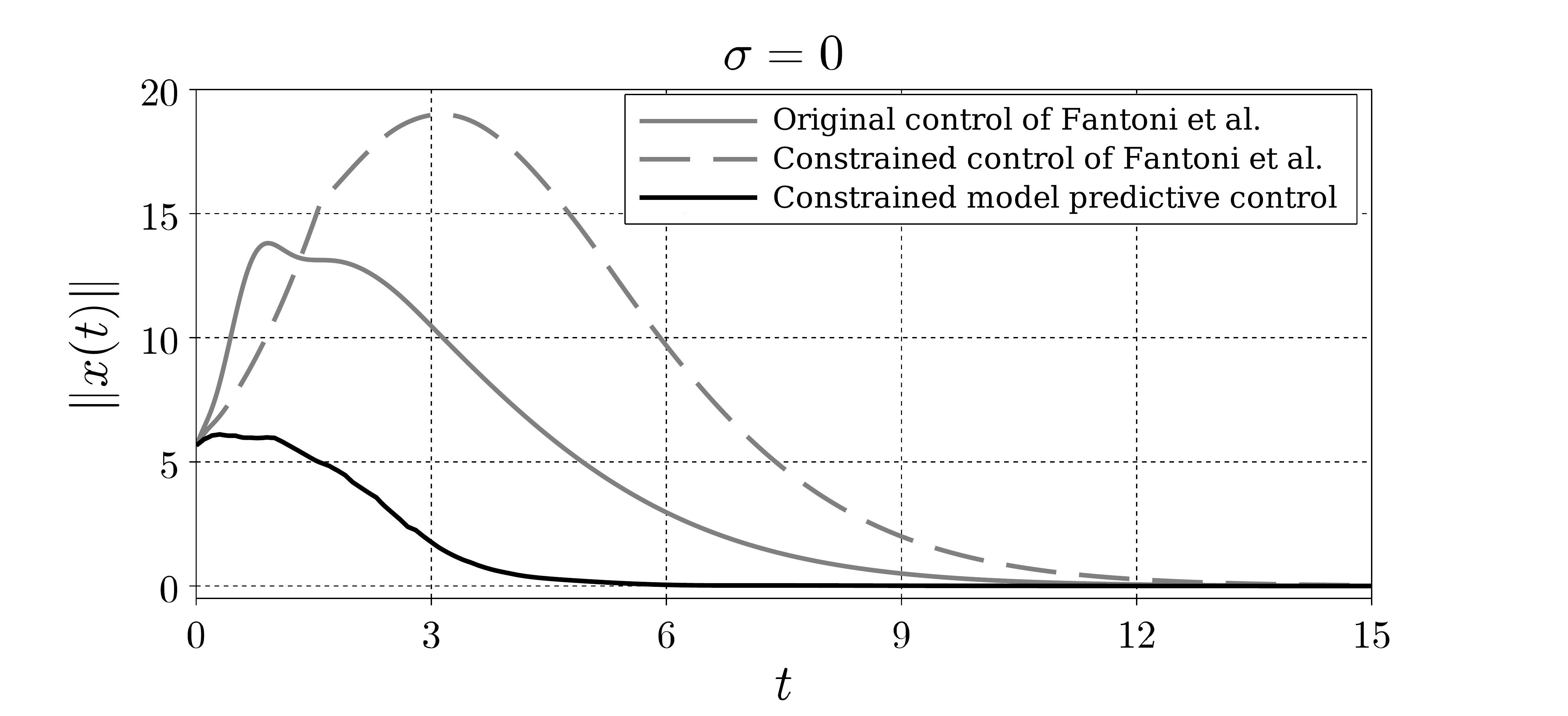}
\includegraphics[width=0.49\linewidth]{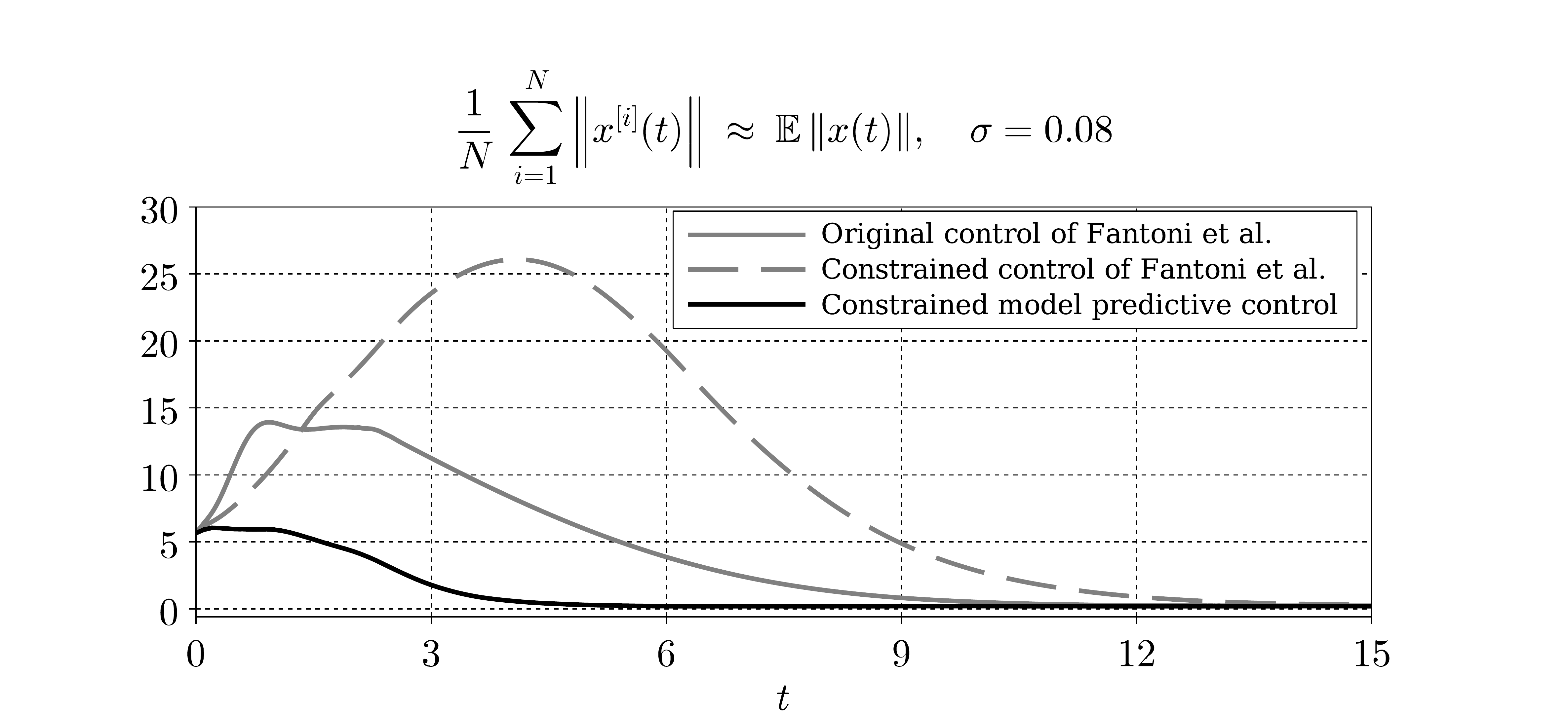} \\
\includegraphics[width=0.65\linewidth]{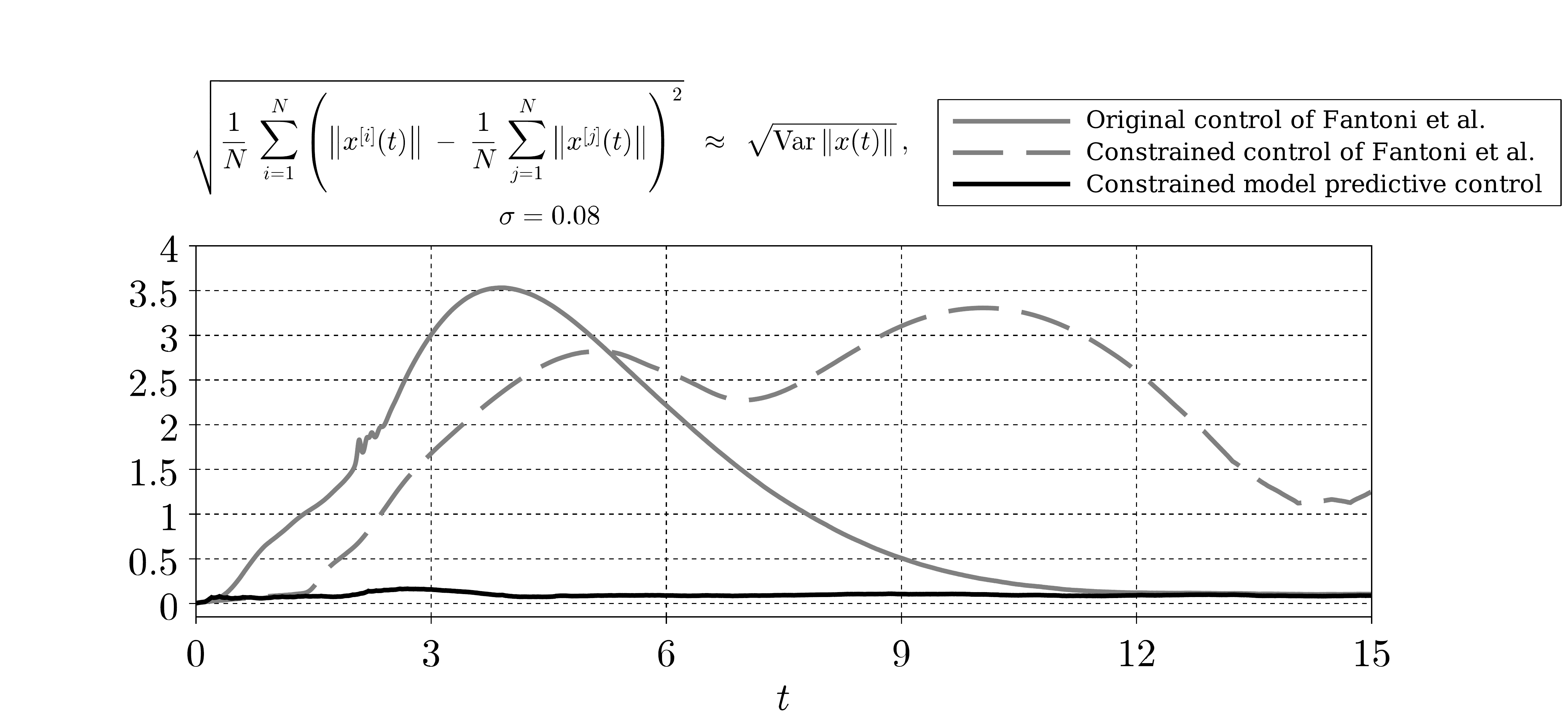}
\end{center}
\bf \caption{\rm Estimates of the mean values $ \, \mathbb{E} \, \| x(t) \| \, $ and standard deviations
$ \, \sqrt{\mathrm{Var} \, \| x(t) \|} \, $ for the original and constrained versions of the stabilizing feedback control
law of Fantoni~et~al.~\cite{FantoniIFAC2002,Fantoni2006} and for the MPC implementation in Example~\ref{Exa_46}. For
the noise intensity parameter $ \, \sigma_2 = \sigma_4 = \sigma_6 = \sigma, \, $ the two cases~(\ref{Eq_110}) are
considered. In order to see the graphs clearer, we do not fix the same scale for the vertical axes in the subfigures.}
\label{Fig_8}
\end{figure}

For comparison, we also integrated the systems~(\ref{Eq_101}) and (\ref{Eq_106}) with the substituted continuous
feedback control strategy that was developed and tested on real experiments by Fantoni~et~al.~\cite{FantoniIFAC2002,Fantoni2006}.
The corresponding analytical representation was obtained after a change of the state and control variables that
transformed the system~(\ref{Eq_101}) to a certain form without the coupling coefficient~$ \alpha $. This strategy was
established to be locally stabilizing for the deterministic PVTOL system and to have a rather wide region of asymptotic
null-controllability (see \cite[Theorem~3.1]{FantoniIFAC2002} or \cite[Theorem~1]{Fantoni2006}). It has to be noted
that the expressions for the first and second time derivatives of the auxiliary variable~$ r_1 $ in
\cite[(16) and (17) on page~414]{Fantoni2006} are incorrectly written ($ 2 \, / \cos \, \theta \, $ in the formula for
$ \dot{r}_1 $ should be replaced with $ \, 2 \, \tan \, \theta, \, $ and the formula for $ \ddot{r}_1 $ should be
accordingly modified). However, there is evidence that the correct relations were used in the further theoretical and
practical investigation of~\cite{Fantoni2006}. In Fig.~\ref{Fig_8}, the solid gray curves correspond to the original
strategy of Fantoni~et~al., while the dashed gray curves indicate its constrained (saturating) version defined as
the orthogonal projection to the compact convex control constraint set~(\ref{Eq_102}). The latter control law was not
considered by Fantoni~et~al., and we tested it to see how the saturation would reduce the control performance.
The Euler--Maruyama scheme was used with the same stepsize $ \, \Delta t_{\mathrm{SDE}} = 10^{-5}, \, $ and the number of
Monte Carlo iterations was again~$ N = 200 $. The explicit control representations could be handled very fast, so no MPC
had to be arranged.

Fig.~\ref{Fig_8} shows that the constrained MPC has an essentially better performance than the original unconstrained
strategy of Fantoni~et~al. and the saturating version of the latter. The higher robustness of the MPC with respect to
stochastic uncertainties can be seen as well. In general, random vibrations can be effectively attenuated only for
moderate noise intensities.

The average runtime of computing the CLF and the related control action at a single state outside $ \Omega_c $ via
the ACADO Toolkit was around 5~seconds on our relatively weak PC. This is much faster and more efficient compared to using
our characteristics based implementation, although the latter is more justified from the theoretical point of view. Besides, more
advanced and accurate optimal control solvers building on direct collocation methods (see Subsection~A.2.3 in
\hyperlink{Online_App}
{the appendix}) often work noticeably faster than ACADO. However, the computational cost of such
MPC implementations may still be rather high for real-time engineering applications. In general, there is a crucial trade-off
between increasing the overall control quality and speeding up the online control evaluation.

In Subsection~A.2.4 of \hyperlink{Online_App}
{the appendix}, we discuss how sparse grids can be incorporated in
the MPC algorithm with the aim to make it faster though less accurate. For investigating the applicability of the modified MPC
algorithm, we estimated the accuracy of a typical high-dimensional sparse grid interpolation technique in the current example.
We used the source code of the C++ library SPARSE\_INTERP\_ND~\cite{BurkardtCode} involving Clenshaw--Curtis nodes,
hierarchical Smolyak's constructions, and weighted sums of polynomial interpolants. First, we built the Clenshaw--Curtis
sparse grid of level~$ 7 $ consisting of $ 44689 $ distinct nodes on the six-dimensional cube~$ [-1, 1]^6 $. At each of
the nodes and also at $ 1000 $ points randomly generated from the uniform distribution on $ [-1, 1]^6 $, the CLF~$ V(\cdot) $
as well as the related control action and costate were approximated by means of the GPOPS--II
software~\cite{PattersonRao2014,GPOPS_II_User_guide} (the latter involves direct collocation and is in general more effective
than ACADO). In order to reduce the interpolation errors caused by the nonsmoothness of the CLF and the discontinuity of
the costate on the surface~$ l_c = \partial \Omega_c $, we decreased the parameter~$ c $ to $ 4 \cdot 10^{-4} $ and hence
reduced the area of $ l_c $ (however, it becomes more difficult to solve exit-time optimal control problems after reducing
terminal sets). For the numerical optimization via GPOPS--II, the IPOPT nonlinear programming solver and the default collocation
method were selected, and the corresponding tolerance was set as $ 10^{-7} $. Regarding the mesh refinement algorithm, we used
the Patterson--Rao and Liu--Rao--Legendre methods with default parameters (if the numerical optimization process for a particular
initial state did not converge with the Patterson--Rao mesh refinement option, it was rerun with the Liu--Rao--Legendre option).
With the help of the obtained data, we then evaluated the errors of the sparse grid interpolation at the $ 1000 $ randomly generated
states (the values computed directly via GPOPS--II were compared with the interpolation estimates). The average relative errors
turned out to be very large, more than $ 50\% $ for the CLF as well as for the related feedback control and costate. The infinitesimal
decrease condition for the CLF~$ V(\cdot) $ was violated at more than half of the selected states after substituting the costate
interpolation estimates instead of the gradient of $ V(\cdot) $. One might think of considering a higher-level sparse grid, but
the number of nodes and the complexity of interpolation would then dramatically increase, while the accuracy still might not become
acceptable. Note also that the cube~$ [-1, 1]^6 $ may not be large enough for practical purposes, while the interpolation for
the sparse grid of the same type and level on a larger parallelepiped is even less accurate.

The sparse grid interpolation was therefore highly inaccurate in this example, even though similar tests for other optimal
control problems in \cite{KangWilcox2017} were successful. Thus, the range of applicability of sparse grid frameworks to
solving feedback control problems and to reducing the complexity of online computations for MPC is a relevant
subject of future research. Other techniques of scattered data interpolation, such as the Kriging method originally arising
from geostatistics (see, e.\,g., \cite[\S 3.7]{PressNR2007}), may be tested as well.  \qed
\end{example}

\section{Conclusion}

In this work, we used exit-time optimal control settings in order to obtain global CLF characterizations,
which could lead to curse-of-dimensionality-free approaches to feedback stabilization for certain classes of
deterministic nonlinear control systems described by ODEs. Both theoretical and practical aspects were
investigated. The computation of the values of the CLFs and stabilizing feedbacks at any selected states could be
reduced to finite-dimensional nonlinear programming problems via characteristics based or direct approximation
techniques as applied to specific exit-time optimal control problems. Recall that the direct numerical frameworks are
less justified from a theoretical perspective but may be more robust in computations, compared to using the method of
characteristics. We also indicated that our framework could be incorporated in MPC schemes for online stabilization.

Unlike Example~\ref{Exa_45} with two-dimensional state space, Example~\ref{Exa_46} with six-dimensional state space
could not be successfully treated via our characteristics based numerical implementation. We hence employed
a direct approximation technique for Example~\ref{Exa_46}. The range of practical applicability of characteristics
based frameworks to exit-time optimal control problems arising in stabilization problems with relatively high
state space dimensions is worth studying further. In particular, an efficient implementation may combine
the framework of Subsection~A.2.2 in \hyperlink{Online_App}
{the appendix} with multiple shooting or
indirect collocation~\cite{Rao2010,Trelat2012}.

Another remaining dilemma is how to reasonably reduce the complexity of online computations in the related MPC schemes,
while preserving a suitable level of accuracy and the stabilization property. In Example~\ref{Exa_46}, a typical
sparse grid framework could not achieve that, although similar tests for other optimal control problems in
\cite{KangWilcox2017} showed acceptable results. Thus, the range of applicability of sparse grids to solving
feedback control problems may also be an interesting subject of future research. Other techniques of scattered data
interpolation (such as the Kriging method \cite[\S 3.7]{PressNR2007}) may be additionally tested. Moreover, since
the characteristics based techniques and advanced direct collocation methods enable costate estimation for optimal control
problems (recall the relation between the costates and the gradient of a value function, as well as the local
Lipschitz continuity properties in Theorems~\ref{Thm_18},~\ref{Thm_40}), it is relevant to design methods for
constructing piecewise affine global CLFs in relatively high dimensions. The framework of \cite{HafsteinKellet2015}
may help in that effort.

\section*{Acknowledgements}

This work was supported in part by AFOSR/AOARD grant FA2386-16-1-4066. We thank Professor Wei~Kang, Naval Postgraduate School,
for enlightening discussions and comments.

\hypertarget{Online_App}{}

\appendix

\setcounter{equation}{0} \renewcommand{\theequation}{A.\arabic{equation}}
\setcounter{section}{0} \renewcommand{\thesection}{A.\arabic{section}}

\section{Proofs of some auxiliary results}

\subsection{Proof of Lemma~2.12}

\begin{lemmaaux}  \label{Thm_A_1_A}
If $ E \subseteq \mathbb{R}^n $ is an open set and a function $ \: \varphi \, \colon \, E \to \mathbb{R} \: $
is Lipschitz continuous with constant~$ C > 0 ${\rm ,} then
\begin{equation}
\| \zeta \| \, \leqslant \, C \sqrt{n} \quad \forall \, \zeta \, \in \, \mathrm{D}^-_{\mathrm{P}} \varphi(x) \quad
\forall x \in E.  \label{Eq_A_1}
\end{equation}
\end{lemmaaux}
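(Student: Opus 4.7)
The plan is to use the definition of the proximal subdifferential directly and test it along the coordinate directions. Recall that $\zeta \in \mathrm{D}^-_{\mathrm{P}} \varphi(x)$ means there exist $\sigma > 0$ and $\rho > 0$ with $\mathrm{B}_\rho(x) \subseteq E$ such that
\[
\varphi(y) \,\geq\, \varphi(x) \,+\, \langle \zeta, y - x\rangle \,-\, \sigma \, \|y - x\|^2 \quad \forall\, y \in \mathrm{B}_\rho(x).
\]
For each coordinate index $i = \overline{1,n}$ and each sufficiently small $t \in \mathbb{R}$, the point $y = x + t e_i$ lies in $\mathrm{B}_\rho(x) \subseteq E$, so the proximal inequality gives $\varphi(x + t e_i) - \varphi(x) \geq t\,\zeta_i - \sigma t^2$, while the Lipschitz hypothesis gives $\varphi(x + t e_i) - \varphi(x) \leq C\,|t|$.

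First I would combine these two bounds. Taking $t > 0$, dividing by $t$, and sending $t \to +0$ yields $\zeta_i \leq C$; taking $t < 0$, setting $t = -s$ with $s > 0$, dividing by $s$, and sending $s \to +0$ yields $-\zeta_i \leq C$. Hence $|\zeta_i| \leq C$ for every $i = \overline{1,n}$, and squaring and summing produces
\[
\|\zeta\|^2 \,=\, \sum_{i=1}^{n} \zeta_i^2 \,\leq\, n\,C^2,
\]
which is exactly the claimed estimate~(\ref{Eq_A_1}).

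There is no real obstacle here; the only point requiring a line of care is choosing the test increment $t$ small enough that $x + t e_i$ belongs simultaneously to the open neighborhood on which the proximal inequality holds and to the open set $E$ on which the Lipschitz condition is assumed. Both conditions are secured by $|t| < \rho$, which is available since $\mathrm{B}_\rho(x) \subseteq E$. Because the bound is required to hold uniformly in $\zeta$ and $x$ with no dependence on the modulus $\sigma$ of the proximal subgradient, the limit $t \to +0$ is essential; any approach that tried to avoid this limit (for instance by choosing a single convenient $t$) would introduce an additional $\sigma$-dependent error and not yield the clean bound $C\sqrt{n}$.
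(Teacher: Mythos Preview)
Your proof is correct and follows essentially the same route as the paper: combine the proximal-subgradient inequality with the Lipschitz bound, test along each coordinate direction $e_i$ with a small increment, let the increment tend to zero to obtain $|\zeta_i|\le C$, and sum to reach $\|\zeta\|\le C\sqrt{n}$. The only differences are cosmetic (notation and the order of presentation).
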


\begin{proof}
Let $ x \in E $ and $ \, \zeta \in \mathrm{D}^-_{\mathrm{P}} \varphi(x) $. According to the Lipschitz continuity of
$ \varphi(\cdot) $ and the definition of a proximal subgradient (see \cite[p.~5]{ClarkeLedyaev1998}), there exist
positive numbers~$ \varepsilon_0, \sigma $ (depending on $ x $) such that $ \, \mathrm{B}_{\varepsilon_0}(x) \subseteq E \, $
and
\begin{equation}
C \, \| x' - x \| \:\: \geqslant \:\: \varphi(x') \: - \: \varphi(x) \:\: \geqslant \:\:
\left< \zeta, \, x' - x \right> \: - \: \sigma \, \| x' - x \|^2  \label{Eq_A_2}
\end{equation}
for all $ \, x' \in \mathrm{B}_{\varepsilon_0}(x) $. For every $ \: i \, \in \, \{ 1, 2, \ldots, n \}, \: $ let
$ e_i \in \mathbb{R}^n $ be such that its $ i $-th coordinate equals $ 1 $ and all the other coordinates vanish.
Take arbitrary $ \: i \, \in \, \{ 1, 2, \ldots, n \} \: $ and $ \varepsilon \in (0, \varepsilon_0) $. Then
(\ref{Eq_A_2}) reduces to
$ \: C \: \geqslant \: \left< \zeta, e_i \right> \, - \, \sigma \varepsilon \: $
for $ \: x' \, = \, x + \varepsilon e_i \: $ and to
$ \: C \: \geqslant \: -\left< \zeta, e_i \right> \, - \, \sigma \varepsilon \: $
for $ \: x' \, = \, x - \varepsilon e_i $. As $ \varepsilon \to +0 $, one obtains
$ \, \left| \left< \zeta, e_i \right> \right| \leqslant C $. This leads directly to (\ref{Eq_A_1}).
\end{proof}

\subsection{Proof of Proposition~2.29}

First, recall the notation
\begin{equation}
\begin{aligned}
& H(x, u, p, \tilde{p}) \:\, \stackrel{\mathrm{def}}{=} \:\, \left< p, f(x, u) \right> \: + \: \tilde{p} \, g(x, u), \\
& \mathcal{H}(x, p, \tilde{p}) \:\, \stackrel{\mathrm{def}}{=} \:\, \min_{u' \, \in \, U} \, H(x, u', p, \tilde{p}), \\
& U^*(x, p, \tilde{p}) \:\, \stackrel{\mathrm{def}}{=} \:\, \mathrm{Arg} \min_{u' \, \in \, U} \, H(x, u', p, \tilde{p}) \\
& \forall \: (x, u, p, \tilde{p}) \: \in \: G \times U \times \mathbb{R}^n \times \mathbb{R}
\end{aligned}  \label{Eq_31_A}
\end{equation}
(see (32), (37)), as well as the characteristic Cauchy problems
\begin{equation}
\left\{ \begin{aligned}
& \dot{x^*}(t) \:\: = \:\: \mathrm{D}_p H(x^*(t), \, u^*(t), \, p^*(t), \, \tilde{p}^*) \:\: = \:\:
f(x^*(t), \, u^*(t)), \\
& \dot{p^*}(t) \:\: = \:\: -\mathrm{D}_x H(x^*(t), \, u^*(t), \, p^*(t), \, \tilde{p}^*) \\
& \qquad \:\:
= \:\: -(\mathrm{D}_x f(x^*(t), \, u^*(t)))^{\top} \: p^*(t) \:\, - \:\,
\tilde{p}^* \: \mathrm{D}_x g(x^*(t), \, u^*(t)), \\
& u^*(t) \: \in \: U^*(x^*(t), \, p^*(t), \, \tilde{p}^*), \\
& t \:\, \in \:\, I(x_0, \, u^*(\cdot)) \:\, \stackrel{\mathrm{def}}{=} \:\, \begin{cases}
[0, \, T_{\Omega_c}(x_0, \, u^*(\cdot))], & T_{\Omega_c}(x_0, \, u^*(\cdot)) \: < \: +\infty, \\
[0, +\infty), & T_{\Omega_c}(x_0, \, u^*(\cdot)) \: = \: +\infty,
\end{cases} \\
& x^*(0) \, = \, x_0, \quad p^*(0) \, = \, p_0
\end{aligned} \right.  \label{Eq_38_A}
\end{equation}
(see (39)), where $ \: (x_0, p_0, \tilde{p}^*) \: \in \: G \, \times \, \mathbb{R}^n \, \times \, [0, +\infty) $.

\begin{propaux}  \label{Pro_27_A}
Under the conditions of Theorem~{\rm 2.28,} the Hamiltonian is conserved along any solution of
the characteristic Cauchy problem~{\rm (\ref{Eq_38_A})} with
$ \: (x_0, p_0, \tilde{p}^*) \: \in \: G \, \times \, \mathbb{R}^n \, \times \, [0, +\infty) $.
\end{propaux}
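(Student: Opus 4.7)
The plan is to verify that the function $\varphi(t) \stackrel{\mathrm{def}}{=} \mathcal{H}(x^*(t), p^*(t), \tilde{p}^*)$ is constant on $I(x_0, u^*(\cdot))$ by showing it is absolutely continuous with an almost-everywhere vanishing derivative. Note first that, for any fixed $u \in U$, the map $(x, p) \mapsto H(x, u, p, \tilde{p}^*)$ is continuously differentiable on $G \times \mathbb{R}^n$ under Assumption~2.23, and these derivatives are bounded uniformly in $u$ on any compact set in $(x, p)$. Since $U$ is compact and $\mathcal{H}(\cdot, \cdot, \tilde{p}^*)$ is the minimum of this family of functions, it is locally Lipschitz in $(x, p)$. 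Combined with the absolute continuity of $x^*(\cdot)$ and $p^*(\cdot)$, this yields that $\varphi(\cdot)$ is absolutely continuous, hence differentiable almost everywhere on $I(x_0, u^*(\cdot))$.

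The second step is to compute $\dot\varphi(t)$ at a point $t$ in the interior of $I(x_0, u^*(\cdot))$ where $\varphi$ is differentiable and the Hamiltonian minimum condition $u^*(t) \in U^*(x^*(t), p^*(t), \tilde{p}^*)$ holds (this covers almost every $t$). For any $h \neq 0$ small enough, the minimum definition yields
\begin{equation*}
\varphi(t + h) \: \leqslant \: H(x^*(t + h), \, u^*(t), \, p^*(t + h), \, \tilde{p}^*),
\end{equation*}
whereas $\varphi(t) = H(x^*(t), u^*(t), p^*(t), \tilde{p}^*)$ by the minimum condition at $t$. Subtracting and dividing by $h > 0$, then passing to the limit $h \to +0$, one obtains (using continuous differentiability of $H$ in $(x, p)$ and the characteristic equations)
\begin{equation*}
\dot\varphi(t) \: \leqslant \: \left\langle \mathrm{D}_x H, \dot{x^*}(t) \right\rangle \: + \:
\left\langle \mathrm{D}_p H, \dot{p^*}(t) \right\rangle \: = \: \left\langle \mathrm{D}_x H, \mathrm{D}_p H \right\rangle \: - \:
\left\langle \mathrm{D}_p H, \mathrm{D}_x H \right\rangle \: = \: 0,
\end{equation*}
where the partial derivatives of $H$ are evaluated at $(x^*(t), u^*(t), p^*(t), \tilde{p}^*)$. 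The symmetric limit $h \to -0$ reverses the inequality and produces $\dot\varphi(t) \geqslant 0$, so $\dot\varphi(t) = 0$ at almost every $t$. Absolute continuity then forces $\varphi$ to be constant on $I(x_0, u^*(\cdot))$.

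The subtlety lying behind this outline, and the step I expect to deserve the most care, is that $u^*(\cdot)$ is only measurable, so one cannot differentiate the composite $t \mapsto H(x^*(t), u^*(t), p^*(t), \tilde{p}^*)$ directly via the chain rule. The envelope-type inequality above, together with the autonomy of $H$ in $t$, is precisely what bypasses this obstacle: the ``frozen control'' $u^*(t)$ is used to bound the incremental ratios of $\varphi$ from both sides at a Lebesgue point of $t \mapsto u^*(t)$ that is simultaneously a point of differentiability of $\varphi$ and a point where the minimum condition~(34) holds. Finally, since $I(x_0, u^*(\cdot))$ is a (possibly unbounded) interval and $\varphi$ is absolutely continuous with vanishing almost-everywhere derivative, it is identically equal to $\varphi(0) = \mathcal{H}(x_0, p_0, \tilde{p}^*)$, which is the claimed conservation.
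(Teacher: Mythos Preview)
Your proof is correct and follows essentially the same architecture as the paper's: establish local Lipschitz continuity of $\mathcal{H}(\cdot,\cdot,\tilde{p}^*)$, deduce absolute continuity of $t\mapsto\mathcal{H}(x^*(t),p^*(t),\tilde{p}^*)$, and show its derivative vanishes almost everywhere. The only difference is that the paper dispatches the last step by invoking a general marginal-function differentiation result (Demyanov--Rubinov, Theorem~I.3.4), whereas you carry out the envelope inequality by hand with the frozen control~$u^*(t)$; your version is more self-contained, but you should state explicitly that you also restrict to the full-measure set where $x^*(\cdot)$ and $p^*(\cdot)$ are differentiable and satisfy the characteristic ODEs pointwise, since you use $\dot{x^*}(t)$ and $\dot{p^*}(t)$ in the limit computation.
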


\begin{proof}
By using the representation of directional derivatives of minimum functions (see, e.\,g., \cite[Theorem~I.3.4]{Demyanov1995},
which considers maximum functions, but can be similarly reformulated for minimum functions), one can verify that
\begin{equation}
\frac{\mathrm{d}}{\mathrm{d} t} \: \mathcal{H}(x^*(t), \, p^*(t), \, \tilde{p}^*) \:\, = \:\, 0 \quad
\mbox{for almost all} \:\:\, t \, \in \, I(x_0, \, u^*(\cdot)).  \label{Eq_42}
\end{equation}
Since $ x^*(\cdot) $ and $ p^*(\cdot) $ are absolutely continuous on every compact subset of $ \, I(x_0, \, u^*(\cdot)) \, $
and $ \mathcal{H}(\cdot, \cdot, \cdot) $ is Lipshitz continuous on every compact subset of
$ \, G \times \mathbb{R}^n \times \mathbb{R} \, $ (due to, e.\,g., \cite[Remark~I.3.2]{Demyanov1995}),
the function $ \: I(x_0, \, u^*(\cdot)) \, \ni \, t \: \longmapsto \: \mathcal{H}(x^*(t), \, p^*(t), \, \tilde{p}^*) \: $
is also absolutely continuous on any compact subset of $ \, I(x_0, \, u^*(\cdot)) $. Hence, (\ref{Eq_42}) implies that
the latter function is constant on $ \, I(x_0, \, u^*(\cdot)) $.
\end{proof}

\section{Further details on implementing the curse-of-dimensionality-free approach to CLF approximation and feedback stabilization}

This section accompanies Section~4 and discusses how to practically evaluate the global CLF~$ V(\cdot) $ (or, equivalently,
the Kruzhkov transformed CLF~$ v(\cdot) $) together with the corresponding feedback strategy at any selected state in $ G $,
based on the theoretical results of Section~2 (similar considerations excluding local CLF construction can be applied to
the setting of Section~3). It is also pointed out that our framework can be incorporated in model predictive control schemes for
online stabilization. For convenience, the description is divided into a number of subsections.

\subsection{Construction of a local CLF}

The results of Section~2 were established under the a~priori assumption that a local CLF with desired properties
could be obtained. Analytical construction of local CLFs may in general be a difficult task, if one first considers
the ideal case of unconstrained control inputs and tries to exactly find the corresponding global CLF (by using, e.\,g.,
the results of \cite[\S 9.4]{Isidori1995} or \cite[Chapter~5]{Sepulchre1997}), which can then work locally in case of pointwise
control constraints. Besides, for a number of well-known continuous-time mechanical models, the local or global asymptotic
stabilization properties of certain feedbacks are derived by means of nonstrict Lyapunov functions, such that the right-hand
sides in the related infinitesimal decrease conditions vanish not only at the origin
\cite{Fantoni2002,Malisoff2009,ChoukchouBraham2014,AguilarIbanez2005,AguilarIbanez2011,AguilarIbanez2012}. However,
Definition~2.5 of CLFs and the sufficient conditions of local asymptotic null-controllability used in Remark~2.9
include the strictness.

In this subsection, we propose a linearization based numerical technique for building quadratic local CLFs under some
additional conditions, with the considerations of \cite[Section~3]{ChenAllgower1998} serving as an important motivation.
Those considerations can also be employed for constructing quadratic local CLFs under the same assumptions. Although
the technique presented in the current subsection is less elegant and may be more computationally expensive, it is
more straightforward to use and does not restrict the right-hand sides in the decrease conditions for the resulting
local CLFs necessarily to quadratic functions (in contrast to the approach of \cite[Section~3]{ChenAllgower1998}).

\begin{assumption}  \label{Ass_31}
In addition to Assumption~{\rm 2.1,} suppose that $ \, 0_m \in \mathrm{int} \, U ${\rm ,} $ \, f(0_n, 0_m) = 0_n, \, $
the function~$ f(\cdot, \cdot) $ is continuously differentiable{\rm ,} and the linearization
\begin{equation}
\left\{ \begin{aligned}
& \dot{x}(t) \: = \: A \, x(t) \, + \, B \, u(t), \quad t \geqslant 0, \\
& x(0) \, = \, x_0 \, \in \, G, \\
& u(\cdot) \: \in \: \mathcal{U} \: \stackrel{\mathrm{def}}{=} \: L^{\infty}_{\mathrm{loc}}([0, +\infty), \, U), \\
& A \: \stackrel{\mathrm{def}}{=} \: \mathrm{D}_x f(0_n, 0_m) \: \in \: \mathbb{R}^{n \times n}, \quad
B \: \stackrel{\mathrm{def}}{=} \: \mathrm{D}_u f(0_n, 0_m) \: \in \: \mathbb{R}^{n \times m},
\end{aligned} \right.  \label{Eq_60_A}
\end{equation}
of the system~{\rm (1)} is asymptotically null-controllable.
\end{assumption}

Due to \cite[\S 5.8, Theorem~19]{SontagBook1998}, Assumption~\ref{Ass_31} ensures the existence of a positive definite
matrix~$ P \in \mathbb{R}^{n \times n} $ and a matrix~$ S \in \mathbb{R}^{m \times n} $ such that the functions
\begin{equation}
\breve{V}(x_0) \: = \: \left< P x_0, \, x_0 \right>, \:\:\: \breve{u}(x_0) \: = \: S x_0 \quad
\forall x_0 \in \mathbb{R}^n  \label{Eq_62}
\end{equation}
are respectively a local quadratic CLF and a locally stabilizing linear feedback for (1) in some neighborhood of
the origin~$ 0_n $. One can search for such a neighborhood in the form of a sublevel set of $ \breve{V}(\cdot) $.

In line with \cite[\S 5.8, Proof of Theorem~19]{SontagBook1998}, the control matrix~$ S $ is selected so that
$ \: A + BS \, \in \, \mathbb{R}^{n \times n} \: $ becomes Hurwitz, and $ P $ is a unique positive definite solution of
the matrix equation
\begin{equation}
(A + BS)^{\top} \, P \: + \: P \, (A + BS) \:\, = \:\, -\alpha I_{n \times n}  \label{Eq_63_2}
\end{equation}
with a constant~$ \alpha > 0 $ (one has $ \alpha = 1 $ in that proof, though any $ \alpha > 0 $ would in fact work).

The gradient of $ \breve{V}(\cdot) $ is given by
$$
\mathrm{D} \breve{V}(x_0) \: = \: 2 \, P x_0 \quad \forall x_0 \in \mathbb{R}^n,
$$
and the sublevel sets
\begin{equation}
\breve{\Omega}_r \:\, \stackrel{\mathrm{def}}{=} \:\, \{ x \in \mathbb{R}^n \: \colon \: \breve{V}(x) \, \leqslant \, r \} \quad
\forall r > 0  \label{Eq_64}
\end{equation}
are closed ellipsoidal domains in $ \mathbb{R}^n $. If a level~$ c' > 0 $ satisfies
\begin{equation}
\arraycolsep=1.5pt
\def\arraystretch{1.5}
\begin{array}{c}
\breve{\Omega}_{c'} \, \subset \, G, \\
\left< \mathrm{D} \breve{V}(x), \: f( x, \breve{u}(x)) \right> \:\, < \:\, 0 \quad
\forall \, x \, \in \, \breve{\Omega}_{c'} \setminus \{ 0_n \}, \\
\breve{u}(x) \in U \quad \forall x \in \breve{\Omega}_{c'},
\end{array}  \label{Eq_65}
\end{equation}
then one can take
\begin{equation}
\Omega \: = \: \mathrm{int} \, \breve{\Omega}_{c'}, \quad c \in (0, c'), \quad \Omega_c = \breve{\Omega}_{c}
\label{Eq_66}
\end{equation}
and select a local CLF and a locally stabilizing feedback as
\begin{equation}
V_{\mathrm{loc}}(x) \, \stackrel{\mathrm{def}}{=} \, \breve{V}(x), \:\:\:
u_{\mathrm{loc}}(x) \, \stackrel{\mathrm{def}}{=} \, \breve{u}(x) \quad \forall x \in \Omega.  \label{Eq_67}
\end{equation}
The greater such a level~$ c $, the wider the target set~$ \Omega_c $, and, hence, the easier to numerically solve the exit-time
optimal control problem~(21). This leads to the problem of finding the supremum~$ c_{\sup} $ of all suitable levels,
which can be practically treated by testing the nodes of a grid on the interval $ [0, \tilde{c}] $ with a sufficiently large
right endpoint~$ \tilde{c} > 0 $. For each node, an appropriate finite-dimensional optimization problem should be numerically
solved. Finally, it is reasonable to select $ c $ somewhat lower than $ c_{\sup} $, so that
$ \: \left< \mathrm{D} V_{\mathrm{loc}}(x), \: f(x, u_{\mathrm{loc}}(x)) \right> \: $ is not very close to zero at
states~$ x $ near the boundary~$ \, l_c = \partial \Omega_c $.

\begin{remark}  \label{Rem_31_2}  \rm
If the asymptotic null-controllability condition in Assumption~\ref{Ass_31} is replaced with the stronger exact null-controllability
condition
\begin{equation}
\mathrm{rank} \: \left[ B, \, A B, \, A^2 B, \, \ldots, \, A^{n - 1} B \right] \:\, = \:\, n,  \label{Eq_60_A_A}
\end{equation}
then the matrix~$ P \in \mathbb{R}^{n \times n} $ in (\ref{Eq_62}) can be chosen as a unique positive definite solution of the algebraic
Riccati equation
\begin{equation}
A^{\top} P \: + \: P A \: - \: P B R^{-1} B^{\top} P \: + \: Q \:\, = \:\, 0_{n \times n}  \label{Eq_63}
\end{equation}
with arbitrary positive definite matrices $ \, Q \in \mathbb{R}^{n \times n} ${\rm ,} $ R \in \mathbb{R}^{m \times m}, \, $ and
the control matrix in (\ref{Eq_62}) can be taken as $ \, S = -R^{-1} B^{\top} P \, $ (see, e.\,g.,
\cite[Chapter~VII, \S 3.3]{Afanasiev1996}). In this case, (\ref{Eq_62}) gives the value function and optimal feedback strategy for
the infinite-horizon linear-quadratic optimal control problem
\begin{equation}
\left\{ \begin{aligned}
& \dot{x}(t) \:\, = \:\, A \, x(t) \: + \: B \, u(t), \quad t \geqslant 0, \\
& x(0) \, = \, x_0 \, \in \, \mathbb{R}^n, \\
& u(\cdot) \: \in \: L_{\mathrm{loc}}^{\infty}([0, +\infty), \, \mathbb{R}^m), \\
& \int\limits_0^{+\infty} (\left< Q \, x(t), \, x(t) \right> \: + \:
\left< R \, u(t), \, u(t) \right>) \: \mathrm{d} t \:\, \longrightarrow \:\, \min,
\end{aligned} \right.  \label{Eq_61}
\end{equation}
with unconstrained control inputs.  \qed
\end{remark}

\subsection{Using the characteristics based representation of the value function}
\label{subsect_char_based_method}

This subsection describes the use of the characteristics based representation of $ v(\cdot) $ in $ \mathcal{D}_0 \setminus \Omega_c $
given by Theorem~2.28, whose formulation is repeated here for convenience.

\begin{theoremaux}
Let Assumptions~{\rm 2.1, 2.7, 2.15, 2.21} and {\rm 2.23} hold. For any initial
state~$ \, x_0 \, \in \, \mathcal{D}_0 \setminus \Omega_c, \, $ the Kruzhkov transformed value~$ v(x_0) $
defined by {\rm (20), (21), (36)} is the minimum of
\begin{equation}
1 \:\: - \:\: \exp \left\{ -\int\limits_0^{T_{\Omega_c}(x_0, \, u^*(\cdot))} g(x^*(t), \, u^*(t)) \: \mathrm{d} t \:\, - \:\, c \right\}
\label{Eq_37_A}
\end{equation}
over the solutions of the characteristic Cauchy problems~{\rm (\ref{Eq_38_A})} for all extended initial adjoint vectors
\begin{equation}
(p_0, \tilde{p}^*) \:\, \in \:\, \{ (p, \tilde{p}) \: \colon \: p \in \mathbb{R}^n, \:\: \tilde{p} \in \{ 0, 1 \} \}.
\label{Eq_39_A}
\end{equation}
Moreover{\rm ,} the same value is obtained when minimizing over the bounded set
\begin{equation}
(p_0, \tilde{p}^*) \:\, \in \:\, \{ (p, \tilde{p}) \: \in \: \mathbb{R}^n \times \mathbb{R} \: \colon \:
\| (p, \tilde{p}) \| \: = \: 1, \:\:\, \tilde{p} \geqslant 0 \},  \label{Eq_40_A}
\end{equation}
or even over its subset
\begin{equation}
(p_0, \tilde{p}^*) \:\, \in \:\, \{ (p, \tilde{p}) \: \in \: \mathbb{R}^n \times \mathbb{R} \: \colon \:
\| (p, \tilde{p}) \| \: = \: 1, \:\:\, \tilde{p} \geqslant 0, \:\:\,
\mathcal{H}(x_0, p, \tilde{p}) \: = \: 0 \}.  \label{Eq_41_A}
\end{equation}
\end{theoremaux}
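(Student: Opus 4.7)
The plan is to establish the characterization by combining the existence result of Theorem~2.21 with the necessary optimality conditions of Theorem~2.22, and then to exploit the positive homogeneity of the Hamiltonian in $(p, \tilde{p})$. First, I would fix $x_0 \in \mathcal{D}_0 \setminus \Omega_c$ and invoke Theorem~2.21 to obtain an optimal control $u^*(\cdot) \in \mathcal{U}$ with finite exit time $T^* = T_{\Omega_c}(x_0, u^*(\cdot))$. Applying Theorem~2.22 yields a function $p^*(\cdot)$ and a constant $\tilde{p}^* \geqslant 0$ such that the pair $(x^*(\cdot), p^*(\cdot))$ satisfies the characteristic ODEs in~(\ref{Eq_38_A}), the Hamiltonian minimum condition puts $u^*(t)$ in $U^*(x^*(t), p^*(t), \tilde{p}^*)$ a.e., and the nontriviality condition $(p^*(t), \tilde{p}^*) \neq 0_{n+1}$ holds along the trajectory. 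Setting $p_0 = p^*(0)$ exhibits a solution of the Cauchy problem~(\ref{Eq_38_A}) whose value of~(\ref{Eq_37_A}) equals $1 - e^{-V(x_0)} = v(x_0)$, giving the $\leqslant v(x_0)$ inequality for the infimum.

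For the reverse inequality, I would observe that any admissible characteristic solution of~(\ref{Eq_38_A}) produces a measurable control $u^*(\cdot) \in \mathcal{U}$ via any measurable selection from $U^*(x^*(\cdot), p^*(\cdot), \tilde{p}^*)$. If $T_{\Omega_c}(x_0, u^*(\cdot)) < +\infty$, then this control is a feasible candidate in the optimization defining $V(x_0)$ in~(21), so its cost is at least $V(x_0)$, and thus the corresponding value of~(\ref{Eq_37_A}) is at least $v(x_0)$. If $T_{\Omega_c}(x_0, u^*(\cdot)) = +\infty$, then by Item~4 of Assumption~2.15 the integral in~(\ref{Eq_37_A}) diverges and the expression equals $1 \geqslant v(x_0)$. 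Combining these two directions establishes the first assertion with the extended adjoint set~(\ref{Eq_39_A}), where normalizing $\tilde{p}^* \in \{0, 1\}$ is legitimate by the positive $1$-homogeneity of $H$ in $(p, \tilde{p})$ (Remark~2.23).

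The reduction to the bounded set~(\ref{Eq_40_A}) follows from the same homogeneity: multiplying $(p_0, \tilde{p}^*)$ by any $\lambda > 0$ leaves $U^*(x, \lambda p, \lambda \tilde{p})$ unchanged and hence leaves the state component $x^*(\cdot)$ and the cost~(\ref{Eq_37_A}) unaltered. In particular, each $(p_0, \tilde{p}^*)$ from~(\ref{Eq_39_A}) with $(p_0, \tilde{p}^*) \neq 0_{n+1}$ can be rescaled to have unit Euclidean norm while preserving $\tilde{p}^* \geqslant 0$; the degenerate case $(p_0, \tilde{p}^*) = 0_{n+1}$ is ruled out by the nontriviality assertion in Theorem~2.22 along the optimizing trajectory. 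Finally, the further reduction to~(\ref{Eq_41_A}) uses the Hamiltonian vanishing condition~(34) of Theorem~2.22 evaluated at $t = 0$, which forces $\mathcal{H}(x_0, p_0, \tilde{p}^*) = 0$ along the minimizer; since this normalization can always be arranged for the optimizing extremal, restricting to~(\ref{Eq_41_A}) still recovers $v(x_0)$.

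The main subtlety I expect lies not in any hard estimate but in handling the abnormal case $\tilde{p}^* = 0$ correctly and in arguing that $U^*(\cdot)$ admits a measurable selection whose associated trajectory agrees with the characteristic $x^*(\cdot)$; this is where one must be careful that the set-valued map in~(38) is nonempty and measurable, and that along the optimal pair the selection coincides with the $u^*(\cdot)$ delivered by Theorem~2.22. Once this is settled, the rest of the argument is a direct bookkeeping exercise using homogeneity and the Hamiltonian vanishing identity.
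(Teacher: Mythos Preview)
Your proposal is correct and follows essentially the same route as the paper: combine the existence result (Theorem~2.24), Pontryagin's principle (Theorem~2.25), and Remark~2.26 to see that the optimal process is among the Cauchy-problem characteristics, while every Cauchy-problem characteristic yields an admissible control (so the minimum over this wider field still equals $v(x_0)$); then use positive $1$-homogeneity of $H$ in $(p,\tilde p)$ together with the Hamiltonian vanishing condition~(35) to pass from~(\ref{Eq_39_A}) to~(\ref{Eq_40_A}) and~(\ref{Eq_41_A}). Note that your cross-references are shifted (what you call Theorem~2.21, Theorem~2.22, Remark~2.23, and equation~(34) are actually Theorem~2.24, Theorem~2.25, Remark~2.26, and equation~(35) in the paper's numbering), and your explicit handling of the case $T_{\Omega_c}=+\infty$ via Item~4 of Assumption~2.15 is a detail the paper leaves implicit.
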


In order to ensure the uniqueness of the solutions of the characteristic Cauchy problems~(\ref{Eq_38_A}) in the normal
case~$ \tilde{p}^* > 0 $, the following conditions are imposed.

\begin{assumption}  \label{Ass_28}
The extremal control map~$ U^*(x, p, \tilde{p}) $ {\rm (}see {\rm (\ref{Eq_31_A}))} is singleton for $ \tilde{p} > 0 ${\rm ,}
and the corresponding function defined on $ \: G \, \times \, \mathbb{R}^n \, \times \, (0, +\infty) \: $ and taking values in $ U $
is locally Lipschitz continuous.
\end{assumption}

This often holds if, for instance, the running cost is regularized by adding a suitable control-dependent term.
In the abnormal case~$ \tilde{p}^* = 0 $, there typically exist states and adjoint vectors for which the extremal
control map is nonsingleton, regardless of the running cost. However, one can expect that abnormal characteristics
would rarely be optimal, since they do not take the running cost into account. If the extremal control map on
a characteristic trajectory becomes nonsingleton at some time during computations, one can select any extremal
control action at this time. Note also that, for some particular classes of optimal control problems, an additional
analysis via Pontryagin's principle may allow characteristics based methods to be modified so that singular regimes are
explicitly handled and the nonuniqueness in the choice of extremal control actions is avoided (see
\cite[Examples~3.14 and 3.15]{YegorovDower2017} related to the case of a fixed finite horizon).

\subsubsection{Practical specification of exit times and the domain of asymptotic null-controlla\-bility}

Next, recall the representation of the domain of asymptotic null-controllability~$ \mathcal{D}_0 $ in Theorem~2.27:
\begin{equation}
\mathcal{D}_0 \:\: = \:\: \{ x_0 \in G \: \colon \: V(x_0) \, < \, +\infty \} \:\: = \:\:
\{ x_0 \in G \: \colon \: v(x_0) \, < \, 1 \}.  \label{Eq_43_2}
\end{equation}
Before evaluating the CLF at a particular state~$ x_0 \in G $, one usually does not know if $ x_0 \in \mathcal{D}_0 $ or not.
Even if the initial state lies in $ \mathcal{D}_0 $, there may still exist extended initial adjoint vectors~$ (p_0, \tilde{p}^*) $
that generate characteristic trajectories with infinite exit time and with the cost~(\ref{Eq_37_A}) equal to $ 1 $.

Let us provide a practical rule to determine costs and exit (terminal) times during numerical integration of
the characteristic Cauchy problems~(\ref{Eq_38_A}). A~priori, it is reasonable to fix a sufficiently large finite
upper bound~$ T_{\max} > 0 $ for exit times, even though this is in general a heuristic choice. Take also
a sufficiently small parameter~$ \varepsilon \in (0, 1) $. It is proposed to stop integrating the characteristic system
when at least one of the following conditions starts to hold:
\begin{list}{\rm \arabic{count})}%
{\usecounter{count}}
\setlength\itemsep{0em}
\item  the a~priori selected upper bound for terminal times is reached, i.\,e., $ t = T_{\max} $;
\item  the target set is entered, i.\,e., $ x^*(t) \in \Omega_c $;
\item  the accumulated cost becomes very close to the maximum value~$ 1 $, i.\,e.,
$$
1 \:\: - \:\: \exp \left\{ -\int\limits_0^t g(x^*(s), \, u^*(s)) \: ds \:\, - \:\, c \right\} \:\: \geqslant \:\:
1 - \varepsilon.
$$
\end{list}
These three cases accordingly define practical exit times. In Case~2, the cost is specified as (\ref{Eq_37_A}),
while, in Cases~1 and 3, it is set as $ 1 - \varepsilon $.

Let $ \: \hat{v} \colon \: G \setminus \Omega_c \, \to \, [0, \, 1 - \varepsilon] \: $ be the approximation of
$ v(\cdot) $ in $ G \setminus \Omega_c $ obtained by incorporating the aforementioned arguments in a numerical
method building on Theorem~2.28. Select one more parameter~$ \varepsilon_1 \in (0, 1) $, which is
sufficiently small but not less than $ \varepsilon $. Then the domain~$ \mathcal{D}_0 $ can be approximated by
its inner estimate
\begin{equation}
\hat{\mathcal{D}}_0 \:\: \stackrel{\mathrm{def}}{=} \:\: \Omega_c \:\, \cup \:\,
\{ x_0 \, \in \, G \setminus \Omega_c \: \colon \: \hat{v}(x_0) \, < \, 1 - \varepsilon_1 \}.
\label{Eq_43}
\end{equation}

\subsubsection{An auxiliary problem for finding an appropriate initial guess for the main optimization problem}

Let us refer to the finite-dimensional optimization problem formulated in Theorem~2.28 as the main problem.
The cost function in this problem may be essentially multi-extremal for some initial states~$ x_0 \in G $.
There may in particular exist a relatively large subset of (\ref{Eq_40_A}) consisting of the extended initial
adjoint vectors~$ (p_0, \tilde{p}^*) $ for which the state trajectories of (\ref{Eq_38_A}) do not reach the target
set~$ \Omega_c $ and the approximate cost defined above equals $ 1 - \varepsilon $. It is hence reasonable first
to introduce an auxiliary problem, whose solution can then serve as an initial guess for an iterative algorithm
applied to the main problem.

Consider the characteristic system rewritten in reverse time~$ \tau $, that is,
\begin{equation}
\left\{ \begin{aligned}
& \frac{\mathrm{d} \hat{x}^*(\tau)}{\mathrm{d} \tau} \:\: = \:\: -\mathrm{D}_p H(\hat{x}^*(\tau), \, \hat{u}^*(\tau), \,
\hat{p}^*(\tau), \, \tilde{p}^*) \:\: = \:\: -f(\hat{x}^*(\tau), \, \hat{u}^*(\tau)), \\
& \frac{\mathrm{d} \hat{p}^*(\tau)}{\mathrm{d} \tau} \:\: = \:\: \mathrm{D}_x H(\hat{x}^*(\tau), \, \hat{u}^*(\tau), \,
\hat{p}^*(\tau), \, \tilde{p}^*) \\
& \qquad\quad \:\,
= \:\: (\mathrm{D}_x f(\hat{x}^*(\tau), \, \hat{u}^*(\tau)))^{\top} \: \hat{p}^*(\tau) \:\, + \:\,
\tilde{p}^* \: \mathrm{D}_x g(\hat{x}^*(\tau), \, \hat{u}^*(\tau)), \\
& \hat{u}^*(\tau) \: \in \: U^*(\hat{x}^*(\tau), \, \hat{p}^*(\tau), \, \tilde{p}^*), \\
& \tau \: \in \: I(\hat{x}^*(0), \, \hat{p}^*(0), \, \tilde{p}^*),
\end{aligned} \right.  \label{Eq_48}
\end{equation}
where $ \: I(\hat{x}^*(0), \, \hat{p}^*(0), \, \tilde{p}^*) \: $ is the maximum extendability time interval with zero
left endpoint during which the related solutions emanating from $ \: (\hat{x}^*(0), \, \hat{p}^*(0)) \: $ satisfy
$ \: \hat{x}^*(\tau) \, \in \, G, \: $ and take also
\begin{equation}
\hat{x}^*(0) \, \in \, \Omega_{c_1}, \quad
\hat{p}^*(0) \: \in \: \mathrm{N}(\hat{x}^*(0); \, \Omega_{c_1})  \label{Eq_49}
\end{equation}
for some parameter~$ c_1 \in (0, c] $. The conditions~(\ref{Eq_49}) come from Pontryagin's principle for the exit-time
optimal control problem with the target set
\begin{equation}
\Omega_{c_1} \:\, \stackrel{\mathrm{def}}{=} \:\, \left\{ x \in \bar{\Omega} \, \colon \,
V_{\mathrm{loc}}(x) \leqslant c_1 \right\} \:\, \subseteq \:\, \Omega_c  \label{Eq_50}
\end{equation}
instead of $ \Omega_c $. The aim is to get to a selected state~$ x_0 \in G $ as close as possible, which leads to
a shooting problem. The level~$ c_1 $ is allowed to be less than $ c $ in order to make the shooting more robust,
i.\,e., to increase the possibility that the resulting initial guess for the main optimization problem with
$ x_0 \in \mathcal{D}_0 $ generates a forward-time characteristic state trajectory reaching the original target
set~$ \Omega_c $ within the fixed time interval~$ [0, T_{\max}] $. Note also that a similar reduction of a terminal
sublevel set is used in \cite{MichalskaMayne1993} for increasing the robustness of a receding horizon stabilization
algorithm.

Additional properties need to be imposed.

\begin{assumption}  \label{Ass_29}
$ c_1 \in (0, c] $ is a constant. Item~{\rm 4} of Assumption~{\rm 2.7} holds when $ c $ is replaced with
$ c_1 $ and $ C_3 $ remains the same. Item~{\rm 4} of Assumption~{\rm 2.15} holds when $ c $ is replaced with
$ c_1 $ and $ C_6 $ is replaced with $ C'_6 \in (0, C_6] $.
\end{assumption}

Denote also
\begin{equation}
l_{c_1} \:\, \stackrel{\mathrm{def}}{=} \:\, \left\{ x \in \bar{\Omega} \, \colon \,
V_{\mathrm{loc}}(x) = c_1 \right\} \:\, = \:\, \partial \Omega_{c_1}.  \label{Eq_51}
\end{equation}

\begin{assumption}  \label{Ass_30}
The local CLF~$ V_{\mathrm{loc}}(\cdot) $ is continuously differentiable in $ \Omega ${\rm ,} and{\rm ,} for
any direction~$ \xi \in \mathbb{R}^n $ with $ \| \xi \| = 1 ${\rm ,} there exists a unique state
\begin{equation}
x^*_{\mathrm{term}}(\xi) \:\: \in \:\: l_{c_1} \:\, \cap \:\, \{ \lambda \xi \: \colon \: \lambda > 0 \}.  \label{Eq_51_2}
\end{equation}
\end{assumption}

\begin{remark}  \label{Rem_31}  \rm
Let Assumptions~2.1, 2.7 and \ref{Ass_29}  hold. It is not difficult to verify that
Assumption~\ref{Ass_30} also holds if, for example, the following conditions are fulfilled:
\begin{itemize}
\setlength\itemsep{0em}
\item  $ \Omega $ is convex;
\item  $ V_{\mathrm{loc}}(\cdot) $ is continuously differentiable and convex in $ \Omega $ (the convexity of
$ V_{\mathrm{loc}}(\cdot) $ implies the convexity of its sublevel sets, such as $ \Omega_c $ and
$ \Omega_{c_1} $);
\item  $ V_{\mathrm{loc}}(\cdot) $ satisfies
$$
\left< x, \, \mathrm{D} V_{\mathrm{loc}}(x) \right> \: > \: 0 \quad \forall x \in l_{c_1}
$$
(this holds in particular for quadratic local CLFs).
\end{itemize}
\qed
\end{remark}

In line with Assumptions~\ref{Ass_29},~\ref{Ass_30} and the Hamiltonian vanishing condition in Pontryagin's
principle, the initial data~(\ref{Eq_49}) for the reverse-time characteristic system~(\ref{Eq_48}) is taken as
\begin{equation}
\arraycolsep=1.5pt
\def\arraystretch{1.5}
\begin{array}{c}
\hat{x}^*(0) \: = \: x^*_{\mathrm{term}}(\xi) \: \in \: l_{c_1}, \quad \xi \in \mathbb{R}^n, \quad \| \xi \| = 1, \\
\hat{p}^*(0) \,\, = \,\, \varkappa \: \mathrm{D} V_{\mathrm{loc}}(\hat{x}^*(0)), \quad \varkappa \geqslant 0, \\
\mathcal{H}(\hat{x}^*(0), \: \hat{p}^*(0), \: \tilde{p}^*) \,\, = \,\, 0, \quad \tilde{p}^* \geqslant 0.
\end{array}  \label{Eq_52}
\end{equation}
By adopting the normalization condition $ \: \| (\hat{p}^*(0), \, \tilde{p}^*) \| \: = \: 1, \: $ one obtains
\begin{equation}
\tilde{p}^* \in [0, 1], \quad
\varkappa \: = \: \frac{\sqrt{1 - (\tilde{p}^*)^2}}{\| \mathrm{D} V_{\mathrm{loc}}(\hat{x}^*(0)) \|} \, ,
\label{Eq_53}
\end{equation}
and $ \: \mathrm{D} V_{\mathrm{loc}}(\hat{x}^*(0)) \, \neq \, 0_n \: $ due to the infinitesimal decrease condition on
the local CLF and $ \: \inf \: \{ g(x, u) \: \colon \: x \in l_{c_1},  \:\, u \in U \} \,\, > \,\, 0 \: $ (recall
Assumptions~2.7, 2.15 and \ref{Ass_29}). The latter properties also yield that, for any
$ \xi \in \mathbb{R}^n $ with $ \| \xi \| = 1 $, the function
\begin{equation}
[0, 1] \: \ni \: \tilde{p} \:\: \longmapsto \:\: \mathcal{H} \left( x^*_{\mathrm{term}}(\xi), \:\:
\frac{\sqrt{1 - \tilde{p}^2}}{\| \mathrm{D} V_{\mathrm{loc}}(x^*_{\mathrm{term}}(\xi)) \|} \,\,
\mathrm{D} V_{\mathrm{loc}}(x^*_{\mathrm{term}}(\xi)), \:\: \tilde{p} \right)  \label{Eq_53_2}
\end{equation}
is negative at $ \tilde{p} = 0 $ and positive at $ \tilde{p} = 1 $, i.\,e., at least one root~$ \tilde{p}^* $ exists
and satisfies
\begin{equation}
0 < \tilde{p}^* < 1  \label{Eq_54}
\end{equation}
(which in particular excludes the abnormal case~$ \tilde{p}^* = 0 $). Possible multiple roots on $ (0, 1) $ can be numerically
bracketed, for example, by using the \texttt{zbrak} routine from \cite[\S 9.1]{PressNR2007}. Each of them is further handled,
and a root with the best shooting performance is finally selected.

The shooting goal is to minimize the lowest deviation
\begin{equation}
\min_{\tau \:\: \in \:\: [0, \, T_{\max}] \:\, \cap \:\, I(\hat{x}^*(0), \,\, \hat{p}^*(0), \,\, \tilde{p}^*)} \:
\left\| \hat{x}^*(\tau) \, - \, x_0 \right\|^2  \label{Eq_55}
\end{equation}
from a state~$ x_0 \in G $ over the solutions of the reverse-time characteristic Cauchy problems~(\ref{Eq_48}), (\ref{Eq_52}),
(\ref{Eq_53}). One therefore arrives at optimizing over the vectors~$ \xi $ on the unit sphere in $ \mathbb{R}^n $, which can be
parametrized as follows:
\begin{equation}
\left\{ \begin{aligned}
& \xi_1 \: = \: \prod_{i = 1}^{n - 1} \sin \, \theta_i, \\
& \xi_j \: = \: \cos \, \theta_{j - 1} \, \prod_{i = j}^{n - 1} \sin \, \theta_i, \quad
j \, = \, \overline{2, \, n - 1}, \\
& \xi_n \: = \: \cos \, \theta_{n - 1}, \\
& 0 \leqslant \theta_1 < 2 \pi, \quad 0 \leqslant \theta_j \leqslant \pi, \quad
j \, = \, \overline{2, \, n - 1}.
\end{aligned} \right.  \label{Eq_56}
\end{equation}
The periodicity in the angles $ \, \theta_i $, $ i = \overline{1, n}, \, $ allows for performing unconstrained optimization
over them. Possible multi-extremality in the auxiliary problem can be treated by applying an iterative optimization method to
a fixed number of random initial guesses generated according to the uniform angles distribution.

If $ \: (\hat{x}^*(\cdot), \, \hat{p}^*(\cdot), \, \tilde{p}^*) \: $ is an optimal shooting characteristic and $ \tau' $ is
a minimizer in (\ref{Eq_55}), then the extended adjoint vector
\begin{equation}
\left( \frac{\hat{p}^*(\tau')}{\tilde{p}^*} \, , \: 1 \right)  \label{Eq_57}
\end{equation}
(normalized so as to make the last coordinate equal to $ 1 $) can specify the initial guess for an iterative optimization method
applied to the main problem for forward-time characteristics. The normalization in (\ref{Eq_57}) allows for optimizing with
respect to $ p_0 \in \mathbb{R}^n $ in the main problem, as well as for representing the gradient~$ \mathrm{D} V(\cdot) $
of the original value function along optimal characteristics in case~$ x_0 \in \mathcal{D}_0 $ ($ V(x_0) < +\infty $) directly via
the adjoint variable (costate).

If $ p_0 $ is an optimal initial costate in the main problem (with $ \tilde{p}^* = 1 $) for an initial state~$ x_0 $,
the optimal control action at this state is chosen from $ \, U^*(x_0, p_0, 1) $.

Even if $ x_0 \in \mathcal{D}_0 $ and the optimal cost~(\ref{Eq_55}) in the auxiliary shooting problem is rather small,
the related initial guess~(\ref{Eq_57}) for the main problem and also some neighboring adjoint vectors might still
lead to a forward-time characteristic state trajectory that emanates from $ x_0 $ but does not reach the target
set~$ \Omega_c $ within the time interval~$ [0, T_{\max}] $. In this situation, the shooting is not accurate enough, but
the following technique for evaluating $ v(x_0) $ may be helpful. Select two sufficiently small positive
parameters~$ \delta_1, \delta_2 $. If the square root of the optimal shooting cost~(\ref{Eq_55}) is less than $ \delta_1 $
and $ \, \hat{x}_0 = \hat{x}^*(\tau') \, $ is the closest state to $ x_0 $ on the optimal shooting characteristic, then
one can solve the main optimization problem (with $ \tilde{p}^* = 1 $) for the initial state~$ \hat{x}_0 $ and use
the corresponding value~$ v(\hat{x}_0) $ and optimal initial costate~$ \hat{p}_0 $ in order to approximate~$ v(x_0) $.
More precisely, if $ \: \| \hat{x}_0 - x_0 \| \, < \, \delta_1 $ and
$ \:  v(\hat{x}_0) \, = \, 1 - e^{-V(\hat{x}_0)} \, < \, 1 - \varepsilon, \: $ consider the first-order estimate
\begin{equation}
\arraycolsep=1.5pt
\def\arraystretch{1.5}
\begin{array}{c}
V_1(x_0) \:\: \stackrel{\mathrm{def}}{=} \:\: V(\hat{x}_0) \: + \: \left< \hat{p}_0, \, x_0 - \hat{x}_0 \right>, \quad
V(\hat{x}_0) \: = \: -\ln \, (1 \, - \, v(\hat{x}_0)), \\
v_1(x_0) \: \stackrel{\mathrm{def}}{=} \: 1 \, - \, e^{-V_1(x_0)},
\end{array}  \label{Eq_58}
\end{equation}
and, if also $ \: 0 \, \leqslant \, v_1(x_0) \, < \, 1 - \varepsilon \: $ and $ \:  |v_1(x_0) \, - \, v(\hat{x}_0)| \: < \: \delta_2, \: $
take $ \: v(x_0) \, \approx \, v_1(x_0) $. If at least one of the tested conditions does not hold, put
$ \: v(x_0) \, \approx \, 1 - \varepsilon $. Moreover, the sought-after control action at the state~$ x_0 $ is selected from
$ \, U^*(x_0, \hat{p}_0, 1) $.

According to Theorem~2.20, $ v(\cdot) $ is differentiable almost everywhere in $ \mathcal{D}_0 $. If it is differentiable at
$ \hat{x}_0 $ and some convex open neighborhood of $ \hat{x}_0 $ containing $ x_0 $ lies in $ \mathcal{D}_0 $, then (\ref{Eq_58}) indeed
gives a first-order approximation of $ v(x_0) $, because the costate~$ \hat{p}_0 $ (for which $ \tilde{p}^* = 1 $)
represents~$ \mathrm{D} V(\hat{x}_0) $.

\subsection{Using direct approximation methods for optimal open-loop control problems}
\label{subsect_direct_methods}

If the field of the solutions of the characteristic Cauchy problems introduced in Theorem~2.28 has a complicated
structure (which takes place for a wide class of high-dimensional nonlinear control systems), one may face significant
difficulties when trying to numerically solve the characteristics based and possibly multi-extremal optimization problem.
In particular, it may be very difficult to achieve a suitable shooting accuracy in the auxiliary problem for the reverse-time
characteristics. In this situation, it is reasonable to compute the value function and optimal control action at any selected
state by using so-called direct approximation methods, which are implemented in special software, such as
GPOPS--II~\cite{PattersonRao2014,GPOPS_II_User_guide}, ICLOCS2~\cite{ICLOCS2}, PSOPT~\cite{Becerra2010,PSOPT_Manual},
BOCOP~\cite{BonnansMartinon2017}, and ACADO~\cite{Houska2011,ACADO_Manual}. These methods involve direct transcriptions of
infinite-dimensional optimal open-loop control problems to finite-dimensional nonlinear programming problems via
discretizations in time applied to state and control variables, as well as to dynamical state equations. Compared to
indirect frameworks building on Pontryagin's principle and the method of characteristics (such as the considerations of
Subsection~\ref{subsect_char_based_method}), the direct approximation techniques are in principle less precise and less
justified from the theoretical point of view, but often more robust with respect to initialization and more straightforward
to use.

Besides, such aforementioned optimal control solvers as GPOPS--II, ICLOCS2, PSOPT, and BOCOP employ direct collocation
methods and can even provide costate estimates via the Karush--Kuhn--Tucker multipliers of the nonlinear programming
problems (see also \cite{Benson2006,Garg2011}). The range of applicability of these estimates is a relevant subject of
future research.

\subsection{Incorporating the approach in model predictive control schemes and using sparse grids}

Section~4 discusses the advantages of our curse-of-dimensionality-free approach to CLF approximation and feedback stabilization.
They allow for incorporating the approach in online stabilization algorithms based on model predictive control (MPC) methodologies
(a comprehensive introduction to the latter as well as various applications can be found in
\cite{MichalskaMayne1993,ChenAllgower1998,Fontes2001,Jadbabaie2005,Wang2009,GrunePannek2017}). These methodologies can be
implemented even if the original deterministic system is perturbed by a stochastic noise with a small intensity (as in
\cite[Example~5.3]{YegorovDower2017} or \cite[Subsections~5.2 and 5.3]{KangWilcox2017}).

For example, such an online stabilization algorithm can be formulated as follows. Fix a finite time interval~$ [0, T] $ and its
partition
$$
0 \, = \, t_0 \, < \, t_1 \, < \, \ldots \, < \, t_{N - 1} \, < \, t_N \, = \, T.
$$
A particular case is the uniform time grid with equal steps $ \: t_{i + 1} - t_i \, = \, T / N $,
$ \, i \, = \, \overline{0, N - 1} $. One can use the piecewise constant time-dependent control strategy that is recomputed at
each instant $ \: t = t_i $, $ \: i \, \in \, \{0, 1, \ldots, N - 1 \}, \: $ as the stabilizing control action evaluated at
the corresponding current state~$ x(t_i) $ via our approach.

In a real-time implementation, the time for computing a new control action is not negligible, especially if the current
state lies outside the sublevel set~$ \Omega_c $ of the local CLF~$ V_{\mathrm{loc}}(\cdot) $ (so that the related
exit-time optimal open-loop control problem has to be numerically solved). Delays in the control switches should then
take place. Since the control actions are evaluated by using the states~$ \: x(t_i) $,
$ \, i \, = \, \overline{0, N - 1}, \: $ the time steps $ \: t_{i + 1} - t_i $,
$ \, i \, = \, \overline{0, N - 1}, \: $ have to be selected greater than a~priori estimates for those delays.

The online control computation can be made faster though less accurate, if one carries out the following scheme that
employs a sparse grid framework (such as those described in \cite{KangWilcox2017,Garcke2012}):
\begin{itemize}
\item  take a bounded region~$ \Pi $ (e.\,g., a parallelepiped) in the state space, so that $ \Omega_c \subset \Pi $, and
generate an a~priori sparse grid~$ \Sigma $ on $ \Pi $;
\item  perform the offline evaluation of the optimal costates or the optimal control actions at the nodes of
$ \Sigma $, and store the resulting offline data in advance (we use the convention that the optimal costate and
the optimal control action at a state~$ x \in \Omega_c $ are defined as the gradient~$ \mathrm{D} V_{\mathrm{loc}}(x) $
of the local CLF and the value~$ u_{\mathrm{loc}}(x) $ of the related locally stabilizing feedback, respectively);
\item  the level~$ c $ for the target set~$ \Omega_c $ should be chosen sufficiently small in order to reduce
the sparse grid interpolation errors caused by the discontinuity of the optimal costate and the optimal feedback
control strategy on the boundary~$ l_c = \partial \Omega_c $;
\item  for a state~$ \: x(t_i) $, $ \: i \, \in \, \{0, 1, \ldots, N - 1 \}, \: $ on an online controlled
trajectory, obtain the next control action (i) as $ u_{\mathrm{loc}}(x(t_i)) $ if $ x(t_i) \in \Omega_c $,
(ii) via the interpolation from the offline data on the sparse grid~$ \Sigma $ if
$ \, x(t_i) \, \in \, \Pi \setminus \Omega_c, \, $ and (iii) by solving the exit-time optimal open-loop
control problem for the initial state~$ x(t_i) $ if $ x(t_i) \notin \Pi $; 
\item  since the control evaluation outside $ \Pi $ requires in principle more time than that inside $ \Pi $,
it may be reasonable to use an adaptive time grid~$ \: t_i $, $ \, i \, = \, \overline{0, N'}, \: $ such that
$$
\arraycolsep=1.5pt
\def\arraystretch{1.5}
\begin{array}{c}
t_0 \, = \, 0, \\
t_{i + 1} \:\, = \:\, \min(t_i + \Delta, \: T) \quad \mbox{if} \:\,\, x(t_i) \notin \Pi, \\
t_{i + 1} \:\, = \:\, \min(t_i + \Delta', \: T) \quad \mbox{if} \:\,\, x(t_i) \in \Pi, \\
i \, = \, \overline{0, N' - 1}, \quad t_{N' - 1} \, < \, T, \quad t_{N'} \, = \, T, \\
\mbox{$ \Delta $ and $ \Delta' $ are constant steps satisfying $ \: 0 \, < \, \Delta' \, < \, \Delta $}, \\
\mbox{the index~$ N' $ as well as the intermediate time grid nodes are therefore adaptive}.
\end{array}
$$
\end{itemize}
Note that the sparse grid extrapolation outside $ \Pi $ is essentially less accurate than the interpolation inside $ \Pi $.
It is hence recommended to obtain the online control actions outside $ \Pi $ in a more computationally expensive way,
e.\,g., as described in Subsections~\ref{subsect_char_based_method},~\ref{subsect_direct_methods}.

The study~\cite{KangWilcox2017} involving sparse grid and MPC techniques has served as a primary motivation for
the proposed scheme. Applications in \cite{KangWilcox2017} included certain optimal control problems with fixed
finite horizons and without control constraints, and the offline sparse grid data was prepared by solving
characteristic boundary value problems numerically. However, as discussed in Subsection~2.3, the latter may sometimes
have multiple solutions, not all of which are optimal, and the framework of Subsection~\ref{subsect_char_based_method}
therefore deals with characteristic Cauchy problems, while the direct approximation methods mentioned in
Subsection~\ref{subsect_direct_methods} do not rely on characteristics.

As was also emphasized in \cite{KangWilcox2017}, the actual sparse grid interpolation errors may be acceptable for some
models if the state space dimension is not too high and if the sought-after functions are smooth enough, although
the theoretical error estimates are rather conservative. Moreover, the interpolation of costates is preferable to that of
feedback control laws if the latter are expected to have a sharper behavior. Nevertheless, the range of applicability of
sparse grids to solving feedback control problems has to be further investigated.

\end{document}